\providecommand{\U}[1]{\protect\rule{.1in}{.1in}}
\newtheorem{theorem}{Theorem}[section]
\newtheorem{corollary}[theorem]{Corollary}
\newtheorem{lemma}[theorem]{Lemma}
\newtheorem{proposition}[theorem]{Proposition}
\newtheorem{definition}{Definition}[section]
\newtheorem{remark}[theorem]{Remark}
\newtheorem{algorithm}[theorem]{Algorithm}
\newtheorem{example}[theorem]{Example}
\theoremstyle{definition}
\theoremstyle{remark}
\numberwithin{equation}{section}
\let\pdfoutput=\undefined\fi
\begin{document}
\pagestyle{myheadings}
\begin{center}
{\Large \textbf{Algebraic conditions and general solution to a system of quaternion tensor equations with applications}}\footnote{This research was supported by
the grants from the National Natural
Science Foundation of China (11971294).
\par

{} First author: Mahmoud Saad Mehany (M.S. Mehany), mahmoud2006@shu.edu.cn.
\par  Second author: Qing-Wen Wang (Q.W. Wang), wqw@t.shu.edu.cn.}

\bigskip

{ \textbf{Mahmoud Saad Mehany$^{a,b}$, Qing-Wen Wang$^{a,c}$}}

{\small
\vspace{0.5cm}

$a.$ Department of Mathematics, Shanghai University, Shanghai 200444, P. R. China}\\
{\small
\vspace{0.1cm}
$c.$ Department of Mathematics, Ain Shams University, Cairo, 11566, A.R. Egypt }\\
{\small
\vspace{0.1cm}
$b.$ Collaborative Innovation Center for the Marine Artificial Intelligence, Shanghai 200444, P. R. China}
\end{center}
\vspace{0.2cm}
\begin{quotation}
\noindent\textbf{Abstract:}
 This paper investigates the necessary and sufficient algebraic
conditions to a constrained system of Sylvester-type quaternion  tensor equations. An explicit formula of the general solution regarding the Moore-Penrose inverses of some block given tensors is obtained.  As an application of a particular case, we establish the solvability conditions and the general solution to a system of Sylvester-type quaternion  tensor equations  involving $\eta$-Hermitian unknowns. An algorithm with a numerical example is proposed to compute the general solution of the main system.

\vspace{2mm}
\noindent\textbf{Keywords:} Tensor, Moore-Penrose inverse, Quaternion, Tensor equation \newline%
\noindent\textbf{2010 AMS Subject Classifications:\ }{\small 15A24, 15A109, 15B33, 15B57 }\newline
\end{quotation}
\section{\textbf{Introduction}}
A tensor is a multidimensional array. Specifically, a tensor is a generalization of a vector or matrix to higher dimensions \cite{Qii,Qi3,QII,QII1,QII2,QII3,4}. Tensors have applications in diverse areas such as machine learning, signal processing, biology, applied mechanics, data mining, pattern recognition, and numerical approaches algorithms for computing some generalized tensor and matrix equations  \cite{6variables,10,c,11,add2,add3,add4,add5,add6,add7,add9,add10,add11,add12}. Hamilton \cite{12} was first presented the quaternion algebra over the real field $\mathbb{R}$
\begin{small}
\begin{align*}
&\mathbb{H}=\{d_{0}+d_{1}\mathbf{i}+d_{2}\mathbf{j}+d_{3}\mathbf{k} : \mathbf{i}^{2}=\mathbf{j}^{2}=\mathbf{k}^{2}=\mathbf{ijk}=-1,\ d_{0}, d_{1}, d_{2}, d_{3}\in \mathbb{R}\}.
\end{align*}
\end{small}
Quaternion algebra is considered a non-commutative division ring. Quaternion, quaternion matrices, and quaternion tensors have applications in signal processing, color image processing, control theory, computer science, statistics and probability, quantum computing \cite{13,15,22,5,Took3,Took4}. Regularization of singular systems, computation of restricted singular value decomposition, and  generalized systems of  Sylvester-type matrix and tensor equations over the complex field $\mathbb{C}$ and the quaternion algebra $\mathbb{H}$ have  been studied by many authors, see e.g. \cite{MAH,Chu10,Chu20,Chu30,a1,a2,a3,a4,a5,a12,a13,a18,a19,a20,a21}. Recently, Zhang and Kang  \cite{add1} propose the generalized modified Hermitian and skew-Hermitian splitting
approach for computing the generalized Lyapunov equation:
\begin{small}
\begin{align*}
AX+XA+\sum_{j=1}^{m}N_{j}XN_{j}^{T}+C=0,
\end{align*}
\end{small}
where $A,$ $ Nj \in \mathbb{C}^{n\times n}$ and $C=C^{T}$ are given matrices, $m \ll n$, $X \in \mathbb{C}^{n\times n}$ is the unknown matrix. However, here, we investigate the necessary and sufficient algebraic conditions for a two-sided four variable Sylvester-type linear tensor equation, and hence apply this equation to find the solvability conditions and the general solution to a constrained seven variables system of  coupled   tensor equations. The solvability conditions and  the general solution of the Sylvester-type tensor equation:
\begin{small}
\begin{align}
\label{1.1}
\begin{array}{rll}
&\mathcal{A}*_{N}\mathcal{X}*_{M}\mathcal{B}+\mathcal{C}*_{N}\mathcal{Y}*_{M}\mathcal{D}=\mathcal{E}
\end{array}
\end{align}
\end{small}
was established in \cite{18}, where $\mathcal{A}$, $\mathcal{B}$, $\mathcal{C}$, $\mathcal{D}$  and $\mathcal{E}$  are given tensors over $\mathbb{H}$. Equation $\eqref{1.1}$ has an application in the discretization  of higher dimension  linear partial differential equations \cite {19}. Here, we give a proper generalization of \eqref{1.1}, namely,
\begin{small}
\begin{equation}
\label{3.1}
\begin{gathered}
\mathcal{A}_{1}*_{N}\mathcal{X}_{1}*_{M}\mathcal{B}_{1}+\mathcal{A}_{2}*_{N}\mathcal{X}_{2}*_{M}\mathcal{B}_{2}\\
+\mathcal{A}_{2}*_{N}(\mathcal{C}_{3}*_{N}\mathcal{X}_{3}*_{M}\mathcal{D}_{3}
+\mathcal{C}_{4}*_{N}\mathcal{W}*_{M}\mathcal{D}_{4})*_{M}\mathcal{B}_{1}=\mathcal{E}_{1}.
\end{gathered}
\end{equation}
\end{small}
 Wang et al. \cite{21} gave a comprehensive discussion to the following system of  coupled two-sided Sylvester-type tensor equations:
\begin{small}
\begin{align}
\label{1.4}
\left\{
\begin{array}{rll}
\begin{gathered}
\mathcal{A}_{1}*_{N}\mathcal{X}*_{M}\mathcal{B}_{1}=\mathcal{E}_{1},\
\mathcal{A}_{2}*_{N}\mathcal{Y}*_{M}\mathcal{B}_{2}=\mathcal{E}_{2},\\
\mathcal{A}_{3}*_{N}\mathcal{Z}=\mathcal{E}_{3},\ \mathcal{Z}*_{M}\mathcal{B}_{3}=\mathcal{E}_{4}\\
\mathcal{A}_{4}*_{N}\mathcal{X}*_{M}\mathcal{B}_{4}+\mathcal{C}_{4}*_{N}\mathcal{Z}*_{M}\mathcal{D}_{4}=\mathcal{P},\\
\mathcal{A}_{5}*_{N}\mathcal{Y}*_{M}\mathcal{B}_{5}+\mathcal{C}_{5}*_{N}\mathcal{Z}*_{M}\mathcal{D}_{5}=\mathcal{Q}.
\end{gathered}
\end{array}
  \right.
\end{align}
\end{small}
They carried out the solvability conditions and the general solution in the Moore-Penrose inverses of some block given tensors. The quaternion system \eqref{1.4} considers as a proper extension of the tensor equation \eqref{1.1}.
We are motivated by wide applications of quaternion, quaternion matrices, quaternion tensors, even quaternion systems of Sylvester-type  tensor equations \cite{Chu11,Chu22,Chu33,TAO1,TAO2,TAO3,Zheng1,Zheng2},  we, in this paper, investigate the algebraic solvability conditions and an  expression of the general solution to the following constrained system of Sylvester-type quaternion  tensor equations:
\begin{small}
\begin{align}
\label{1.5}
\left\{
\begin{array}{rll}
\begin{gathered}
\mathcal{A}_{1}*_{N}\mathcal{X}_{3}*_{M}\mathcal{B}_{1}=\mathcal{E}_{1},\
 \mathcal{A}_{2}*_{N}\mathcal{Y}_{3}*_{M}\mathcal{B}_{2}=\mathcal{E}_{2},\\
\mathcal{A}_{4}*_{N}\mathcal{X}_{1}=\mathcal{E}_{5},\ \mathcal{X}_{2}*_{M}\mathcal{B}_{4}=\mathcal{E}_{6},\\
\mathcal{A}_{5}*_{N}\mathcal{Y}_{1}=\mathcal{E}_{7},\ \mathcal{Y}_{2}*_{M}\mathcal{B}_{5}=\mathcal{E}_{8},\\
\mathcal{A}_{3}*_{N}\mathcal{W}=\mathcal{E}_{3},\ \mathcal{W}*_{M}\mathcal{B}_{3}=\mathcal{E}_{4},\\
\mathcal{A}_{6}*_{N}\mathcal{X}_{1}*_{M}\mathcal{B}_{6}+\mathcal{A}_{7}*_{N}\mathcal{X}_{2}*_{M}\mathcal{B}_{7}\\
+\mathcal{A}_{7}*_{N}(\mathcal{C}_{3}*_{N}\mathcal{X}_{3}*_{M}\mathcal{D}_{3}
+\mathcal{C}_{4}*_{N}\mathcal{W}*_{M}\mathcal{D}_{4})*_{M}\mathcal{B}_{6}=\mathcal{E}_{9},\\
\mathcal{A}_{8}*_{N}\mathcal{Y}_{1}*_{M}\mathcal{B}_{8}+\mathcal{A}_{9}*_{N}\mathcal{Y}_{2}*_{M}\mathcal{B}_{9}\\
+\mathcal{A}_{9}*_{N}(\mathcal{H}_{3}*_{N}\mathcal{Y}_{3}*_{M}\mathcal{J}_{3}
+\mathcal{H}_{4}*_{N}\mathcal{W}*_{M}\mathcal{J}_{4})*_{M}\mathcal{B}_{8}=\mathcal{E}_{10}.
\end{gathered}
\end{array}
  \right.
\end{align}
\end{small}
As a particular case of \eqref{1.5}, we derive the solvability conditions and the general  solution to the system of Sylvester-type  tensor equations:
\begin{small}
\begin{align}
\label{1.6}
\left\{
\begin{array}{rll}
\begin{gathered}
\mathcal{A}_{1}*_{N}\mathcal{X}_{3}*_{M}\mathcal{B}_{1}=\mathcal{E}_{1},\
\mathcal{A}_{2}*_{N}\mathcal{Y}_{3}*_{M}\mathcal{B}_{2}=\mathcal{E}_{2},\\
\mathcal{A}_{3}*_{N}\mathcal{W}=\mathcal{E}_{3},\ \mathcal{W}*_{M}\mathcal{B}_{3}=\mathcal{E}_{4},\\
\mathcal{A}_{6}*_{N}\mathcal{X}_{1}+\mathcal{X}_{2}*_{M}\mathcal{B}_{7}
+\mathcal{C}_{3}*_{N}\mathcal{X}_{3}*_{M}\mathcal{D}_{3}
+\mathcal{C}_{4}*_{N}\mathcal{W}*_{M}\mathcal{D}_{4}=\mathcal{E}_{9},\\
\mathcal{A}_{8}*_{N}\mathcal{Y}_{1}+\mathcal{Y}_{2}*_{M}\mathcal{B}_{9}
+\mathcal{H}_{3}*_{N}\mathcal{Y}_{3}*_{M}\mathcal{J}_{3}
+\mathcal{H}_{4}*_{N}\mathcal{W}*_{M}\mathcal{J}_{4}=\mathcal{E}_{10}.
\end{gathered}
\end{array}
  \right.
\end{align}
\end{small}
Took et al. \cite{Took3} define  an ${\eta}$-Hermitian matrix, for ${\eta} \in \{\mathbf{i},\mathbf{j},\mathbf{k}\}$, a quaternion square matrix $B$ over $\mathbb{H}$ is said to be an ${\eta}$-Hermitian matrix if $B^{\eta^{*}} = B$, where $B^{\eta^{*}} = -{\eta}B^{*}{\eta}$. ${\eta}$-Hermitian matrices have applications in statistical signal processing and linear modeling \cite{Took3,Took4,Took1,Took2}. As a direct implementation of the particular case \eqref{1.6}, this study investigates the necessary and sufficient algebriac conditions for the existence of a general solution to the following  system of Sylvester-type  tensor equations:
\begin{small}
\begin{align}
\label{1.7a}
\left\{
\begin{array}{rll}
\begin{gathered}
\mathcal{A}_{1}*_{N}\mathcal{X}_{3}*_{M}\mathcal{A}_{1}^{\eta^{*}}=\mathcal{E}_{1},\
\mathcal{A}_{2}*_{N}\mathcal{Y}_{3}*_{M}\mathcal{A}_{2}^{\eta^{*}}=\mathcal{E}_{2},\
\mathcal{A}_{3}*_{N}\mathcal{W}=\mathcal{E}_{3},\\
\mathcal{A}_{6}*_{N}\mathcal{X}_{1}+(\mathcal{A}_{6}*_{N}\mathcal{X}_{1})^{\eta^{*}}
+\mathcal{C}_{3}*_{N}\mathcal{X}_{3}*_{M}\mathcal{C}_{3}^{\eta^{*}}
+\mathcal{C}_{4}*_{N}\mathcal{W}*_{M}\mathcal{C}_{4}^{\eta^{*}}=\mathcal{E}_{9},\\
\mathcal{A}_{8}*_{N}\mathcal{Y}_{1}+(\mathcal{A}_{8}*_{N}\mathcal{Y}_{1})^{\eta^{*}}
+\mathcal{H}_{3}*_{N}\mathcal{Y}_{3}*_{M}\mathcal{H}_{3}^{\eta^{*}}
+\mathcal{H}_{4}*_{N}\mathcal{W}*_{M}\mathcal{H}_{4}^{\eta^{*}}=\mathcal{E}_{10}
\end{gathered}
\end{array}
  \right.
\end{align}
\end{small}
where $\mathcal{X}_{3}$, $\mathcal{Y}_{3}$, and $\mathcal{W}$ are $\eta$-Hermitian unknowns and $\mathcal{E}_{i}=\mathcal{E}_{i}^{\eta^{*}}$  $i \in \{1,2,9,10\}$.

 This paper is organized as follows. In Section 2, we recall some basic definitions and well-known results. Section  3  continues the algebraic solvability  conditions and the general solution to \eqref{1.5}. In Section  4,  we investigate the particular case \eqref{1.6}. Consequently, we carry out the solvability conditions and the general solution to \eqref{1.7a}. In Section 5, we summarized the results in giving the main conclusions.
\section{\textbf{Preliminaries}}
Throughout this paper, consider all tensors to be quaternion tensors. For convenience, we utilize the symbol $I(M)$ instead of $I_{1}\times I_{2}\times...\times I_{M}$, for some positive integers $I_{1},$ ..., $I_{M}$, $M$. A tensor $\mathcal{P}\in \mathbb{H}^{I_{1}\times I_{2}\times...\times I_{M}\times J_{1}\times J_{2}\times...\times J_{N}}$ can be written in the more straightforward form $\mathcal{P}\in \mathbb{H}^{I(M)\times J(N)}$. A tensor $\mathcal{P}\in \mathbb{H}^{I(N)\times J(N)}$ is called an even-order tensor. An even-order tensor $\mathcal{Q}\in \mathbb{H}^{I(N)\times I(N)}$ is called an even-order square tensor. For a fixed element $q \in \mathbb{H}$, the symbol $\overline{q}$ stands for the conjugate of $q$. A quaternion tensor $\mathcal{P}^{*}$= $(\overline{p}_{j_{1}..j_{M}i_{1}..i_{N}}) \in \mathbb{H}^{J(M)\times I(N)}$ calls the conjugate transpose of the tensor $\mathcal{P}$= $(p_{i_{1}..i_{N}j_{1}..j_{M}}) \in \mathbb{H}^{I(N)\times J(M)}$. If $\mathcal{P}=\mathcal{P}^{*}$, then $\mathcal{P}$  is called a quaternion Hermitian tensor.
\begin{definition}\ \cite{22}
Let $\mathcal{A} \in \mathbb{H}^{I(N)\times J(N)}$ and $\mathcal{B} \in \mathbb{H}^{J(N)\times K(M)}$, then the Einstein product of  $\mathcal{A}$ and $\mathcal{B}$ is denoted by $\mathcal{A}*_{N}\mathcal{B} \in \mathbb{H}^{I(N)\times K(M)}$, where \[(\mathcal{A}*_{N}\mathcal{B})_{i_{1}..i_{N}k_{1}..k_{M}}=
	\sum_{j_{1}...j_{N}}a_{i_{1}...i_{N}j_{1}...j_{N}}b_{j_{1}...j_{N}k_{1}...k_{M}}.\] Moreover, $*_{N}$ is associative over the set of all tensors with qualified order.
\end{definition}
\begin{definition}\cite{23}
An even order square tensor $\mathcal{D} = (d_{i_{1}...i_{N}i_{1}...i_{N}})\in \mathbb{H}^{I(N)\times I(N)}$ is called a diagonal tensor if  $d_{i_{1}...i_{N}i_{1}...i_{N}}\neq 0$ and all its entries are zero. A diagonal tensor is said to be a unit tensor if $d_{i_{1}...i_{N}i_{1}...i_{N}}= 1$, which denotes by $\mathcal{I}$.
\end{definition}
\begin{definition}\cite{23}
Let $\mathcal{A} = (a_{i_{1}...i_{N}j_{1}...j_{M}})\in \mathbb{H}^{I(N)\times J(M)}$, $\mathcal{B} = (b_{i_{1}...i_{N}k_{1}...k_{M}})\in \mathbb{H}^{I(N)\times K(M)}$, then the  ``row block tenso'' of $\mathcal{A}$ and $\mathcal{B}$ is denoted by
\begin{small}
\begin{align}
\label{2.1}
\begin{array}{rll}
&\begin{pmatrix}\mathcal{A}&\mathcal{B}\end{pmatrix} \in \mathbb{H}^{I(N)\times L(M)},
\end{array}
\end{align}
\end{small}
where $L_{s}=J_{s}+K_{s}$, $s=1, ...,M$ define as
\begin{small}
\begin{align*}
&\begin{pmatrix}\mathcal{A}&\mathcal{B}\end{pmatrix}_{i_{1}...i_{N}l_{1}...l_{M}}=
\left\{
\begin{array}{rll}
&a_{i_{1}...i_{N}l_{1}...l_{M}},\ if\ i_{1}...i_{N} \in [I_{1}]... [I_{N}],\ l_{1}...l_{M} \in [J_{1}]... [J_{M}],\\
&b_{i_{1}...i_{N}l_{1}...l_{M}},\ if\ i_{1}...i_{N} \in [I_{1}]... [I_{N}],\ l_{1}...l_{M} \in \Gamma_{1}... \Gamma_{M},\\
&0,\ \ \ \ \ otherwise,
\end{array}
  \right.
\end{align*}
\end{small}
where $\Gamma_{s}=\{J_{s}+1, ..., J_{s}+K_{s}\}$, $s=1, ...,M$. For a given tensors $\mathcal{C} = (c_{j_{1}...j_{M}i_{1}...i_{N}})\in \mathbb{H}^{J(M)\times I(N)}$, $\mathcal{D} = (d_{k_{1}...k_{M}i_{1}...i_{N}})\in \mathbb{H}^{K(M)\times I(N)}$. The``column block tensor'' of $\mathcal{C}$ and $\mathcal{D}$ is denoted by
\begin{small}
\begin{align}
\label{2.2}
\begin{array}{rll}
&\begin{pmatrix}\mathcal{C}\\ \mathcal{D}\end{pmatrix} \in \mathbb{H}^{L(M)\times I(N)},
\end{array}
\end{align}
\end{small}
where $L_{s}=J_{s}+K_{s}$, $s=1, ...,M$ define as
\begin{small}
\begin{align*}
&\begin{pmatrix}\mathcal{C}\\ \mathcal{D}\end{pmatrix}_{l_{1}...l_{M}i_{1}...i_{N}}=
\left\{
\begin{array}{rll}
&c_{l_{1}...l_{M}i_{1}...i_{N}},\ if\ l_{1}...l_{M} \in [J_{1}]...[J_{M}],\ i_{1}...i_{N} \in [I_{1}]... [I_{N}],\\
&d_{l_{1}...l_{M}i_{1}...i_{N}},\ if\ l_{1}...l_{M} \in \Gamma_{1}... \Gamma_{M},\ i_{1}...i_{N} \in [I_{1}]...[I_{N}],\\
&0,\ \ \ \ \ otherwise,
\end{array}
  \right.
\end{align*}
\end{small}
where $\Gamma_{s}=\{J_{s}+1, ..., J_{s}+K_{s}\}$, $s=1, ...,M$.
\end{definition}
\begin{proposition}\cite{23} Let $\mathcal{A} \in \mathbb{H}^{I(P)\times K(N)}$ and $\mathcal{B} \in \mathbb{H}^{K(N)\times J(M)}$, then
\begin{small}
\begin{enumerate}
\item $(\mathcal{A}*_{N}\mathcal{B})^{*}=\mathcal{B}^{*}*_{N}\mathcal{A}^{*}$,
\item $\mathcal{I_{N}}*_{N}\mathcal{B}=\mathcal{B}$,\ $\mathcal{B}*_{M}\mathcal{I}_{M}=\mathcal{B}$,
where $\mathcal{I_{N}} \in \mathbb{H}^{K(N)\times K(N)}$ and $\mathcal{I_{M}} \in \mathbb{H}^{J(M)\times J(M)}$ are unit tensors.
\end{enumerate}
\end{small}
\end{proposition}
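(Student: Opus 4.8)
The plan is to verify both identities directly from the entrywise definition of the Einstein product, using only quaternion arithmetic (crucially that conjugation is an anti-automorphism, $\overline{pq}=\overline{q}\,\overline{p}$) together with the definitions of the conjugate-transpose tensor and of the unit tensor. No auxiliary machinery is needed; everything reduces to comparing a generic entry of the left-hand side with the corresponding entry of the right-hand side.

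For part (1), fix a generic index tuple $(k_{1},\dots,k_{M},i_{1},\dots,i_{P})$ and evaluate the corresponding entry of $(\mathcal{A}*_{N}\mathcal{B})^{*}$. By the definition of the conjugate transpose this equals the conjugate of the $(i_{1},\dots,i_{P},k_{1},\dots,k_{M})$ entry of $\mathcal{A}*_{N}\mathcal{B}$, which by the Einstein-product formula is $\overline{\sum_{j_{1},\dots,j_{N}} a_{i_{1}\dots i_{P}j_{1}\dots j_{N}}\,b_{j_{1}\dots j_{N}k_{1}\dots k_{M}}}$. Pushing the conjugation through the finite sum and then reversing each two-factor scalar product via $\overline{pq}=\overline{q}\,\overline{p}$ turns this into $\sum_{j_{1},\dots,j_{N}} \overline{b_{j_{1}\dots j_{N}k_{1}\dots k_{M}}}\;\overline{a_{i_{1}\dots i_{P}j_{1}\dots j_{N}}}$. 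Recognising $\overline{b_{j_{1}\dots j_{N}k_{1}\dots k_{M}}}=(\mathcal{B}^{*})_{k_{1}\dots k_{M}j_{1}\dots j_{N}}$ and $\overline{a_{i_{1}\dots i_{P}j_{1}\dots j_{N}}}=(\mathcal{A}^{*})_{j_{1}\dots j_{N}i_{1}\dots i_{P}}$, the expression is exactly the $(k_{1},\dots,k_{M},i_{1},\dots,i_{P})$ entry of $\mathcal{B}^{*}*_{N}\mathcal{A}^{*}$. Since the tuple was arbitrary and the two tensors have the same order $J(M)\times I(P)$, they coincide.

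For part (2), argue entrywise once more. From the definition of the unit tensor, $(\mathcal{I}_{N})_{i_{1}\dots i_{N}j_{1}\dots j_{N}}=\prod_{s=1}^{N}\delta_{i_{s}j_{s}}$, so in the Einstein sum $\sum_{j_{1},\dots,j_{N}}(\mathcal{I}_{N})_{i_{1}\dots i_{N}j_{1}\dots j_{N}}\,b_{j_{1}\dots j_{N}k_{1}\dots k_{M}}$ only the single summand with $j_{s}=i_{s}$ for every $s$ survives, and it equals $b_{i_{1}\dots i_{N}k_{1}\dots k_{M}}$; hence $\mathcal{I}_{N}*_{N}\mathcal{B}=\mathcal{B}$. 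The identity $\mathcal{B}*_{M}\mathcal{I}_{M}=\mathcal{B}$ follows either by the same direct computation or, more quickly, from part (1): since the unit tensor is Hermitian, $(\mathcal{B}*_{M}\mathcal{I}_{M})^{*}=\mathcal{I}_{M}^{*}*_{M}\mathcal{B}^{*}=\mathcal{I}_{M}*_{M}\mathcal{B}^{*}=\mathcal{B}^{*}$, and then conjugate-transposing back. The only point demanding care in the whole argument is the noncommutativity of $\mathbb{H}$: conjugating a product of two scalar entries reverses their order, and this is precisely what forces the ordering $\mathcal{B}^{*}$ before $\mathcal{A}^{*}$ in part (1) — the sole place where the proof uses the genuinely quaternionic (rather than real or complex) structure.
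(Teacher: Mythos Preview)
Your entrywise verification is correct and complete; the use of the anti-automorphism $\overline{pq}=\overline{q}\,\overline{p}$ is exactly the key point, and your handling of the unit tensor via the Kronecker delta is standard. The paper does not supply its own proof of this proposition---it is quoted from reference~\cite{23}---so there is nothing further to compare against.
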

\begin{proposition}\cite{23} Consider the tensors $\begin{pmatrix}\mathcal{A}& \mathcal{B}\end{pmatrix}$ and $\begin{pmatrix}\mathcal{C}\\ \mathcal{D}\end{pmatrix}$ be given in \eqref{2.1} and \eqref{2.2}, respectively. For a given  tensor $\mathcal{G} \in \mathbb{H}^{I(N)\times I(N)}$, we have that
\begin{small}
\begin{enumerate}
  \item $\mathcal{G}*_{N}\begin{pmatrix}\mathcal{A}& \mathcal{B}\end{pmatrix}=
\begin{pmatrix}\mathcal{G}*_{N}\mathcal{A}& \mathcal{G}*_{N}\mathcal{B}\end{pmatrix}\in \mathbb{H}^{I(N)\times L(M)},$
  \item $\begin{pmatrix}\mathcal{C}\\ \mathcal{D}\end{pmatrix}*_{N}\mathcal{G}=
\begin{pmatrix}\mathcal{C}*_{N}\mathcal{G}\\ \mathcal{D}*_{N}\mathcal{G}\end{pmatrix}\in \mathbb{H}^{L(M)\times I(N)},$
  \item $\begin{pmatrix}\mathcal{A}& \mathcal{B}\end{pmatrix}*_{M}\begin{pmatrix}\mathcal{C}\\ \mathcal{D}\end{pmatrix}=
\mathcal{A}*_{M}\mathcal{C}+\mathcal{B}*_{M}\mathcal{D}\in \mathbb{H}^{I(N)\times I(N)}.$
\end{enumerate}
\end{small}
\end{proposition}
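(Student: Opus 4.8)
The plan is to verify each of the three identities entrywise, invoking only the definition of the Einstein product $*_{N}$ and the piecewise definitions of the row and column block tensors. The organizing observation is a trichotomy for a ``long'' multi-index $l_{1}\ldots l_{M}$ of the block dimension $L(M)$ with $L_{s}=J_{s}+K_{s}$: either (i) $l_{s}\in[J_{s}]$ for every $s$; or (ii) $l_{s}\in\Gamma_{s}=\{J_{s}+1,\ldots,J_{s}+K_{s}\}$ for every $s$; or (iii) the index is genuinely mixed, in which case the block-tensor entry is $0$ by definition. This is the only bookkeeping device needed, together with the reindexing $l_{s}\mapsto l_{s}-J_{s}$ that identifies $\Gamma_{s}$ with $[K_{s}]$.

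For item (1), fix $i_{1}\ldots i_{N}$ and $l_{1}\ldots l_{M}$ and expand
\begin{small}
\begin{align*}
\bigl(\mathcal{G}*_{N}\begin{pmatrix}\mathcal{A}&\mathcal{B}\end{pmatrix}\bigr)_{i_{1}\ldots i_{N}l_{1}\ldots l_{M}}
=\sum_{j_{1}\ldots j_{N}}g_{i_{1}\ldots i_{N}j_{1}\ldots j_{N}}\,\begin{pmatrix}\mathcal{A}&\mathcal{B}\end{pmatrix}_{j_{1}\ldots j_{N}l_{1}\ldots l_{M}}.
\end{align*}
\end{small}
If $l_{1}\ldots l_{M}$ is of type (i), the inner block entry equals $a_{j_{1}\ldots j_{N}l_{1}\ldots l_{M}}$ for every $j$, so the sum collapses to $(\mathcal{G}*_{N}\mathcal{A})_{i_{1}\ldots i_{N}l_{1}\ldots l_{M}}$, which is exactly the type-(i) entry of the right-hand side; type (ii) is the same with $\mathcal{B}$ in place of $\mathcal{A}$ and the shift $l_{s}\mapsto l_{s}-J_{s}$; and for type (iii) every summand vanishes, matching the $0$ entry on the right. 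Item (2) follows identically, contracting on the left index instead, or alternatively by transposing item (1) via the conjugate-transpose identity $(\mathcal{A}*_{N}\mathcal{B})^{*}=\mathcal{B}^{*}*_{N}\mathcal{A}^{*}$ from the preceding proposition together with the fact that the conjugate transpose interchanges the row and column block constructions.

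For item (3), the tensors $\begin{pmatrix}\mathcal{A}&\mathcal{B}\end{pmatrix}\in\mathbb{H}^{I(N)\times L(M)}$ and $\begin{pmatrix}\mathcal{C}\\ \mathcal{D}\end{pmatrix}\in\mathbb{H}^{L(M)\times I(N)}$ share the contracted dimension $L(M)$, so
\begin{small}
\begin{align*}
\Bigl(\begin{pmatrix}\mathcal{A}&\mathcal{B}\end{pmatrix}*_{M}\begin{pmatrix}\mathcal{C}\\ \mathcal{D}\end{pmatrix}\Bigr)_{i_{1}\ldots i_{N}i'_{1}\ldots i'_{N}}
=\sum_{l_{1}\ldots l_{M}}\begin{pmatrix}\mathcal{A}&\mathcal{B}\end{pmatrix}_{i_{1}\ldots i_{N}l_{1}\ldots l_{M}}\,\begin{pmatrix}\mathcal{C}\\ \mathcal{D}\end{pmatrix}_{l_{1}\ldots l_{M}i'_{1}\ldots i'_{N}}.
\end{align*}
\end{small}
Splitting the sum over $l$ according to the trichotomy, the type-(i) terms reassemble into $(\mathcal{A}*_{M}\mathcal{C})_{i_{1}\ldots i_{N}i'_{1}\ldots i'_{N}}$, the type-(ii) terms into $(\mathcal{B}*_{M}\mathcal{D})_{i_{1}\ldots i_{N}i'_{1}\ldots i'_{N}}$ after the shift, and every type-(iii) term carries a vanishing block entry; adding the contributions yields the claim. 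The computation is entirely routine, and the only place demanding care is precisely this index bookkeeping in item (3) — confirming that the mixed long indices contribute nothing and that the surviving ones reindex correctly onto $[J_{s}]$ and $[K_{s}]$ — which the trichotomy above is designed to dispatch.
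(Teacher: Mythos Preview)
The paper does not supply its own proof of this proposition; it is quoted verbatim from \cite{23} (Sun, Zheng, Bu, Wei) and stated without argument. Your entrywise verification via the index trichotomy is correct and is exactly the natural direct proof one would expect: the only content is the bookkeeping you highlight, namely that mixed long indices $l_{1}\ldots l_{M}$ contribute nothing because the block-tensor definitions set those entries to zero, while the unmixed ranges reindex onto $[J_{s}]$ and $[K_{s}]$ respectively. There is nothing to compare against in this paper, and your argument stands on its own.
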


\begin{definition}\cite{18}
Let $\mathcal{D} \in \mathbb{H}^{I(N)\times J(N)}$, then the Moore-Penrose inverse of $\mathcal{D}$  is the unique  tensor $\mathcal{D}^{\dagger}\in \mathbb{H}^{J(N)\times I(N)}$  satisfies:
\begin{small}
\begin{enumerate}
 \item $\mathcal{D}*_{N}\mathcal{D}^{\dagger}*_{N}\mathcal{D}=\mathcal{D}$,
 \item $\mathcal{D}^{\dagger}*_{N}\mathcal{D}*_{N}\mathcal{D}^{\dagger}=\mathcal{D}^{\dagger}$,
 \item $(\mathcal{D}*_{N}\mathcal{D}^{\dagger})^{*}=\mathcal{D}*_{N}\mathcal{D}^{\dagger}$,
 \item $(\mathcal{D}^{\dagger}*_{N}\mathcal{D})^{*}=\mathcal{D}^{\dagger}*_{N}\mathcal{D}$.
 \end{enumerate}
 \end{small}
where $\mathcal{R}_{\mathcal{D}}=I-\mathcal{D}*_{N}\mathcal{D}^{\dagger}$ and  $\mathcal{L}_{\mathcal{D}}=I-\mathcal{D}^{\dagger}*_{N}\mathcal{D}$ denote the projections along $\mathcal{D}$.
\end{definition}
\begin{definition}\cite{18}
Let $\eta$ be an element in the quaternion algebra basis $\{\mathbf{i},\mathbf{j},\mathbf{k}\}$. A tensor
$\mathcal{D} \in \mathbb{H}^{I(N)\times I(N)}$ is said to be $\eta$-Hermitian if $\mathcal{D}=\mathcal{D}^{\eta^{*}}$, where
$\mathcal{D}^{\eta^{*}}=-\eta \mathcal{D}^{*}\eta$.
\end{definition}
\begin{proposition}\cite{18}
Let $\mathcal{D} \in \mathbb{H}^{I(N)\times I(N)}$, then we have that
\begin{small}
\begin{enumerate}
\item $\mathcal{L}_{\mathcal{D}}*_{N}\mathcal{D}^{\dagger}=\mathcal{D}*_{N}\mathcal{L}_{\mathcal{D}}=0,\
\mathcal{R}_{\mathcal{D}}*_{N}\mathcal{D}=\mathcal{D}^{\dagger}*_{N}\mathcal{R}_{\mathcal{D}}=0$,
\item $(\mathcal{D}^{*})^{\dagger}=(\mathcal{D}^{\dagger})^{*},\ (\mathcal{D}^{\eta^{*}})^{\dagger}=(\mathcal{D}^{\dagger})^{\eta^{*}},$
\item $(\mathcal{L}_{\mathcal{D}})^{\eta^{*}}=\mathcal{R}_{\mathcal{D}^{\eta^{*}}},\ (\mathcal{R}_{\mathcal{D}})^{\eta^{*}}=\mathcal{L}_{\mathcal{D}^{\eta^{*}}},$
\item $(\mathcal{D}^{*}*_{N}\mathcal{D})^{\dagger}=\mathcal{D}^{\dagger}*_{N}(\mathcal{D}^{*})^{\dagger},\
(\mathcal{D}*_{N}\mathcal{D}^{*})^{\dagger}=(\mathcal{D}^{*})^{\dagger}*_{N}\mathcal{D}^{\dagger}.$
\end{enumerate}
\end{small}
\end{proposition}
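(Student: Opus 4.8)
The plan is to treat the four items one at a time, reducing each to the four defining relations of the Moore--Penrose inverse (call them (i)--(iv)) and to product--reversal rules for the involutions $\mathcal{A}\mapsto\mathcal{A}^{*}$ and $\mathcal{A}\mapsto\mathcal{A}^{\eta^{*}}$. For item~(1) I would substitute $\mathcal{L}_{\mathcal{D}}=\mathcal{I}-\mathcal{D}^{\dagger}*_{N}\mathcal{D}$ and $\mathcal{R}_{\mathcal{D}}=\mathcal{I}-\mathcal{D}*_{N}\mathcal{D}^{\dagger}$ and expand, using only associativity of $*_{N}$: for instance $\mathcal{L}_{\mathcal{D}}*_{N}\mathcal{D}^{\dagger}=\mathcal{D}^{\dagger}-\mathcal{D}^{\dagger}*_{N}\mathcal{D}*_{N}\mathcal{D}^{\dagger}=0$ by~(ii), while $\mathcal{D}*_{N}\mathcal{L}_{\mathcal{D}}=\mathcal{D}-\mathcal{D}*_{N}\mathcal{D}^{\dagger}*_{N}\mathcal{D}=0$ by~(i); the two identities for $\mathcal{R}_{\mathcal{D}}$ follow symmetrically.

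For item~(2), to prove $(\mathcal{D}^{*})^{\dagger}=(\mathcal{D}^{\dagger})^{*}$ I would check that $(\mathcal{D}^{\dagger})^{*}$ satisfies the four defining relations of the Moore--Penrose inverse of $\mathcal{D}^{*}$ and invoke its uniqueness: applying $(\cdot)^{*}$ to relations (i)--(iv) for $\mathcal{D}$ and using $(\mathcal{A}*_{N}\mathcal{B})^{*}=\mathcal{B}^{*}*_{N}\mathcal{A}^{*}$ turns them into the four defining relations for the pair $\bigl(\mathcal{D}^{*},(\mathcal{D}^{\dagger})^{*}\bigr)$ (with the roles of (iii) and (iv) interchanged). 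For the $\eta$-version I would first establish the analogue $(\mathcal{A}*_{N}\mathcal{B})^{\eta^{*}}=\mathcal{B}^{\eta^{*}}*_{N}\mathcal{A}^{\eta^{*}}$: writing $\mathcal{A}^{\eta^{*}}=-\eta\mathcal{A}^{*}\eta$ and multiplying out entrywise, the two inner $\eta$'s contribute $\eta^{2}=-1$, which cancels the leading sign, while the outer $\eta$'s reassemble, so the statement reduces to the ordinary conjugate--transpose reversal; along the way one records $\mathcal{I}^{\eta^{*}}=\mathcal{I}$ and that $(\cdot)^{\eta^{*}}$ is an involution (using $\eta^{2}=-1$ and $\overline{\eta}=-\eta$). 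With this reversal rule the same uniqueness argument gives $(\mathcal{D}^{\eta^{*}})^{\dagger}=(\mathcal{D}^{\dagger})^{\eta^{*}}$. Item~(3) is then immediate: applying $(\cdot)^{\eta^{*}}$ to $\mathcal{L}_{\mathcal{D}}=\mathcal{I}-\mathcal{D}^{\dagger}*_{N}\mathcal{D}$ and using additivity, $\mathcal{I}^{\eta^{*}}=\mathcal{I}$, the reversal rule and item~(2) yields $(\mathcal{L}_{\mathcal{D}})^{\eta^{*}}=\mathcal{I}-\mathcal{D}^{\eta^{*}}*_{N}(\mathcal{D}^{\eta^{*}})^{\dagger}=\mathcal{R}_{\mathcal{D}^{\eta^{*}}}$, and the companion identity for $\mathcal{R}_{\mathcal{D}}$ is obtained the same way.

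For item~(4), I would verify the four defining relations for the candidate $\mathcal{X}:=\mathcal{D}^{\dagger}*_{N}(\mathcal{D}^{\dagger})^{*}=\mathcal{D}^{\dagger}*_{N}(\mathcal{D}^{*})^{\dagger}$ relative to $\mathcal{M}:=\mathcal{D}^{*}*_{N}\mathcal{D}$. Both $\mathcal{M}$ and $\mathcal{X}$ are Hermitian, so the heart of the matter is that $\mathcal{M}*_{N}\mathcal{X}=\mathcal{X}*_{N}\mathcal{M}$; a short computation with the reversal rule and relations (i), (iii), (iv) shows that both products equal the Hermitian idempotent $\mathcal{D}^{\dagger}*_{N}\mathcal{D}$, which settles (iii)--(iv), and relations (i)--(ii) for $(\mathcal{M},\mathcal{X})$ then follow from these identities together with (i), (ii) for $\mathcal{D}$. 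Uniqueness gives the first formula of~(4), and substituting $\mathcal{D}^{*}$ for $\mathcal{D}$, using $(\mathcal{D}^{*})^{*}=\mathcal{D}$ and item~(2), gives the second. The only genuinely delicate point in the whole argument is the reversal rule for $(\cdot)^{\eta^{*}}$, because quaternion scalars neither commute with the tensor entries nor can be pulled through the Einstein summation freely; once that bookkeeping with $\eta^{2}=-1$ and $\overline{\eta}=-\eta$ is carried out carefully, everything else is a routine application of the Penrose relations and the uniqueness of the Moore--Penrose inverse.
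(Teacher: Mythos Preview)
The paper does not supply its own proof of this proposition: it is quoted verbatim from reference~\cite{18} as background material in the preliminaries section, so there is no in-paper argument to compare yours against. Judged on its own, your plan is sound and is exactly the standard route---reduce everything to the four Penrose relations plus the product-reversal rules for the two involutions, and appeal to uniqueness of the Moore--Penrose inverse.

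One point deserves to be made more explicit. In item~(2), for the $\eta$-version you write that ``the same uniqueness argument'' works once the reversal rule $(\mathcal{A}*_{N}\mathcal{B})^{\eta^{*}}=\mathcal{B}^{\eta^{*}}*_{N}\mathcal{A}^{\eta^{*}}$ is in hand. That is true for Penrose relations~(i) and~(ii), but relations~(iii) and~(iv) assert Hermiticity with respect to $(\cdot)^{*}$, not $(\cdot)^{\eta^{*}}$. To transport them you also need that the two involutions commute, i.e.\ $(\mathcal{A}^{*})^{\eta^{*}}=(\mathcal{A}^{\eta^{*}})^{*}$, or equivalently that $\mathcal{A}^{\eta^{*}}$ is Hermitian whenever $\mathcal{A}$ is. This follows from the same entrywise bookkeeping you mention (using $\overline{\eta}=-\eta$ and that quaternion conjugation reverses products), so it is not a gap so much as a step you should spell out: applying $(\cdot)^{\eta^{*}}$ to, say, $(\mathcal{D}*_{N}\mathcal{D}^{\dagger})^{*}=\mathcal{D}*_{N}\mathcal{D}^{\dagger}$ then yields $\bigl((\mathcal{D}^{\dagger})^{\eta^{*}}*_{N}\mathcal{D}^{\eta^{*}}\bigr)^{*}=(\mathcal{D}^{\dagger})^{\eta^{*}}*_{N}\mathcal{D}^{\eta^{*}}$, which is precisely Penrose relation~(iv) for the pair $\bigl(\mathcal{D}^{\eta^{*}},(\mathcal{D}^{\dagger})^{\eta^{*}}\bigr)$. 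With that addition your argument for all four items is complete.
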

\begin{lemma}\label{lma 2.3}\cite{18} Let $\mathcal{A} \in \mathbb{H}^{I(N)\times J(N)}$,\ $\mathcal{B} \in \mathbb{H}^{K(M)\times L(M)}$,\
$\mathcal{C} \in \mathbb{H}^{I(N)\times G(N)}$,\ $\mathcal{D} \in \mathbb{H}^{H(M)\times L(M)}$\ and
$\mathcal{E} \in \mathbb{H}^{I(N)\times L(M)}$. Set
\begin{small}
\begin{align*}
&\mathcal{P}=\mathcal{R}_{\mathcal{A}}*_{N}\mathcal{C},\ \mathcal{Q}=\mathcal{D}*_{M}\mathcal{L}_{\mathcal{B}},\ \mathcal{S}=\mathcal{C}*_{N}\mathcal{L}_{\mathcal{P}}.
\end{align*}
\end{small}
Then \eqref{1.1} is solvable if and only if
\begin{small}
\begin{align*}
&\mathcal{R}_{\mathcal{P}}*_{N}\mathcal{R}_{\mathcal{A}}*_{N}\mathcal{E}=0,\ \mathcal{E}*_{M}\mathcal{L}_{\mathcal{B}}*_{M}\mathcal{L}_{\mathcal{Q}}=0,\\
&\mathcal{R}_{\mathcal{A}}*_{N}\mathcal{E}*_{M}\mathcal{L}_{\mathcal{D}}=0,\ \mathcal{R}_{\mathcal{C}}*_{N}\mathcal{E}*_{M}\mathcal{L}_{\mathcal{B}}=0.
\end{align*}
\end{small}
In that case, the general solution to \eqref{1.1} can be expressed as follows:
\begin{small}
\begin{align*}
&\mathcal{X}=\mathcal{A}^{\dagger}*_{N}\mathcal{E}*_{M}\mathcal{B}^{\dagger}
-\mathcal{A}^{\dagger}*_{N}\mathcal{C}*_{N}\mathcal{P}^{\dagger}*_{N}\mathcal{E}*_{M}\mathcal{B}^{\dagger}
-\mathcal{A}^{\dagger}*_{N}\mathcal{S}*_{N}\mathcal{C}^{\dagger}*_{N}\mathcal{E}*_{M}\mathcal{Q}^{\dagger}\\
&\ \ \ \ \ \ *_{M}\mathcal{D}*_{M}\mathcal{B}^{\dagger}-\mathcal{A}^{\dagger}*_{N}\mathcal{S}*_{N}\mathcal{U}_{2}*_{M}\mathcal{R}_{\mathcal{Q}}*_{M}\mathcal{D}*_{M}\mathcal{B}^{\dagger}
+\mathcal{L}_{\mathcal{A}}*_{N}\mathcal{U}_{4}+\mathcal{U}_{5}*_{M}\mathcal{R}_{\mathcal{B}},\\
&\mathcal{Y}=\mathcal{P}^{\dagger}*_{N}\mathcal{E}*_{M}\mathcal{D}^{\dagger}+\mathcal{S}^{\dagger}*_{N}\mathcal{S}*_{N}\mathcal{C}^{\dagger}
*_{N}\mathcal{E}*_{M}\mathcal{Q}^{\dagger}+\mathcal{L}_{\mathcal{P}}*_{N}\mathcal{L}_{\mathcal{S}}*_{N}\mathcal{U}_{1}+
\mathcal{L}_{\mathcal{P}}*_{N}\mathcal{U}_{2}\\
&\ \ \ \ \ \ *_{M}\mathcal{R}_{\mathcal{Q}}+\mathcal{U}_{3}*_{M}\mathcal{R}_{\mathcal{D}},
\end{align*}
\end{small}
where $\mathcal{U}_{1},$ $\mathcal{U}_{2},$ $\mathcal{U}_{3},$ $\mathcal{U}_{4}$ and $\mathcal{U}_{5}$ are arbitrary  tensors with suitable orders.
\end{lemma}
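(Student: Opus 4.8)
The statement is the Einstein-product analogue over $\mathbb{H}$ of the classical consistency theorem for the matrix equation $AXB+CYD=E$, so the plan is to imitate the matrix argument and peel off the two unknowns one at a time. First I would observe that \eqref{1.1} is solvable if and only if there is a tensor $\mathcal{Y}$ for which the one-unknown equation $\mathcal{A}*_{N}\mathcal{X}*_{M}\mathcal{B}=\mathcal{E}-\mathcal{C}*_{N}\mathcal{Y}*_{M}\mathcal{D}$ is consistent in $\mathcal{X}$. By the basic criterion for a two-sided one-unknown tensor equation --- $\mathcal{G}=\mathcal{A}*_{N}\mathcal{X}*_{M}\mathcal{B}$ is solvable precisely when $\mathcal{R}_{\mathcal{A}}*_{N}\mathcal{G}=0$ and $\mathcal{G}*_{M}\mathcal{L}_{\mathcal{B}}=0$, with general solution $\mathcal{X}=\mathcal{A}^{\dagger}*_{N}\mathcal{G}*_{M}\mathcal{B}^{\dagger}+\mathcal{L}_{\mathcal{A}}*_{N}\mathcal{U}_{4}+\mathcal{U}_{5}*_{M}\mathcal{R}_{\mathcal{B}}$ --- this is equivalent to the existence of $\mathcal{Y}$ satisfying the coupled pair
\[
\text{(I)}\quad \mathcal{P}*_{N}\mathcal{Y}*_{M}\mathcal{D}=\mathcal{R}_{\mathcal{A}}*_{N}\mathcal{E},\qquad
\text{(II)}\quad \mathcal{C}*_{N}\mathcal{Y}*_{M}\mathcal{Q}=\mathcal{E}*_{M}\mathcal{L}_{\mathcal{B}};
\]
moreover, once such a $\mathcal{Y}$ is fixed, the general $\mathcal{X}$ is $\mathcal{A}^{\dagger}*_{N}(\mathcal{E}-\mathcal{C}*_{N}\mathcal{Y}*_{M}\mathcal{D})*_{M}\mathcal{B}^{\dagger}+\mathcal{L}_{\mathcal{A}}*_{N}\mathcal{U}_{4}+\mathcal{U}_{5}*_{M}\mathcal{R}_{\mathcal{B}}$.

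Next I would dispose of (I) alone. The equation $\mathcal{P}*_{N}\mathcal{Y}*_{M}\mathcal{D}=\mathcal{R}_{\mathcal{A}}*_{N}\mathcal{E}$ is consistent exactly when $\mathcal{R}_{\mathcal{P}}*_{N}\mathcal{R}_{\mathcal{A}}*_{N}\mathcal{E}=0$ and $\mathcal{R}_{\mathcal{A}}*_{N}\mathcal{E}*_{M}\mathcal{L}_{\mathcal{D}}=0$ --- these are the first and third listed conditions --- with general solution $\mathcal{Y}=\mathcal{P}^{\dagger}*_{N}\mathcal{E}*_{M}\mathcal{D}^{\dagger}+\mathcal{L}_{\mathcal{P}}*_{N}\mathcal{V}_{1}+\mathcal{V}_{2}*_{M}\mathcal{R}_{\mathcal{D}}$, using $\mathcal{P}^{\dagger}*_{N}\mathcal{R}_{\mathcal{A}}=\mathcal{P}^{\dagger}$ (which holds since $\mathcal{P}=\mathcal{R}_{\mathcal{A}}*_{N}\mathcal{C}$). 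Substituting this $\mathcal{Y}$ into (II): the $\mathcal{V}_{2}$-term dies because $\mathcal{R}_{\mathcal{D}}*_{M}\mathcal{Q}=\mathcal{R}_{\mathcal{D}}*_{M}\mathcal{D}*_{M}\mathcal{L}_{\mathcal{B}}=0$; writing $\mathcal{D}^{\dagger}*_{M}\mathcal{D}=\mathcal{I}-\mathcal{L}_{\mathcal{D}}$ and discarding the $\mathcal{L}_{\mathcal{D}}$-part by the condition $\mathcal{R}_{\mathcal{A}}*_{N}\mathcal{E}*_{M}\mathcal{L}_{\mathcal{D}}=0$, equation (II) collapses to a two-sided one-unknown equation in $\mathcal{V}_{1}$,
\[
\mathcal{S}*_{N}\mathcal{V}_{1}*_{M}\mathcal{Q}=\mathcal{F},\qquad \mathcal{F}:=(\mathcal{I}-\mathcal{C}*_{N}\mathcal{P}^{\dagger})*_{N}\mathcal{E}*_{M}\mathcal{L}_{\mathcal{B}}.
\]
Applying the same criterion once more, this is consistent iff $\mathcal{R}_{\mathcal{S}}*_{N}\mathcal{F}=0$ and $\mathcal{F}*_{M}\mathcal{L}_{\mathcal{Q}}=0$; using the Moore--Penrose identities recorded above, the idempotency of $\mathcal{C}*_{N}\mathcal{P}^{\dagger}*_{N}\mathcal{R}_{\mathcal{A}}$, and the conditions already secured, these simplify to exactly $\mathcal{E}*_{M}\mathcal{L}_{\mathcal{B}}*_{M}\mathcal{L}_{\mathcal{Q}}=0$ and $\mathcal{R}_{\mathcal{C}}*_{N}\mathcal{E}*_{M}\mathcal{L}_{\mathcal{B}}=0$, the remaining two listed conditions. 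Its general solution is $\mathcal{V}_{1}=\mathcal{S}^{\dagger}*_{N}\mathcal{F}*_{M}\mathcal{Q}^{\dagger}+\mathcal{L}_{\mathcal{S}}*_{N}\mathcal{U}_{1}+\mathcal{U}_{2}*_{M}\mathcal{R}_{\mathcal{Q}}$; feeding it back into $\mathcal{Y}=\mathcal{P}^{\dagger}*_{N}\mathcal{E}*_{M}\mathcal{D}^{\dagger}+\mathcal{L}_{\mathcal{P}}*_{N}\mathcal{V}_{1}+\mathcal{U}_{3}*_{M}\mathcal{R}_{\mathcal{D}}$ (with $\mathcal{V}_{2}$ renamed $\mathcal{U}_{3}$), simplifying via $\mathcal{L}_{\mathcal{P}}*_{N}\mathcal{S}^{\dagger}=\mathcal{S}^{\dagger}$, $\mathcal{L}_{\mathcal{B}}*_{M}\mathcal{Q}^{\dagger}=\mathcal{Q}^{\dagger}$, $\mathcal{C}*_{N}\mathcal{S}^{\dagger}=\mathcal{S}*_{N}\mathcal{S}^{\dagger}$ and expanding $\mathcal{F}$, yields the displayed closed form for $\mathcal{Y}$; inserting that into the $\mathcal{X}$-formula above and expanding $\mathcal{C}*_{N}\mathcal{Y}*_{M}\mathcal{D}$ (where the $\mathcal{U}_{1}$- and $\mathcal{U}_{3}$-contributions drop, since $\mathcal{S}*_{N}\mathcal{L}_{\mathcal{S}}=0$ and $\mathcal{R}_{\mathcal{D}}*_{M}\mathcal{D}=0$) gives the displayed closed form for $\mathcal{X}$.

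The step I expect to be the main obstacle is the one just described: proving that the consistency conditions $\mathcal{R}_{\mathcal{S}}*_{N}\mathcal{F}=0$, $\mathcal{F}*_{M}\mathcal{L}_{\mathcal{Q}}=0$ of the reduced $\mathcal{V}_{1}$-equation are genuinely \emph{equivalent} to conditions $2$ and $4$ rather than merely implied by them --- the forward implications are short substitutions, but the converses require exploiting that $\mathcal{C}*_{N}\mathcal{P}^{\dagger}*_{N}\mathcal{R}_{\mathcal{A}}$ is an idempotent whose range and kernel can be pinned down, together with the interlocking identities $\mathcal{P}=\mathcal{R}_{\mathcal{A}}*_{N}\mathcal{C}$ and $\mathcal{S}=\mathcal{C}*_{N}\mathcal{L}_{\mathcal{P}}$ --- and, relatedly, the long but mechanical Moore--Penrose bookkeeping that turns the nested substitution into the precise five-parameter formulas of the statement instead of something only equivalent in appearance. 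As an independent check, one may note that under a fixed unfolding the even-order quaternion tensors with the Einstein product are isomorphic to quaternion matrices in a way that preserves conjugate transpose, Moore--Penrose inverse, and the projectors $\mathcal{R}_{(\cdot)},\mathcal{L}_{(\cdot)}$; under this correspondence the lemma becomes literally the known matrix theorem for $AXB+CYD=E$, which both motivates the statement and shows the tensor proof can be pushed through verbatim.
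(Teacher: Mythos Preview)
The paper does not prove this lemma: it is quoted verbatim from \cite{18} as a preliminary result (note the citation attached directly to the lemma heading) and is used as a black box in the proofs of Proposition~\ref{lma 2.3b} and Theorem~\ref{system 3.33AAt}. There is therefore no ``paper's own proof'' to compare against.

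That said, your plan is the standard Baksalary--Kala peeling argument, transported to the Einstein-product setting, and it is correct in outline. The one place to be a bit more careful is the reduction of the pair (I)--(II) to the single $\mathcal{V}_{1}$-equation: you treat (I) first and substitute into (II), which is fine, but you should also explain why \emph{every} common solution of (I) and (II) arises from some choice of $\mathcal{V}_{1}$, $\mathcal{U}_{3}$ in your parametrization (i.e.\ that the parametrization of the solution set of (I) is genuinely surjective onto all $\mathcal{Y}$ satisfying (I)). That is immediate from the one-unknown lemma, but it is exactly the step that guarantees the four displayed conditions are \emph{necessary} as well as sufficient, and that the five-parameter formulas describe the \emph{general} solution rather than just a family of particular solutions. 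Your closing remark about the unfolding isomorphism to quaternion matrices is a legitimate alternative route and, indeed, is how \cite{18} itself would justify the tensor statement.
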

\begin{lemma}\label{lma 2.3a}\cite{21}
Consider the  system of tensor equations \eqref{1.4}, where
\begin{small}
\begin{align*}
&\mathcal{A}_{1} \in \mathbb{H}^{I(N)\times J(N)},\ \mathcal{A}_{2} \in \mathbb{H}^{I(N)\times Q(N)},\ \mathcal{A}_{3} \in \mathbb{H}^{I(N)\times P(N)},\
\mathcal{A}_{4} \in \mathbb{H}^{I(N)\times J(N)},\\
&\mathcal{A}_{5} \in \mathbb{H}^{I(N)\times Q(N)},\ \mathcal{B}_{1} \in \mathbb{H}^{L(M)\times K(M)},\ \mathcal{B}_{2} \in \mathbb{H}^{S(M)\times K(M)},\ \mathcal{B}_{3} \in \mathbb{H}^{T(M)\times K(M)},\\
&\mathcal{B}_{4} \in \mathbb{H}^{L(M)\times K(M)},\ \mathcal{B}_{5} \in \mathbb{H}^{S(M)\times K(M)},\ \mathcal{C}_{4} \in \mathbb{H}^{I(N)\times P(N)},\ \mathcal{C}_{5} \in \mathbb{H}^{I(N)\times P(N)},\\
&\mathcal{D}_{4} \in \mathbb{H}^{K(M)\times K(M)},\ \mathcal{D}_{5} \in \mathbb{H}^{K(M)\times K(M)},\
\mathcal{P} \in \mathbb{H}^{I(N)\times K(M)},\ \mathcal{E}_{i} \in \mathbb{H}^{I(N)\times K(M)}
\end{align*}
\end{small}
$(i=\overline{1,4})$ are given tensors over $\mathbb{H}$. Set
\begin{small}
\begin{align*}
&\mathcal{A}_{6}=\mathcal{C}_{4}*_{N}\mathcal{L}_{\mathcal{A}_{3}},\ \mathcal{B}_{6}=\mathcal{R}_{\mathcal{B}_{3}}*_{M}\mathcal{D}_{4},\
\mathcal{A}_{7}=\mathcal{C}_{5}*_{N}\mathcal{L}_{\mathcal{A}_{3}},\ \mathcal{B}_{7}=\mathcal{R}_{\mathcal{B}_{3}}*_{M}\mathcal{D}_{5},\\
&\mathcal{G}=\mathcal{P}-\mathcal{C}_{4}*_{N}\mathcal{A}_{3}^{\dagger}*_{N}\mathcal{E}_{3}*_{M}\mathcal{D}_{4}
-\mathcal{C}_{4}*_{N}\mathcal{L}_{\mathcal{A}_{3}}*_{N}\mathcal{E}_{4}*_{M}\mathcal{B}_{3}^{\dagger}*_{M}\mathcal{D}_{4},\\
&\mathcal{F}=\mathcal{Q}-\mathcal{C}_{5}*_{N}\mathcal{A}_{3}^{\dagger}*_{N}\mathcal{E}_{3}*_{M}\mathcal{D}_{5}
-\mathcal{C}_{5}*_{N}\mathcal{L}_{\mathcal{A}_{3}}*_{N}\mathcal{E}_{4}*_{M}\mathcal{B}_{3}^{\dagger}*_{M}\mathcal{D}_{5},\\
&\mathcal{M}_{1}=\mathcal{R}_{\mathcal{A}_{4}}*_{M}\mathcal{A}_{6},\ \mathcal{N}_{1}=\mathcal{B}_{6}*_{M}\mathcal{L}_{\mathcal{B}_{4}},\
\mathcal{S}_{1}=\mathcal{A}_{6}*_{N}\mathcal{L}_{\mathcal{M}_{1}},\\
&\mathcal{M}_{2}=\mathcal{R}_{\mathcal{A}_{5}}*_{M}\mathcal{A}_{7},\ \mathcal{N}_{2}=\mathcal{B}_{7}*_{M}\mathcal{L}_{\mathcal{B}_{5}},\
\mathcal{S}_{2}=\mathcal{A}_{7}*_{N}\mathcal{L}_{\mathcal{M}_{2}},\\
&\mathcal{A}_{11}=\begin{pmatrix}\mathcal{L}_{\mathcal{M}_{1}}*_{N}\mathcal{L}_{\mathcal{S}_{1}}& \mathcal{L}_{\mathcal{M}_{2}}*_{N}\mathcal{L}_{\mathcal{S}_{2}}\end{pmatrix},\
\mathcal{B}_{11}=\begin{pmatrix}\mathcal{R}_{\mathcal{B}_{6}}\\ \mathcal{R}_{\mathcal{B}_{7}}\end{pmatrix},\
\mathcal{A}=\mathcal{R}_{\mathcal{A}_{11}}*_{N}\mathcal{L}_{\mathcal{M}_{1}},\\
&
\mathcal{B}=\mathcal{R}_{\mathcal{N}_{1}}*_{M}\mathcal{L}_{\mathcal{B}_{11}},\
\mathcal{C}=\mathcal{R}_{\mathcal{A}_{11}}*_{N}\mathcal{L}_{\mathcal{M}_{2}},\
\mathcal{D}=\mathcal{R}_{\mathcal{N}_{2}}*_{M}\mathcal{L}_{\mathcal{B}_{11}},\ \mathcal{S}=\mathcal{C}*_{N}\mathcal{L}_{\mathcal{M}},\\
&\mathcal{E}=\mathcal{R}_{\mathcal{A}_{11}}*_{N}\mathcal{E}_{11}*_{M}\mathcal{L}_{\mathcal{B}_{11}},\
\mathcal{M}=\mathcal{R}_{\mathcal{A}}*_{N}\mathcal{C},\
\mathcal{N}=\mathcal{D}*_{M}\mathcal{L}_{\mathcal{B}},\ \mathcal{C}_{22}=\mathcal{A}_{4}^{\dagger}*_{N}\mathcal{S}_{1}\\
&\mathcal{A}_{22}=\begin{pmatrix}\mathcal{L}_{\mathcal{A}_{1}}& \mathcal{L}_{\mathcal{A}_{4}}\end{pmatrix},\
\mathcal{B}_{22}=\begin{pmatrix}\mathcal{R}_{\mathcal{B}_{1}}\\ \mathcal{R}_{\mathcal{B}_{4}}\end{pmatrix},\
\mathcal{D}_{22}=\mathcal{R}_{\mathcal{N}_{1}}*_{M}\mathcal{B}_{6}*_{N}\mathcal{B}_{4}^{\dagger},\\
&\mathcal{A}_{33}=\mathcal{R}_{\mathcal{A}_{22}}*_{N}\mathcal{C}_{22},\
\mathcal{B}_{33}=\mathcal{D}_{22}*_{M}\mathcal{L}_{\mathcal{B}_{22}},\
\mathcal{E}_{33}=\mathcal{R}_{\mathcal{A}_{22}}*_{N}\mathcal{E}_{22}*_{M}\mathcal{L}_{\mathcal{B}_{22}},\\
&\mathcal{A}_{44}=\begin{pmatrix}\mathcal{L}_{\mathcal{A}_{2}}& \mathcal{L}_{\mathcal{A}_{5}}\end{pmatrix},\
\mathcal{B}_{44}=\begin{pmatrix}\mathcal{R}_{\mathcal{B}_{2}}\\ \mathcal{R}_{\mathcal{B}_{5}}\end{pmatrix},\
\mathcal{C}_{44}=\mathcal{A}_{5}^{\dagger}*_{N}\mathcal{S}_{2},\\
&\begin{array}{l}
\mathcal{D}_{44}=\mathcal{R}_{\mathcal{N}_{2}}*_{M}\mathcal{B}_{7}*_{N}\mathcal{B}_{5}^{\dagger},\ \mathcal{E}_{11}=\mathcal{M}_{2}^{\dagger}*_{N}\mathcal{F}*_{M}\mathcal{B}_{7}^{\dagger}
+\mathcal{S}_{2}^{\dagger}*_{N}\mathcal{S}_{2}*_{N}\mathcal{A}_{7}^{\dagger}
*_{N}\mathcal{F}\\
\ \ \ \ \ \ *_{M}\mathcal{N}_{2}^{\dagger}
-\mathcal{M}_{1}^{\dagger}*_{N}\mathcal{G}*_{M}\mathcal{A}_{6}^{\dagger}-\mathcal{S}_{1}^{\dagger}*_{N}\mathcal{S}_{1}*_{N}\mathcal{A}_{6}^{\dagger}*_{N}\mathcal{G}*_{M}\mathcal{N}_{1}^{\dagger},
\end{array}\\
&\begin{array}{l}
\mathcal{E}_{22}=\mathcal{A}_{4}^{\dagger}*_{N}\mathcal{G}*_{M}\mathcal{B}_{4}^{\dagger}
-\mathcal{A}_{1}^{\dagger}*_{N}\mathcal{E}_{1}*_{M}\mathcal{B}_{1}^{\dagger}
-\mathcal{A}_{4}^{\dagger}*_{N}\mathcal{S}_{1}*_{N}\mathcal{A}_{6}^{\dagger}*_{N}\mathcal{G}*_{M}\mathcal{N}_{1}\\
\ \ \ \ \ \ *_{M}\mathcal{B}_{6}*_{M}\mathcal{B}_{4}^{\dagger}-\mathcal{A}_{4}^{\dagger}*_{N}\mathcal{A}_{6}*_{N}\mathcal{M}_{1}^{\dagger}
*_{N}\mathcal{G}*_{M}\mathcal{B}_{4}^{\dagger},
\end{array}
\end{align*}
\begin{align*}
&\begin{array}{l}
\mathcal{E}_{44}=\mathcal{A}_{5}^{\dagger}*_{N}\mathcal{F}*_{M}\mathcal{B}_{5}^{\dagger}
-\mathcal{A}_{2}^{\dagger}*_{N}\mathcal{E}_{2}*_{M}\mathcal{B}_{2}^{\dagger}
-\mathcal{A}_{5}^{\dagger}*_{N}\mathcal{S}_{2}*_{N}\mathcal{A}_{7}^{\dagger}*_{N}\mathcal{F}*_{M}\mathcal{N}_{2}\\
\ \ \ \ \ \ *_{M}\mathcal{B}_{7}*_{M}\mathcal{B}_{5}^{\dagger}-\mathcal{A}_{5}^{\dagger}*_{N}\mathcal{A}_{7}*_{N}\mathcal{M}_{2}^{\dagger}*_{N}\mathcal{F}
*_{M}\mathcal{B}_{5}^{\dagger},
\end{array}\\
&\begin{array}{l}
\mathcal{E}_{66}=\mathcal{A}_{33}^{\dagger}*_{N}\mathcal{E}_{33}*_{M}\mathcal{B}_{33}^{\dagger}
-\mathcal{A}^{\dagger}*_{N}\mathcal{E}*_{M}\mathcal{B}^{\dagger}
+\mathcal{A}^{\dagger}*_{N}\mathcal{S}*_{N}\mathcal{C}^{\dagger}*_{N}\mathcal{E}*_{M}\mathcal{N}^{\dagger}\\
\ \ \ \ \ \ *_{M}\mathcal{D}*_{N}\mathcal{B}^{\dagger}+\mathcal{A}^{\dagger}*_{N}\mathcal{C}*_{N}\mathcal{M}^{\dagger}*_{N}\mathcal{E}
*_{M}\mathcal{B}^{\dagger},\ \mathcal{C}_{66}=\mathcal{A}^{\dagger}*_{N}\mathcal{S},\
\end{array}\\
&\mathcal{A}_{55}=\mathcal{R}_{\mathcal{A}_{44}}*_{N}\mathcal{C}_{44},\
\mathcal{B}_{55}=\mathcal{D}_{44}*_{M}\mathcal{L}_{\mathcal{B}_{44}},\
\mathcal{E}_{55}=\mathcal{R}_{\mathcal{A}_{44}}*_{N}\mathcal{E}_{44}*_{M}\mathcal{L}_{\mathcal{B}_{44}},\\
&\mathcal{A}_{66}=\begin{pmatrix}\mathcal{L}_{\mathcal{A}}& \mathcal{L}_{\mathcal{A}_{33}}\end{pmatrix},\
\mathcal{B}_{66}=\begin{pmatrix}\mathcal{R}_{\mathcal{B}}\\ \mathcal{R}_{\mathcal{B}_{33}}\end{pmatrix},\
\mathcal{D}_{66}=\mathcal{R}_{\mathcal{N}}*_{M}\mathcal{D}*_{M}\mathcal{B}^{\dagger},\\
&\mathcal{A}_{77}=\mathcal{R}_{\mathcal{A}_{66}}*_{N}\mathcal{C}_{66},\
\mathcal{B}_{77}=\mathcal{D}_{66}*_{M}\mathcal{L}_{\mathcal{B}_{66}},\
\mathcal{E}_{77}=\mathcal{R}_{\mathcal{A}_{66}}*_{N}\mathcal{E}_{66}*_{M}\mathcal{L}_{\mathcal{B}_{66}},\\
&\mathcal{A}_{88}=\begin{pmatrix}\mathcal{L}_{\mathcal{M}}*_{N}\mathcal{L}_{\mathcal{S}}& \mathcal{L}_{\mathcal{A}_{55}}\end{pmatrix},\
\mathcal{B}_{88}=\begin{pmatrix}\mathcal{R}_{\mathcal{D}}\\ \mathcal{R}_{\mathcal{B}_{55}}\end{pmatrix},\
\mathcal{C}_{88}=\mathcal{L}_{\mathcal{M}},\
\mathcal{D}_{88}=\mathcal{R}_{\mathcal{N}},\\
&\mathcal{E}_{88}=\mathcal{A}_{55}^{\dagger}*_{N}\mathcal{E}_{55}*_{M}\mathcal{B}_{55}^{\dagger}
-\mathcal{M}^{\dagger}*_{N}\mathcal{E}*_{M}\mathcal{B}^{\dagger}
-\mathcal{S}^{\dagger}*_{N}\mathcal{S}*_{N}\mathcal{C}^{\dagger}*_{N}\mathcal{E}*_{M}\mathcal{N}^{\dagger},\\
&\mathcal{A}_{99}=\mathcal{R}_{\mathcal{A}_{88}}*_{N}\mathcal{C}_{88},\
\mathcal{B}_{99}=\mathcal{D}_{88}*_{M}\mathcal{L}_{\mathcal{B}_{88}},\
\mathcal{E}_{99}=\mathcal{R}_{\mathcal{A}_{88}}*_{N}\mathcal{E}_{88}*_{M}\mathcal{L}_{\mathcal{B}_{88}},\\
&\mathcal{\widetilde{A}}=\begin{pmatrix}\mathcal{L}_{\mathcal{A}_{77}}& -\mathcal{L}_{\mathcal{A}_{99}}\end{pmatrix},\
\mathcal{\widetilde{B}}=\begin{pmatrix}\mathcal{R}_{\mathcal{B}_{77}}\\ -\mathcal{R}_{\mathcal{B}_{99}}\end{pmatrix},\\
&\mathcal{\widetilde{E}}=\mathcal{A}_{77}^{\dagger}*_{N}\mathcal{E}_{77}*_{M}\mathcal{B}_{77}^{\dagger}
-\mathcal{A}_{99}^{\dagger}*_{N}\mathcal{E}_{99}*_{M}\mathcal{B}_{99}^{\dagger}.\
\end{align*}
\end{small}
Then  \eqref{1.4} is solvable  if and only if
\begin{small}
\begin{align*}
&\mathcal{R}_{\mathcal{A}_{3}}*_{N}\mathcal{E}_{3}=0,\ \mathcal{E}_{4}*_{M}\mathcal{L}_{\mathcal{B}_{3}}=0,\
\mathcal{A}_{3}*_{N}\mathcal{E}_{3}=\mathcal{E}_{4}*_{M}\mathcal{B}_{3},\\
&\mathcal{R}_{\mathcal{A}_{4}}*_{N}\mathcal{G}*_{M}\mathcal{L}_{\mathcal{B}_{6}}=0,\
\mathcal{R}_{\mathcal{A}_{6}}*_{N}\mathcal{G}*_{M}\mathcal{L}_{\mathcal{B}_{4}}=0,\
\mathcal{R}_{\mathcal{S}_{1}}*_{N}\mathcal{R}_{\mathcal{A}_{4}}*_{N}\mathcal{G}=0,\\
&\mathcal{G}*_{M}\mathcal{L}_{\mathcal{B}_{4}}*_{M}\mathcal{L}_{\mathcal{N}_{1}}=0,\
\mathcal{R}_{\mathcal{A}_{5}}*_{N}\mathcal{F}*_{M}\mathcal{L}_{\mathcal{B}_{7}}=0,\
\mathcal{R}_{\mathcal{A}_{7}}*_{N}\mathcal{F}*_{M}\mathcal{L}_{\mathcal{B}_{5}}=0,\\
&\mathcal{R}_{\mathcal{S}_{2}}*_{N}\mathcal{R}_{\mathcal{A}_{5}}*_{N}\mathcal{F}=0,\
\mathcal{F}*_{M}\mathcal{L}_{\mathcal{B}_{5}}*_{M}\mathcal{L}_{\mathcal{N}_{2}}=0,\
\mathcal{R}_{\mathcal{M}}*_{N}\mathcal{R}_{\mathcal{A}}*_{N}\mathcal{E}=0,\\
&\mathcal{R}_{\mathcal{A}}*_{N}\mathcal{E}*_{M}\mathcal{L}_{\mathcal{D}}=0,\
\mathcal{E}*_{M}\mathcal{L}_{\mathcal{B}}*_{M}\mathcal{L}_{\mathcal{N}}=0,\
\mathcal{R}_{\mathcal{C}}*_{N}\mathcal{E}*_{M}\mathcal{L}_{\mathcal{B}}=0,\\
&\mathcal{R}_{\mathcal{A}_{1}}*_{N}\mathcal{E}_{1}=0,\ \mathcal{E}_{1}*_{M}\mathcal{L}_{\mathcal{B}_{1}}=0,\
\mathcal{R}_{\mathcal{A}_{2}}*_{N}\mathcal{E}_{2}=0,\ \mathcal{E}_{2}*_{M}\mathcal{L}_{\mathcal{B}_{2}}=0,\\
&\mathcal{R}_{\mathcal{A}_{33}}*_{N}\mathcal{E}_{33}=0,\ \mathcal{E}_{33}*_{M}\mathcal{L}_{\mathcal{B}_{33}}=0,\
\mathcal{R}{\mathcal{A}_{55}}*_{N}\mathcal{E}_{55}=0,\ \mathcal{E}_{55}*_{M}\mathcal{L}_{\mathcal{B}_{55}}=0,\\
&\mathcal{R}_{\mathcal{A}_{77}}*_{N}\mathcal{E}_{77}=0,\ \mathcal{E}_{77}*_{M}\mathcal{L}_{\mathcal{B}_{77}}=0,\
\mathcal{R}_{\mathcal{A}_{99}}*_{N}\mathcal{E}_{99}=0,\ \mathcal{E}_{99}*_{M}\mathcal{L}_{\mathcal{B}_{99}}=0,\\
&\mathcal{R}_{\mathcal{\widetilde{A}}}*_{N}\mathcal{\widetilde{E}}*_{M}\mathcal{L}_{\widetilde{B}}=0.
\end{align*}
\end{small}
Under these circumstances, the general solution to \eqref{1.4} can be expressed as follows:
\begin{small}
\begin{align*}
&\mathcal{X}=\mathcal{A}_{1}^{\dagger}*_{N}\mathcal{E}_{1}*_{M}\mathcal{B}_{1}^{\dagger}
+\mathcal{L}_{\mathcal{A}_{1}}*_{N}\mathcal{U}_{1}+\mathcal{U}_{2}*_{M}\mathcal{R}_{\mathcal{B}_{1}},\\
&\mathcal{Y}=\mathcal{A}_{2}^{\dagger}*_{N}\mathcal{E}_{2}*_{M}\mathcal{B}_{2}^{\dagger}
+\mathcal{L}_{\mathcal{A}_{2}}*_{N}\mathcal{U}_{3}+\mathcal{U}_{4}*_{M}\mathcal{R}_{\mathcal{B}_{2}},\\
&\mathcal{Z}=\mathcal{A}_{3}^{\dagger}*_{N}\mathcal{E}_{3}+\mathcal{L}_{\mathcal{A}_{3}}*_{N}\mathcal{E}_{4}*_{M}\mathcal{B}_{3}^{\dagger}
+\mathcal{L}_{\mathcal{A}_{3}}*_{N}\mathcal{W}*_{M}\mathcal{R}_{\mathcal{B}_{3}},
\end{align*}
\end{small}
where
\begin{small}
\begin{subequations}
\begin{align*}
&\begin{array}{l}
 \mathcal{U}_{1}=\begin{pmatrix}\mathcal{I}\ \  0\end{pmatrix}*_{N}(\mathcal{A}_{22}^{\dagger}*_{N}
(\mathcal{E}_{22}-\mathcal{C}_{22}*_{N}\mathcal{V}_{2}*_{M}\mathcal{D}_{22})
-\mathcal{A}_{22}^{\dagger}*_{N}\mathcal{H}_{12}*_{M}\mathcal{B}_{22}\\
 \ \ \ \ \ \ +\mathcal{L}_{\mathcal{A}_{22}}*_{N}\mathcal{H}_{11}),
 \end{array}\\
&\begin{array}{l}
 \mathcal{U}_{2}=(\mathcal{R}_{\mathcal{A}_{22}}*_{N}(\mathcal{E}_{22}-\mathcal{C}_{22}*_{N}\mathcal{V}_{2}*_{M}\mathcal{D}_{22})
*_{M}\mathcal{B}_{22}^{\dagger}+\mathcal{A}_{22}*_{N}\mathcal{A}_{22}^{\dagger}*_{N}\mathcal{H}_{12}\\
\ \ \ \ \ \  \mathcal{H}_{13}*_{M}\mathcal{R}_{\mathcal{B}_{22}})*_{M}\begin{pmatrix}\mathcal{I}\\ 0\end{pmatrix},
 \end{array}
 \end{align*}
\begin{align*}
 &\begin{array}{l}
 \mathcal{U}_{3}=\begin{pmatrix}\mathcal{I}& 0\end{pmatrix}*_{N}(\mathcal{A}_{44}^{\dagger}*_{N}
(\mathcal{E}_{44}-\mathcal{C}_{44}*_{N}\mathcal{T}_{2}*_{M}\mathcal{D}_{44})
-\mathcal{A}_{44}^{\dagger}*_{N}\mathcal{H}_{22}*_{M}\mathcal{B}_{44}\\
\ \ \ \ \ \ +\mathcal{L}_{\mathcal{A}_{44}}*_{N}\mathcal{H}_{21}),
 \end{array}\\
&\begin{array}{l}
 \mathcal{U}_{4}=(\mathcal{R}_{\mathcal{A}_{44}}*_{N}(\mathcal{E}_{44}-\mathcal{C}_{44}*_{N}\mathcal{T}_{2}*_{M}\mathcal{D}_{44})
*_{M}\mathcal{B}_{44}^{\dagger}+\mathcal{A}_{44}*_{N}\mathcal{A}_{44}^{\dagger}*_{N}\mathcal{H}_{22}\\
\ \ \ \ \ \  \mathcal{H}_{23}*_{M}\mathcal{R}_{\mathcal{B}_{44}})*_{M}\begin{pmatrix}\mathcal{I}\\ 0\end{pmatrix},
 \end{array}\\
&\begin{array}{l}
 \mathcal{W}=\mathcal{M}_{1}^{\dagger}*_{N}\mathcal{G}*_{M}\mathcal{B}_{6}^{\dagger}+\mathcal{S}_{1}^{\dagger}*_{N}\mathcal{S}_{1}*_{N}\mathcal{A}_{6}^{\dagger}
*_{N}\mathcal{G}*_{M}\mathcal{N}_{1}^{\dagger}-\mathcal{L}_{\mathcal{M}_{1}}*_{N}\mathcal{L}_{\mathcal{S}_{1}}\\
\ \ \ \ \ \ *_{N}\mathcal{V}_{1}+\mathcal{L}_{\mathcal{M}_{1}}*_{N}\mathcal{V}_{2}*_{M}\mathcal{R}_{\mathcal{N}_{1}}
+\mathcal{V}_{3}*_{M}\mathcal{R}_{\mathcal{B}_{6}},
 \end{array}\\
&\begin{array}{l}
 \mathcal{V}_{1}=\begin{pmatrix}\mathcal{I}& 0\end{pmatrix}*_{N}(\mathcal{A}_{11}^{\dagger}*_{N}
(\mathcal{E}_{11}-\mathcal{L}_{\mathcal{M}_{1}}*_{N}\mathcal{\nu}_{2}*_{M}\mathcal{R}_{\mathcal{N}_{1}}-
\mathcal{L}_{\mathcal{M}_{2}}*_{N}\mathcal{T}_{2}*_{M}\mathcal{R}_{\mathcal{N}_{2}})\\
\ \ \ \ \ \ +\mathcal{W}_{1}*_{M}\mathcal{B}_{11}
+\mathcal{L}_{\mathcal{A}_{11}}*_{N}\mathcal{W}_{2}),
 \end{array}\\
 &\begin{array}{l}
\mathcal{V}_{2}=\mathcal{A}^{\dagger}*_{N}\mathcal{E}*_{M}\mathcal{B}^{\dagger}
-\mathcal{A}^{\dagger}*_{N}\mathcal{S}*_{N}\mathcal{C}^{\dagger}*_{N}\mathcal{E}*_{M}\mathcal{N}^{\dagger}*_{M}\mathcal{D}*_{M}\mathcal{B}^{\dagger}
-\mathcal{A}^{\dagger}*_{N}\\
\ \ \ \ \ \  \mathcal{C}*_{N}\mathcal{M}^{\dagger}*_{N}\mathcal{E}*_{M}\mathcal{B}^{\dagger}
 +\mathcal{A}^{\dagger}*_{N}\mathcal{S}*_{N}\mathcal{W}_{4}*_{M}\mathcal{R}_{\mathcal{N}}*_{M}\mathcal{D}*_{M}\mathcal{B}^{\dagger}\\
\ \ \ \ \ \ +\mathcal{L}_{\mathcal{A}}*_{N}\mathcal{W}_{5}+\mathcal{W}_{6}*_{M}\mathcal{R}_{\mathcal{B}},
 \end{array}\\
 &\begin{array}{l}
 \mathcal{V}_{3}=(\mathcal{R}_{\mathcal{A}_{11}}*_{N}(\mathcal{E}_{11}-\mathcal{L}_{\mathcal{M}_{1}}*_{N}\mathcal{\nu}_{2}*_{M}\mathcal{R}_{\mathcal{N}_{1}}
-\mathcal{L}_{\mathcal{M}_{2}}*_{N}\mathcal{T}_{2}*_{M}\mathcal{R}_{\mathcal{N}_{2}})
\\
\ \ \ \ \ \ *_{M}\mathcal{B}_{11}^{\dagger}-\mathcal{A}_{11}*_{N}\mathcal{W}_{1}-\mathcal{W}_{3}*_{M}\mathcal{R}_{\mathcal{B}_{11}})*_{M}\begin{pmatrix}\mathcal{I}\\ 0\end{pmatrix},
 \end{array}\\
 &\begin{array}{l}
\mathcal{T}_{2}=\mathcal{M}^{\dagger}*_{N}\mathcal{E}*_{M}\mathcal{D}^{\dagger}+\mathcal{S}^{\dagger}*_{N}\mathcal{S}*_{N}\mathcal{C}^{\dagger}
*_{N}\mathcal{E}*_{M}\mathcal{N}^{\dagger}+\mathcal{L}_{\mathcal{M}}*_{N}\mathcal{L}_{\mathcal{S}}*_{N}\mathcal{W}_{7}\\
\ \ \ \ \ \ +\mathcal{L}_{\mathcal{M}}*_{N}\mathcal{W}_{4}*_{M}\mathcal{R}_{\mathcal{N}}
+\mathcal{W}_{8}*_{M}\mathcal{R}_{\mathcal{D}},
 \end{array}\\
&\mathcal{W}_{4}=\mathcal{A}_{77}^{\dagger}*_{N}\mathcal{E}_{77}*_{M}\mathcal{B}_{77}^{\dagger}
-\mathcal{L}_{\mathcal{A}_{77}}*_{N}\mathcal{Q}_{1}-\mathcal{Q}_{2}*_{M}\mathcal{R}_{\mathcal{B}_{77}},\\
&\begin{array}{l}
\mathcal{W}_{5}=\begin{pmatrix}\mathcal{I}& 0\end{pmatrix}*_{N}(\mathcal{A}_{66}^{\dagger}*_{N}
(\mathcal{E}_{66}-\mathcal{C}_{66}*_{N}\mathcal{W}_{4}*_{M}\mathcal{D}_{66})
-\mathcal{A}_{66}^{\dagger}*_{N}\mathcal{H}_{32}*_{M}\mathcal{B}_{66}\\
\ \ \ \ \ \ +\mathcal{L}_{\mathcal{A}_{66}}*_{N}\mathcal{H}_{31}),
 \end{array}\\
&\begin{array}{l}
\mathcal{W}_{6}=(\mathcal{R}_{\mathcal{A}_{66}}*_{N}(\mathcal{E}_{66}-\mathcal{C}_{66}*_{N}\mathcal{W}_{4}*_{M}\mathcal{D}_{66})
*_{M}\mathcal{B}_{66}^{\dagger}+\mathcal{A}_{66}*_{N}\mathcal{A}_{66}^{\dagger}*_{N}\mathcal{H}_{32}\\
\ \ \ \ \ \ \mathcal{H}_{33}*_{M}\mathcal{R}_{\mathcal{B}_{66}})*_{M}\begin{pmatrix}\mathcal{I}\\ 0\end{pmatrix},
 \end{array}\\
&\begin{array}{l}
\mathcal{W}_{7}=\begin{pmatrix}\mathcal{I}& 0\end{pmatrix}*_{N}(\mathcal{A}_{88}^{\dagger}*_{N}
(\mathcal{E}_{88}-\mathcal{C}_{88}*_{N}\mathcal{W}_{4}*_{M}\mathcal{D}_{88})
-\mathcal{A}_{88}^{\dagger}*_{N}\mathcal{H}_{42}\\
\ \ \ \ \ \ *_{M}\mathcal{B}_{88}+\mathcal{L}_{\mathcal{A}_{88}}*_{N}\mathcal{H}_{41}),
 \end{array}\\
&\begin{array}{l}
\mathcal{W}_{8}=(\mathcal{R}_{\mathcal{A}_{88}}*_{N}(\mathcal{E}_{88}-\mathcal{C}_{88}*_{N}\mathcal{W}_{4}*_{M}\mathcal{D}_{88})
*_{M}\mathcal{B}_{88}^{\dagger}+\mathcal{A}_{88}*_{N}\mathcal{A}_{88}^{\dagger}*_{N}\mathcal{H}_{42}\\
\ \ \ \ \ \ \mathcal{H}_{43}*_{M}\mathcal{R}_{\mathcal{B}_{88}})*_{M}\begin{pmatrix}\mathcal{I}\\ 0\end{pmatrix},
 \end{array}\\
&\mathcal{Q}_{1}=\begin{pmatrix}\mathcal{I}& 0\end{pmatrix}*_{N}(\mathcal{\widetilde{A}}^{\dagger}*_{N}
\mathcal{\widetilde{E}}-\mathcal{\widetilde{A}}^{\dagger}*_{N}\mathcal{K}_{2}*_{M}\mathcal{\widetilde{B}}+\mathcal{L}_{\mathcal{A}}*_{N}\mathcal{K}_{1}),\\
&\mathcal{Q}_{2}=(\mathcal{R}_{\mathcal{\widetilde{A}}}*_{N}\mathcal{\widetilde{E}}*_{M}\mathcal{B}^{\dagger}
+\mathcal{\widetilde{A}}*_{N}\mathcal{\widetilde{A}}^{\dagger}*_{N}\mathcal{K}_{2}+\mathcal{K}_{3}*_{M}\mathcal{R}_{\mathcal{\widetilde{B}}})
*_{M}\begin{pmatrix}\mathcal{I}\\ 0\end{pmatrix},
\end{align*}
\end{subequations}
\end{small}
\end{lemma}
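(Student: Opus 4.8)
The plan is to solve \eqref{1.4} by a hierarchical reduction: first dispose of the decoupled equations, substitute their general solutions into the two remaining coupled Sylvester-type equations, and then peel off the coupling by repeated appeals to \lemref{lma 2.3}. So I would start with the blocks $\mathcal{A}_1*_N\mathcal{X}*_M\mathcal{B}_1=\mathcal{E}_1$, $\mathcal{A}_2*_N\mathcal{Y}*_M\mathcal{B}_2=\mathcal{E}_2$ and the one-sided pair $\mathcal{A}_3*_N\mathcal{Z}=\mathcal{E}_3$, $\mathcal{Z}*_M\mathcal{B}_3=\mathcal{E}_4$. The first two are single two-sided equations, each solvable iff $\mathcal{R}_{\mathcal{A}_i}*_N\mathcal{E}_i=0$ and $\mathcal{E}_i*_M\mathcal{L}_{\mathcal{B}_i}=0$, with general solution $\mathcal{A}_i^{\dagger}*_N\mathcal{E}_i*_M\mathcal{B}_i^{\dagger}+\mathcal{L}_{\mathcal{A}_i}*_N(\cdot)+(\cdot)*_M\mathcal{R}_{\mathcal{B}_i}$; the one-sided pair is solvable iff $\mathcal{R}_{\mathcal{A}_3}*_N\mathcal{E}_3=0$, $\mathcal{E}_4*_M\mathcal{L}_{\mathcal{B}_3}=0$, and $\mathcal{A}_3*_N\mathcal{E}_3=\mathcal{E}_4*_M\mathcal{B}_3$, with general solution $\mathcal{A}_3^{\dagger}*_N\mathcal{E}_3+\mathcal{L}_{\mathcal{A}_3}*_N\mathcal{E}_4*_M\mathcal{B}_3^{\dagger}+\mathcal{L}_{\mathcal{A}_3}*_N\mathcal{W}*_M\mathcal{R}_{\mathcal{B}_3}$ with $\mathcal{W}$ free.

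Next I would substitute these into the last two equations of \eqref{1.4}. Setting $\mathcal{A}_6=\mathcal{C}_4*_N\mathcal{L}_{\mathcal{A}_3}$, $\mathcal{B}_6=\mathcal{R}_{\mathcal{B}_3}*_M\mathcal{D}_4$, $\mathcal{A}_7=\mathcal{C}_5*_N\mathcal{L}_{\mathcal{A}_3}$, $\mathcal{B}_7=\mathcal{R}_{\mathcal{B}_3}*_M\mathcal{D}_5$ and moving all known terms to the right as $\mathcal{G}$ and $\mathcal{F}$, the fifth equation becomes $\mathcal{A}_4*_N\mathcal{X}*_M\mathcal{B}_4+\mathcal{A}_6*_N\mathcal{W}*_M\mathcal{B}_6=\mathcal{G}$ and the sixth $\mathcal{A}_5*_N\mathcal{Y}*_M\mathcal{B}_5+\mathcal{A}_7*_N\mathcal{W}*_M\mathcal{B}_7=\mathcal{F}$, with $\mathcal{X},\mathcal{Y}$ ranging over the affine families just found. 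Each is an instance of \eqref{1.1}, so \lemref{lma 2.3} applies: solving each for its pair of unknowns gives the conditions built from $\mathcal{G}$ (namely $\mathcal{R}_{\mathcal{A}_4}*_N\mathcal{G}*_M\mathcal{L}_{\mathcal{B}_6}=0$, $\mathcal{R}_{\mathcal{A}_6}*_N\mathcal{G}*_M\mathcal{L}_{\mathcal{B}_4}=0$, $\mathcal{R}_{\mathcal{S}_1}*_N\mathcal{R}_{\mathcal{A}_4}*_N\mathcal{G}=0$, $\mathcal{G}*_M\mathcal{L}_{\mathcal{B}_4}*_M\mathcal{L}_{\mathcal{N}_1}=0$), the parallel conditions from $\mathcal{F}$, and two parametrizations of $\mathcal{W}$: one in $\mathcal{G}$ with free tensors $\mathcal{V}_1,\mathcal{V}_2,\mathcal{V}_3$, the other in $\mathcal{F}$ with $\mathcal{T}$-type free tensors.

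The coupling is then concentrated in the demand that the two expressions for $\mathcal{W}$ agree. Equating them produces a linear tensor equation in the free parameters; regrouping turns it into another two-variable Sylvester-type equation, with coefficients $\mathcal{A}_{11},\mathcal{B}_{11}$ and derived $\mathcal{A},\mathcal{B},\mathcal{C},\mathcal{D},\mathcal{E}$, so \lemref{lma 2.3} applies a third time and yields $\mathcal{R}_{\mathcal{M}}*_N\mathcal{R}_{\mathcal{A}}*_N\mathcal{E}=0$, $\mathcal{R}_{\mathcal{A}}*_N\mathcal{E}*_M\mathcal{L}_{\mathcal{D}}=0$, $\mathcal{E}*_M\mathcal{L}_{\mathcal{B}}*_M\mathcal{L}_{\mathcal{N}}=0$, $\mathcal{R}_{\mathcal{C}}*_N\mathcal{E}*_M\mathcal{L}_{\mathcal{B}}=0$. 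Back-substituting to recover $\mathcal{X},\mathcal{Y}$ and the $\mathcal{U}_i$'s then forces the successive auxiliary tensors $\mathcal{A}_{22},\mathcal{B}_{22},\dots,\mathcal{A}_{99},\mathcal{B}_{99}$ and $\widetilde{\mathcal{A}},\widetilde{\mathcal{B}},\widetilde{\mathcal{E}}$, and the remaining rank-type conditions $\mathcal{R}_{\mathcal{A}_{33}}*_N\mathcal{E}_{33}=0,\ \dots,\ \mathcal{R}_{\widetilde{\mathcal{A}}}*_N\widetilde{\mathcal{E}}*_M\mathcal{L}_{\widetilde{\mathcal{B}}}=0$. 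Since each reduction is an equivalence, collecting all conditions gives the claimed criterion and unwinding the parametrizations gives the displayed solution.

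The hard part will be administrative, not conceptual: every application of \lemref{lma 2.3} spawns a fresh layer of auxiliary tensors and solvability conditions, and one must check that the conditions produced at one layer are exactly those that make the next substituted equation well posed, so that nothing is lost and nothing is redundant. It is equally delicate to keep the various free tensors genuinely independent through the nested back-substitutions and to verify at the end that the resulting formulas for $\mathcal{X},\mathcal{Y},\mathcal{Z}$ satisfy all of \eqref{1.4} for every admissible choice of the $\mathcal{U}_i$'s; that bookkeeping, not any single identity, is where essentially all the effort lies.
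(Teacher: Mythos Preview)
The paper does not prove this lemma; it is quoted verbatim from \cite{21} and stated without proof as background in Section~2. So there is no ``paper's own proof'' to compare against here.

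That said, your outline is the standard route and matches how such results are established in \cite{21} and, indeed, how the present paper proves its own Theorem~\ref{system 3.33AAt}: solve the decoupled blocks first, substitute the resulting affine families into the two coupled Sylvester-type equations to obtain $\mathcal{A}_4*_N\mathcal{X}*_M\mathcal{B}_4+\mathcal{A}_6*_N\mathcal{W}*_M\mathcal{B}_6=\mathcal{G}$ and its $\mathcal{F}$-analogue, apply \lemref{lma 2.3} to each, and then reconcile the two parametrizations of the shared unknown $\mathcal{W}$ by a further application of \lemref{lma 2.3}. Your identification of the bookkeeping as the main burden is accurate; the cascade $\mathcal{A}_{22},\mathcal{B}_{22},\dots,\mathcal{A}_{99},\mathcal{B}_{99},\widetilde{\mathcal{A}},\widetilde{\mathcal{B}}$ arises precisely from matching the free parameters in $\mathcal{X}$ and $\mathcal{Y}$ (coming from the first layer) against the constraints imposed by the $\mathcal{V}_2,\mathcal{T}_2$ appearing in the $\mathcal{W}$-reconciliation, and each such matching is again an instance of \lemref{lma 2.3} applied to an equation of the form $\mathcal{A}_{kk}*_N(\cdot)+(\cdot)*_M\mathcal{B}_{kk}=\mathcal{E}_{kk}-\mathcal{C}_{kk}*_N(\cdot)*_M\mathcal{D}_{kk}$.
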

where $\mathcal{W}_{i}$, $\mathcal{K}_{i}$\, $\mathcal{H}_{jk}$ $(i,k=\overline{1,3},\ j=\overline{1,4})$  are arbitrary with suitable orders.

\section{\textbf{Algebraic solvability  conditions and  general solution to $\mathbf{(1.5)}$}}
In the following Proposition, we provide a proper extension of the  tensor equation \eqref{1.1}, which plays an essential role in the proof-findings process. Precisely, we derive the solvability conditions and the general solution to  \eqref{3.1}
\begin{proposition}\label{lma 2.3b} Let $\mathcal{A}_{1} \in \mathbb{H}^{I(N)\times J(N)}$,\ $\mathcal{A}_{2} \in \mathbb{H}^{I(N)\times G(N)}$,\ $\mathcal{B}_{1} \in \mathbb{H}^{K(M)\times L(M)}$,\ $\mathcal{B}_{2} \in \mathbb{H}^{H(M)\times L(M)}$,\ $\mathcal{C}_{3} \in \mathbb{H}^{G(N)\times Q(N)}$,\ $\mathcal{C}_{4} \in \mathbb{H}^{G(N)\times T(N)}$,\ $\mathcal{D}_{3} \in \mathbb{H}^{S(M)\times K(M)}$,\ $\mathcal{D}_{4} \in \mathbb{H}^{P(M)\times K(M)}$\ and $\mathcal{E}_{1} \in \mathbb{H}^{I(N)\times L(M)}$ be given. Set
\begin{small}
\begin{align}
\label{system 2221}
&\mathcal{M}_{1}=\mathcal{R}_{\mathcal{A}_{1}}*_{N}\mathcal{A}_{2},\ \mathcal{N}_{1}=\mathcal{B}_{2}*_{M}\mathcal{L}_{\mathcal{B}_{1}},\ \mathcal{S}_{1}=\mathcal{A}_{2}*_{N}\mathcal{L}_{\mathcal{M}_{1}},\
\mathcal{\widehat{A}}_{1}=\mathcal{M}_{1}*_{N}\mathcal{C}_{3},\\
\label{system 2222}
&\mathcal{\widehat{A}}_{2}=\mathcal{M}_{1}*_{N}\mathcal{C}_{4},\
\mathcal{\widehat{B}}_{1}=\mathcal{D}_{3}*_{M}\mathcal{B}_{1}*_{M}\mathcal{L}_{\mathcal{B}_{2}},\
\mathcal{\widehat{B}}_{2}=\mathcal{D}_{4}*_{M}\mathcal{B}_{1}*_{M}\mathcal{L}_{\mathcal{B}_{2}},\\
\label{system 2223}
&\mathcal{\widehat{M}}_{1}=\mathcal{R}_{\mathcal{\widehat{A}}_{1}}*_{N}\mathcal{\widehat{A}}_{2},\ \mathcal{\widehat{N}}_{1}=\mathcal{\widehat{B}}_{2}*_{M}\mathcal{L}_{\mathcal{\widehat{B}}_{1}},\  \mathcal{\widehat{S}}_{1}=\mathcal{\widehat{A}}_{2}*_{N}\mathcal{L}_{\mathcal{\widehat{M}}_{1}},\
\mathcal{\widehat{E}}_{1}=\mathcal{R}_{\mathcal{A}_{1}}*_{N}\mathcal{E}_{1}\\
\label{system 22241}
&*_{M}\mathcal{L}_{\mathcal{B}_{2}},\ \mathcal{\grave{E}}_{1}=\mathcal{E}_{1}-\mathcal{A}_{2}*_{N}(\mathcal{C}_{3}*_{N}\mathcal{X}_{3}*_{M}\mathcal{D}_{3}
+\mathcal{C}_{4}*_{N}\mathcal{W}*_{M}\mathcal{D}_{4})*_{M}\mathcal{B}_{1}.
\end{align}
\end{small}
Then the following statements are equivalent:\\
(1) \eqref{3.1} is solvable.\\
(2) The conditions
\begin{small}
\begin{align*}
\begin{gathered}
\mathcal{R}_{\mathcal{M}_{1}}*_{N}\mathcal{R}_{\mathcal{A}_{1}}*_{N}\mathcal{E}_{1}=0,\ \mathcal{E}_{1}*_{M}\mathcal{L}_{\mathcal{B}_{1}}*_{M}\mathcal{L}_{\mathcal{N}_{1}}=0,\
\mathcal{R}_{\mathcal{A}_{2}}*_{N}\mathcal{E}_{1}*_{M}\mathcal{L}_{\mathcal{B}_{1}}=0\
\end{gathered}
\end{align*}
\end{small}
 are satisfying  and there exist quaternion tensors $\mathcal{X}_{3}$ and $\mathcal{W}$ satisfy
\begin{small}
\begin{align*}
\mathcal{\widehat{A}}_{1}*_{N}\mathcal{X}_{3}*_{M}\mathcal{\widehat{B}}_{1}
+\mathcal{\widehat{A}}_{2}*_{N}\mathcal{W}*_{M}\mathcal{\widehat{B}}_{2}=\mathcal{\widehat{E}}_{1}.
\end{align*}
\end{small}
(3)
\begin{small}
\begin{align*}
\begin{gathered}
\mathcal{R}_{\mathcal{M}_{1}}*_{N}\mathcal{R}_{\mathcal{A}_{1}}*_{N}\mathcal{E}_{1}=0,\ \mathcal{E}_{1}*_{M}\mathcal{L}_{\mathcal{B}_{1}}*_{M}\mathcal{L}_{\mathcal{N}_{1}}=0,\
\mathcal{R}_{\mathcal{A}_{2}}*_{N}\mathcal{E}_{1}*_{M}\mathcal{L}_{\mathcal{B}_{1}}=0,\\
\mathcal{R}_{\mathcal{\widehat{M}}_{1}}*_{N}\mathcal{R}_{\mathcal{\widehat{A}}_{1}}*_{N}\mathcal{\widehat{E}}_{1}=0,\ \mathcal{\widehat{E}}_{1}*_{M}\mathcal{L}_{\mathcal{\widehat{B}}_{1}}*_{M}\mathcal{L}_{\mathcal{\widehat{N}}_{1}}=0,\\ \mathcal{R}_{\mathcal{\widehat{A}}_{1}}*_{N}\mathcal{\widehat{E}}_{1}*_{M}\mathcal{L}_{\mathcal{\widehat{B}}_{2}}=0,\
\mathcal{R}_{\mathcal{\widehat{A}}_{2}}*_{N}\mathcal{\widehat{E}}_{1}*_{M}\mathcal{L}_{\mathcal{\widehat{B}}_{1}}=0.
\end{gathered}
\end{align*}
\end{small}
In that case, the general solution to  \eqref{3.1} can be expressed as follows:
\begin{small}
\begin{subequations}
\begin{align*}
&\begin{array}{l}
 \mathcal{X}_{1}=\mathcal{A}_{1}^{\dagger}*_{N}\mathcal{\grave{E}}_{1}*_{M}\mathcal{B}_{1}^{\dagger}
-\mathcal{A}_{1}^{\dagger}*_{N}\mathcal{A}_{2}*_{N}\mathcal{M}_{1}^{\dagger}*_{N}\mathcal{\grave{E}}_{1}*_{M}\mathcal{B}_{1}^{\dagger}
-\mathcal{A}_{1}^{\dagger}*_{N}\mathcal{S}_{1}*_{N}\mathcal{A}_{2}^{\dagger}\\
\ \ \ \ \ *_{N}\mathcal{\grave{E}}_{1}*_{M}\mathcal{N}_{1}^{\dagger}*_{M}\mathcal{B}_{2}*_{M}\mathcal{B}_{1}^{\dagger}-\mathcal{A}_{1}^{\dagger}*_{N}\mathcal{S}_{1}
*_{N}\mathcal{U}_{2}*_{M}\mathcal{R}_{\mathcal{N}_{1}}*_{M}\mathcal{B}_{2}*_{M}\mathcal{B}_{1}^{\dagger}
\\
\ \ \ \ \ +\mathcal{L}_{\mathcal{A}_{1}}*_{N}\mathcal{U}_{4}+\mathcal{U}_{5}*_{M}\mathcal{R}_{\mathcal{B}_{1}},
 \end{array}  \\
&\begin{array}{l}
 \mathcal{X}_{2}=\mathcal{M}_{1}^{\dagger}*_{N}\mathcal{\grave{E}}_{1}*_{M}\mathcal{B}_{2}^{\dagger}
 +\mathcal{S}_{1}^{\dagger}*_{N}\mathcal{S}_{1}*_{N}\mathcal{A}_{2}^{\dagger}
*_{N}\mathcal{\grave{E}}*_{M}\mathcal{N}_{1}^{\dagger}+\mathcal{L}_{\mathcal{M}_{1}}*_{N}\mathcal{L}_{\mathcal{S}_{1}}\\
\ \ \ \ \ \ *_{N}\mathcal{U}_{1}+\mathcal{L}_{\mathcal{M}_{1}}*_{N}\mathcal{U}_{2}*_{M}\mathcal{R}_{\mathcal{N}_{1}}+\mathcal{U}_{3}*_{M}\mathcal{R}_{\mathcal{B}_{2}},
 \end{array}\\
 &\begin{array}{l}
 \mathcal{X}_{3}=\mathcal{\widehat{A}}_{1}^{\dagger}*_{N}\mathcal{\widehat{E}}_{1}*_{M}\mathcal{\widehat{B}}_{1}^{\dagger}
-\mathcal{\widehat{A}}_{1}^{\dagger}*_{N}\mathcal{\widehat{A}}_{2}*_{N}\mathcal{\widehat{M}}_{1}^{\dagger}*_{N}\mathcal{\widehat{E}}_{1}
*_{M}\mathcal{\widehat{B}}_{1}^{\dagger}
-\mathcal{\widehat{A}}_{1}^{\dagger}*_{N}\mathcal{\widehat{S}}_{1}*_{N}\mathcal{\widehat{A}}_{2}^{\dagger}\\
\ \ \ \ \*_{N}\mathcal{\widehat{E}}_{1} *_{M}\mathcal{\widehat{N}}_{1}^{\dagger}*_{M}\mathcal{\widehat{B}}_{2}*_{M}\mathcal{\widehat{B}}_{1}^{\dagger}-\mathcal{\widehat{A}}_{1}^{\dagger}
*_{N}\mathcal{\widehat{S}}_{1}
*_{N}\mathcal{\widehat{U}}_{2}*_{M}\mathcal{R}_{\mathcal{\widehat{N}}_{1}}*_{M}\mathcal{\widehat{B}}_{2}*_{M}\mathcal{\widehat{B}}_{1}^{\dagger}
\\
\ \ \ \ \ +\mathcal{L}_{\mathcal{\widehat{A}}_{1}}*_{N}\mathcal{\widehat{U}}_{4}+\mathcal{\widehat{U}}_{5}*_{M}\mathcal{R}_{\mathcal{\widehat{B}}_{1}},
 \end{array}\\
&\begin{array}{l}
 \mathcal{W}=\mathcal{\widehat{M}}_{1}^{\dagger}*_{N}\mathcal{\widehat{E}}_{1}*_{M}\mathcal{\widehat{B}}_{2}^{\dagger}
 +\mathcal{\widehat{S}}_{1}^{\dagger}*_{N}\mathcal{\widehat{S}}_{1}*_{N}\mathcal{\widehat{A}}_{2}^{\dagger}
*_{N}\mathcal{\widehat{E}}*_{M}\mathcal{\widehat{N}}_{1}^{\dagger}+\mathcal{L}_{\mathcal{\widehat{M}}_{1}}
*_{N}\mathcal{L}_{\mathcal{\widehat{S}}_{1}}\\
\ \ \ \ \ \ *_{N}\mathcal{\widehat{U}}_{1}+\mathcal{L}_{\mathcal{\widehat{M}}_{1}}*_{N}\mathcal{\widehat{U}}_{2}*_{M}\mathcal{R}_{\mathcal{\widehat{N}}_{1}}
+\mathcal{\widehat{U}}_{3}*_{M}\mathcal{R}_{\mathcal{\widehat{B}}_{2}},
 \end{array}
\end{align*}
\end{subequations}
\end{small}
where $\mathcal{U}_{i},$ $\mathcal{\widehat{U}}_{i}$ $(i=\overline{1,5})$ are arbitrary  tensors with suitable orders.
\end{proposition}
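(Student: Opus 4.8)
The plan is to derive \eqref{3.1} from two nested applications of \lemref{lma 2.3} (the solvability criterion for the two–term equation \eqref{1.1}). First I would freeze $\mathcal{X}_{3}$ and $\mathcal{W}$. Then \eqref{3.1} reads $\mathcal{A}_{1}*_{N}\mathcal{X}_{1}*_{M}\mathcal{B}_{1}+\mathcal{A}_{2}*_{N}\mathcal{X}_{2}*_{M}\mathcal{B}_{2}=\mathcal{\grave{E}}_{1}$ with $\mathcal{\grave{E}}_{1}$ as in \eqref{system 22241}, which is exactly \eqref{1.1} under the dictionary $(\mathcal{A},\mathcal{B},\mathcal{C},\mathcal{D},\mathcal{E})\mapsto(\mathcal{A}_{1},\mathcal{B}_{1},\mathcal{A}_{2},\mathcal{B}_{2},\mathcal{\grave{E}}_{1})$, whose auxiliary tensors from \lemref{lma 2.3} are $\mathcal{P}=\mathcal{M}_{1}$, $\mathcal{Q}=\mathcal{N}_{1}$, $\mathcal{S}=\mathcal{S}_{1}$ of \eqref{system 2221}. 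Hence, for fixed $\mathcal{X}_{3},\mathcal{W}$, equation \eqref{3.1} is solvable in $\mathcal{X}_{1},\mathcal{X}_{2}$ iff the four conditions of \lemref{lma 2.3}, written for $\mathcal{\grave{E}}_{1}$, hold; and when they do, $\mathcal{X}_{1},\mathcal{X}_{2}$ are obtained by transcribing the two solution formulas of \lemref{lma 2.3} under this dictionary — these are precisely the stated $\mathcal{X}_{1},\mathcal{X}_{2}$ (still carrying $\mathcal{X}_{3},\mathcal{W}$ through $\mathcal{\grave{E}}_{1}$).

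The second step is to clean up those four conditions. Since $\mathcal{\grave{E}}_{1}$ differs from $\mathcal{E}_{1}$ only by a term of the shape $\mathcal{A}_{2}*_{N}(\cdots)*_{M}\mathcal{B}_{1}$, the cancellations $\mathcal{B}_{1}*_{M}\mathcal{L}_{\mathcal{B}_{1}}=0$, $\mathcal{R}_{\mathcal{A}_{2}}*_{N}\mathcal{A}_{2}=0$ and $\mathcal{R}_{\mathcal{M}_{1}}*_{N}\mathcal{R}_{\mathcal{A}_{1}}*_{N}\mathcal{A}_{2}=\mathcal{R}_{\mathcal{M}_{1}}*_{N}\mathcal{M}_{1}=0$ turn three of the four conditions into the $\mathcal{X}_{3},\mathcal{W}$–free conditions $\mathcal{R}_{\mathcal{M}_{1}}*_{N}\mathcal{R}_{\mathcal{A}_{1}}*_{N}\mathcal{E}_{1}=0$, $\mathcal{E}_{1}*_{M}\mathcal{L}_{\mathcal{B}_{1}}*_{M}\mathcal{L}_{\mathcal{N}_{1}}=0$, $\mathcal{R}_{\mathcal{A}_{2}}*_{N}\mathcal{E}_{1}*_{M}\mathcal{L}_{\mathcal{B}_{1}}=0$ appearing in (2). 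The remaining condition $\mathcal{R}_{\mathcal{A}_{1}}*_{N}\mathcal{\grave{E}}_{1}*_{M}\mathcal{L}_{\mathcal{B}_{2}}=0$, after using $\mathcal{R}_{\mathcal{A}_{1}}*_{N}\mathcal{A}_{2}=\mathcal{M}_{1}$ and unfolding the definitions \eqref{system 2221}--\eqref{system 22241}, becomes exactly $\mathcal{\widehat{A}}_{1}*_{N}\mathcal{X}_{3}*_{M}\mathcal{\widehat{B}}_{1}+\mathcal{\widehat{A}}_{2}*_{N}\mathcal{W}*_{M}\mathcal{\widehat{B}}_{2}=\mathcal{\widehat{E}}_{1}$. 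Because the three clean conditions no longer involve $\mathcal{X}_{3},\mathcal{W}$, the existential quantifier over $(\mathcal{X}_{3},\mathcal{W})$ may be pulled past them, which yields (1)$\Leftrightarrow$(2).

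For (2)$\Leftrightarrow$(3) I would observe that $\mathcal{\widehat{A}}_{1}*_{N}\mathcal{X}_{3}*_{M}\mathcal{\widehat{B}}_{1}+\mathcal{\widehat{A}}_{2}*_{N}\mathcal{W}*_{M}\mathcal{\widehat{B}}_{2}=\mathcal{\widehat{E}}_{1}$ is once more an instance of \eqref{1.1}, now under $(\mathcal{A},\mathcal{B},\mathcal{C},\mathcal{D},\mathcal{E})\mapsto(\mathcal{\widehat{A}}_{1},\mathcal{\widehat{B}}_{1},\mathcal{\widehat{A}}_{2},\mathcal{\widehat{B}}_{2},\mathcal{\widehat{E}}_{1})$, whose auxiliary tensors are $\mathcal{\widehat{M}}_{1},\mathcal{\widehat{N}}_{1},\mathcal{\widehat{S}}_{1}$ of \eqref{system 2223}; a second application of \lemref{lma 2.3} converts its solvability into the four further conditions of (3) and delivers the general $\mathcal{X}_{3},\mathcal{W}$ stated in the proposition. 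Substituting those $\mathcal{X}_{3},\mathcal{W}$ into $\mathcal{\grave{E}}_{1}$ and combining with the $\mathcal{X}_{1},\mathcal{X}_{2}$ produced in the first step completes the description of the general solution. I expect the only genuine work to be the bookkeeping of the second step — checking that each of the three conditions truly sheds its $\mathcal{X}_{3},\mathcal{W}$–dependence and that the fourth condition collapses precisely onto the auxiliary Sylvester equation — since the rest is mechanical transcription of \lemref{lma 2.3}.
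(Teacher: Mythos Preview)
Your proposal is correct and follows essentially the same approach as the paper's proof: two nested applications of \lemref{lma 2.3}, first to $\mathcal{A}_{1}*_{N}\mathcal{X}_{1}*_{M}\mathcal{B}_{1}+\mathcal{A}_{2}*_{N}\mathcal{X}_{2}*_{M}\mathcal{B}_{2}=\mathcal{\grave{E}}_{1}$ and then to the residual equation $\mathcal{\widehat{A}}_{1}*_{N}\mathcal{X}_{3}*_{M}\mathcal{\widehat{B}}_{1}+\mathcal{\widehat{A}}_{2}*_{N}\mathcal{W}*_{M}\mathcal{\widehat{B}}_{2}=\mathcal{\widehat{E}}_{1}$. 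In fact your write-up is slightly more explicit than the paper's, since you spell out the cancellations $\mathcal{R}_{\mathcal{M}_{1}}*_{N}\mathcal{R}_{\mathcal{A}_{1}}*_{N}\mathcal{A}_{2}=0$, $\mathcal{R}_{\mathcal{A}_{2}}*_{N}\mathcal{A}_{2}=0$, $\mathcal{B}_{1}*_{M}\mathcal{L}_{\mathcal{B}_{1}}=0$ that strip the $\mathcal{X}_{3},\mathcal{W}$--dependence from three of the four conditions, whereas the paper simply asserts that \eqref{3.91} is equivalent to \eqref{3.93}.
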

\begin{proof}$(1)\Leftrightarrow(2)$\ \ \  We, first, rewrite  the  tensor equation \eqref{3.1}  in the form
\begin{small}
\begin{align}
\label{3.81}
\begin{array}{rll}
&\mathcal{A}_{1}*_{N}\mathcal{X}_{1}*_{M}\mathcal{B}_{1}+\mathcal{A}_{2}*_{N}\mathcal{X}_{2}*_{M}\mathcal{B}_{2}=\mathcal{\grave{E}}_{1},
\end{array}
\end{align}
\end{small}
where $\mathcal{\grave{E}}_{1}$ gives by \eqref{system 22241}. By utilizing $Lemma$ $\ref{lma 2.3}$, we have that  \eqref{3.81} is solvable if and only if there exist quaternion tensors $\mathcal{X}_{3}$ and $\mathcal{W}$ satisfy the following conditions:
\begin{small}
\begin{align}
\label{3.91}
&\mathcal{R}_{\mathcal{M}_{1}}*_{N}\mathcal{R}_{\mathcal{A}_{1}}*_{N}\mathcal{\grave{E}}_{1}=0,\ \mathcal{\grave{E}}_{1}*_{M}\mathcal{L}_{\mathcal{B}_{1}}*_{M}\mathcal{L}_{\mathcal{N}_{1}}=0,\
\mathcal{R}_{\mathcal{A}_{2}}*_{N}\mathcal{\grave{E}}_{1}*_{M}\mathcal{L}_{\mathcal{B}_{1}}=0,
\end{align}
\begin{align}
\label{3.92}
&\mathcal{R}_{\mathcal{A}_{1}}*_{N}\mathcal{\grave{E}}_{1}*_{M}\mathcal{L}_{\mathcal{B}_{2}}=0.
\end{align}
\end{small}
In that case, the general solution to \eqref{3.81} can be expressed as
\begin{small}
\begin{subequations}
\begin{align*}
&\begin{array}{l}
 \mathcal{X}_{1}=\mathcal{A}_{1}^{\dagger}*_{N}\mathcal{\grave{E}}_{1}*_{M}\mathcal{B}_{1}^{\dagger}
-\mathcal{A}_{1}^{\dagger}*_{N}\mathcal{A}_{2}*_{N}\mathcal{M}_{1}^{\dagger}*_{N}\mathcal{\grave{E}}_{1}*_{M}\mathcal{B}_{1}^{\dagger}
-\mathcal{A}_{1}^{\dagger}*_{N}\mathcal{S}_{1}*_{N}\mathcal{A}_{2}^{\dagger}\\
\ \ \ \ \ *_{N}\mathcal{\grave{E}}_{1}*_{M}\mathcal{N}_{1}^{\dagger}*_{M}\mathcal{B}_{2}*_{M}\mathcal{B}_{1}^{\dagger}-\mathcal{A}_{1}^{\dagger}*_{N}\mathcal{S}_{1}
*_{N}\mathcal{U}_{2}*_{M}\mathcal{R}_{\mathcal{N}_{1}}*_{M}\mathcal{B}_{2}*_{M}\mathcal{B}_{1}^{\dagger}
\\
\ \ \ \ \ +\mathcal{L}_{\mathcal{A}_{1}}*_{N}\mathcal{U}_{4}+\mathcal{U}_{5}*_{M}\mathcal{R}_{\mathcal{B}_{1}},
 \end{array}\\
&\begin{array}{l}
 \mathcal{X}_{2}=\mathcal{M}_{1}^{\dagger}*_{N}\mathcal{\grave{E}}_{1}*_{M}\mathcal{B}_{2}^{\dagger}
 +\mathcal{S}_{1}^{\dagger}*_{N}\mathcal{S}_{1}*_{N}\mathcal{A}_{2}^{\dagger}
*_{N}\mathcal{\grave{E}}*_{M}\mathcal{N}_{1}^{\dagger}+\mathcal{L}_{\mathcal{M}_{1}}*_{N}\mathcal{L}_{\mathcal{S}_{1}}\\
\ \ \ \ \ \ *_{N}\mathcal{U}_{1}+\mathcal{L}_{\mathcal{M}_{1}}*_{N}\mathcal{U}_{2}*_{M}\mathcal{R}_{\mathcal{N}_{1}}+\mathcal{U}_{3}*_{M}\mathcal{R}_{\mathcal{B}_{2}}.
 \end{array}
\end{align*}
\end{subequations}
\end{small}
The conditions \eqref{3.91} are satisfied if and only if the  conditions
\begin{small}
\begin{align}
\label{3.93}
&\mathcal{R}_{\mathcal{M}_{1}}*_{N}\mathcal{R}_{\mathcal{A}_{1}}*_{N}\mathcal{E}_{1}=0,\ \mathcal{E}_{1}*_{M}\mathcal{L}_{\mathcal{B}_{1}}*_{M}\mathcal{L}_{\mathcal{N}_{1}}=0,\
\mathcal{R}_{\mathcal{A}_{2}}*_{N}\mathcal{E}_{1}*_{M}\mathcal{L}_{\mathcal{B}_{1}}=0
\end{align}
\end{small}
are satisfied, respectively. It is evident that the condition \eqref{3.92} satisfies if and only if there exist quaternion tensors $\mathcal{X}_{3}$ and $\mathcal{W}$ satisfy
\begin{small}
\begin{align}
\label{3.94}
\mathcal{\widehat{A}}_{1}*_{N}\mathcal{X}_{3}*_{M}\mathcal{\widehat{B}}_{1}
+\mathcal{\widehat{A}}_{2}*_{N}\mathcal{W}*_{M}\mathcal{\widehat{B}}_{2}=\mathcal{\widehat{E}}_{1}.
\end{align}
\end{small}
$(2)\Leftrightarrow(3)$\ \ \ By applying  $Lemma$ $\ref{lma 2.3}$, we have that   \eqref{3.94}  is solvable if and only if
\begin{small}
\begin{align}
\begin{gathered}
\mathcal{R}_{\mathcal{\widehat{M}}_{1}}*_{N}\mathcal{R}_{\mathcal{\widehat{A}}_{1}}*_{N}\mathcal{\widehat{E}}_{1}=0,\ \mathcal{\widehat{E}}_{1}*_{M}\mathcal{L}_{\mathcal{\widehat{B}}_{1}}*_{M}\mathcal{L}_{\mathcal{\widehat{N}}_{1}}=0,\\ \mathcal{R}_{\mathcal{\widehat{A}}_{1}}*_{N}\mathcal{\widehat{E}}_{1}*_{M}\mathcal{L}_{\mathcal{\widehat{B}}_{2}}=0,\
\mathcal{R}_{\mathcal{\widehat{A}}_{2}}*_{N}\mathcal{\widehat{E}}_{1}*_{M}\mathcal{L}_{\mathcal{\widehat{B}}_{1}}=0.
\end{gathered}
\end{align}
\end{small}
In that case, the general solution to \eqref{3.94} can be expressed as
\begin{small}
\begin{subequations}
\begin{align}
&\begin{array}{l}
 \mathcal{X}_{3}=\mathcal{\widehat{A}}_{1}^{\dagger}*_{N}\mathcal{\widehat{E}}_{1}*_{M}\mathcal{\widehat{B}}_{1}^{\dagger}
-\mathcal{\widehat{A}}_{1}^{\dagger}*_{N}\mathcal{\widehat{A}}_{2}*_{N}\mathcal{\widehat{M}}_{1}^{\dagger}*_{N}\mathcal{\widehat{E}}_{1}
*_{M}\mathcal{\widehat{B}}_{1}^{\dagger}
-\mathcal{\widehat{A}}_{1}^{\dagger}*_{N}\mathcal{\widehat{S}}_{1}*_{N}\mathcal{\widehat{A}}_{2}^{\dagger}\\
\ \ \ \ \ *_{N}\mathcal{\widehat{E}}_{1}*_{M}\mathcal{\widehat{N}}_{1}^{\dagger}*_{M}\mathcal{\widehat{B}}_{2}*_{M}\mathcal{\widehat{B}}_{1}^{\dagger}-\mathcal{\widehat{A}}_{1}^{\dagger}
*_{N}\mathcal{\widehat{S}}_{1}
*_{N}\mathcal{\widehat{U}}_{2}*_{M}\mathcal{R}_{\mathcal{\widehat{N}}_{1}}*_{M}\mathcal{\widehat{B}}_{2}*_{M}\mathcal{\widehat{B}}_{1}^{\dagger}
\\
\ \ \ \ \ +\mathcal{L}_{\mathcal{\widehat{A}}_{1}}*_{N}\mathcal{\widehat{U}}_{4}+\mathcal{\widehat{U}}_{5}*_{M}\mathcal{R}_{\mathcal{\widehat{B}}_{1}},
 \end{array}  \\
&\begin{array}{l}
 \mathcal{W}=\mathcal{\widehat{M}}_{1}^{\dagger}*_{N}\mathcal{\widehat{E}}_{1}*_{M}\mathcal{\widehat{B}}_{2}^{\dagger}
 +\mathcal{\widehat{S}}_{1}^{\dagger}*_{N}\mathcal{\widehat{S}}_{1}*_{N}\mathcal{\widehat{A}}_{2}^{\dagger}
*_{N}\mathcal{\widehat{E}}*_{M}\mathcal{\widehat{N}}_{1}^{\dagger}+\mathcal{L}_{\mathcal{\widehat{M}}_{1}}
*_{N}\mathcal{L}_{\mathcal{\widehat{S}}_{1}}\\
\ \ \ \ \ \ *_{N}\mathcal{\widehat{U}}_{1}+\mathcal{L}_{\mathcal{\widehat{M}}_{1}}*_{N}\mathcal{\widehat{U}}_{2}*_{M}\mathcal{R}_{\mathcal{\widehat{N}}_{1}}
+\mathcal{\widehat{U}}_{3}*_{M}\mathcal{R}_{\mathcal{\widehat{B}}_{2}},
 \end{array}
\end{align}
\end{subequations}
\end{small}
where $\mathcal{U}_{i}$ $\mathcal{\widehat{U}}_{i},$ $(i=\overline{1,5})$ are arbitrary tensors with suitable orders.
\end{proof}
\begin{corollary}\label{saaad1}
Set $\mathcal{A}_{2}=\mathcal{B}_{1}=I,$ in  \eqref{3.1}, we can obtain the solvability conditions and the general solution to the following tensor equation:
\begin{small}
\begin{align*}
\mathcal{A}_{1}*_{N}\mathcal{X}_{1}*_{M}+\mathcal{X}_{2}*_{M}\mathcal{B}_{2}
+\mathcal{C}_{3}*_{N}\mathcal{X}_{3}*_{M}\mathcal{D}_{3}
+\mathcal{C}_{4}*_{N}\mathcal{W}*_{M}\mathcal{D}_{4}=\mathcal{E}_{1}.
\end{align*}
\end{small}
\end{corollary}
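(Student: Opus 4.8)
The plan is to deduce Corollary~\ref{saaad1} as a direct specialization of Proposition~\ref{lma 2.3b}. First I would set $\mathcal{A}_{2}=\mathcal{I}$ and $\mathcal{B}_{1}=\mathcal{I}$ in \eqref{3.1}. Using $\mathcal{I}*_{N}\mathcal{B}=\mathcal{B}$ and $\mathcal{B}*_{M}\mathcal{I}=\mathcal{B}$, the three summands on the left of \eqref{3.1} collapse and the equation becomes
\begin{align*}
\mathcal{A}_{1}*_{N}\mathcal{X}_{1}+\mathcal{X}_{2}*_{M}\mathcal{B}_{2}
+\mathcal{C}_{3}*_{N}\mathcal{X}_{3}*_{M}\mathcal{D}_{3}
+\mathcal{C}_{4}*_{N}\mathcal{W}*_{M}\mathcal{D}_{4}=\mathcal{E}_{1},
\end{align*}
which is precisely the tensor equation in the statement. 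Hence everything reduces to transcribing the solvability conditions and the general solution of Proposition~\ref{lma 2.3b} under these two substitutions.

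Next I would simplify the auxiliary tensors \eqref{system 2221}--\eqref{system 22241}. Since $\mathcal{L}_{\mathcal{I}}=\mathcal{I}-\mathcal{I}^{\dagger}*_{N}\mathcal{I}=0$ and $\mathcal{R}_{\mathcal{I}}=0$, one finds $\mathcal{M}_{1}=\mathcal{R}_{\mathcal{A}_{1}}*_{N}\mathcal{A}_{2}=\mathcal{R}_{\mathcal{A}_{1}}$, $\mathcal{N}_{1}=\mathcal{B}_{2}*_{M}\mathcal{L}_{\mathcal{B}_{1}}=0$, and $\mathcal{S}_{1}=\mathcal{A}_{2}*_{N}\mathcal{L}_{\mathcal{M}_{1}}=\mathcal{L}_{\mathcal{R}_{\mathcal{A}_{1}}}=\mathcal{A}_{1}*_{N}\mathcal{A}_{1}^{\dagger}$ (the last equality because $\mathcal{R}_{\mathcal{A}_{1}}$ is Hermitian idempotent). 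Likewise $\mathcal{\widehat{A}}_{1}=\mathcal{R}_{\mathcal{A}_{1}}*_{N}\mathcal{C}_{3}$, $\mathcal{\widehat{A}}_{2}=\mathcal{R}_{\mathcal{A}_{1}}*_{N}\mathcal{C}_{4}$, $\mathcal{\widehat{B}}_{1}=\mathcal{D}_{3}*_{M}\mathcal{L}_{\mathcal{B}_{2}}$, $\mathcal{\widehat{B}}_{2}=\mathcal{D}_{4}*_{M}\mathcal{L}_{\mathcal{B}_{2}}$, while $\mathcal{\widehat{E}}_{1}$ is unchanged, and $\mathcal{\widehat{M}}_{1},\mathcal{\widehat{N}}_{1},\mathcal{\widehat{S}}_{1}$ are formed from these exactly as in \eqref{system 2223}. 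With $\mathcal{N}_{1}=0$ (so $\mathcal{R}_{\mathcal{N}_{1}}=\mathcal{L}_{\mathcal{N}_{1}}=\mathcal{I}$) and $\mathcal{L}_{\mathcal{B}_{1}}=0$, I would then check that in item~(3) of Proposition~\ref{lma 2.3b} the three conditions on $\mathcal{E}_{1}$ all reduce to $0=0$, leaving exactly
\begin{align*}
\mathcal{R}_{\mathcal{\widehat{M}}_{1}}*_{N}\mathcal{R}_{\mathcal{\widehat{A}}_{1}}*_{N}\mathcal{\widehat{E}}_{1}=0,\quad
\mathcal{\widehat{E}}_{1}*_{M}\mathcal{L}_{\mathcal{\widehat{B}}_{1}}*_{M}\mathcal{L}_{\mathcal{\widehat{N}}_{1}}=0,\quad
\mathcal{R}_{\mathcal{\widehat{A}}_{1}}*_{N}\mathcal{\widehat{E}}_{1}*_{M}\mathcal{L}_{\mathcal{\widehat{B}}_{2}}=0,\quad
\mathcal{R}_{\mathcal{\widehat{A}}_{2}}*_{N}\mathcal{\widehat{E}}_{1}*_{M}\mathcal{L}_{\mathcal{\widehat{B}}_{1}}=0
\end{align*}
as the solvability criterion. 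In the solution formulas the factors $\mathcal{B}_{1}^{\dagger}$, $\mathcal{A}_{2}^{\dagger}$, $\mathcal{R}_{\mathcal{N}_{1}}$ become $\mathcal{I}$, $\mathcal{\grave{E}}_{1}=\mathcal{E}_{1}-(\mathcal{C}_{3}*_{N}\mathcal{X}_{3}*_{M}\mathcal{D}_{3}+\mathcal{C}_{4}*_{N}\mathcal{W}*_{M}\mathcal{D}_{4})$, and $\mathcal{M}_{1},\mathcal{S}_{1},\mathcal{\widehat{A}}_{i},\mathcal{\widehat{B}}_{i}$ take their simplified values, yielding the explicit expressions for $\mathcal{X}_{1},\mathcal{X}_{2},\mathcal{X}_{3},\mathcal{W}$.

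I do not expect a genuine obstacle here: once Proposition~\ref{lma 2.3b} is in hand the argument is pure bookkeeping. The only point that calls for a little care is confirming that the three ``first-block'' solvability conditions of Proposition~\ref{lma 2.3b}(3) really degenerate to triviality; for this one uses $\mathcal{L}_{\mathcal{I}}=\mathcal{R}_{\mathcal{I}}=0$ together with the idempotency of $\mathcal{R}_{\mathcal{A}_{1}}$, which forces $\mathcal{R}_{\mathcal{M}_{1}}*_{N}\mathcal{R}_{\mathcal{A}_{1}}=(\mathcal{I}-\mathcal{R}_{\mathcal{A}_{1}})*_{N}\mathcal{R}_{\mathcal{A}_{1}}=0$. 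Everything else is a mechanical substitution into the statement of Proposition~\ref{lma 2.3b}.
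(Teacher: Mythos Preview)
Your proposal is correct and follows exactly the approach the paper intends: the corollary is stated immediately after Proposition~\ref{lma 2.3b} with no separate proof, as a direct specialization obtained by substituting $\mathcal{A}_{2}=\mathcal{B}_{1}=\mathcal{I}$. Your additional bookkeeping (checking that the first three solvability conditions degenerate because $\mathcal{L}_{\mathcal{I}}=\mathcal{R}_{\mathcal{I}}=0$ and $\mathcal{R}_{\mathcal{A}_{1}}$ is idempotent) is more than the paper supplies and is entirely sound.
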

\begin{theorem}\label{system 3.33AAt}
Consider the quaternion system of tensor equations  \eqref{1.5}, where
\begin{small}
\begin{align*}
&\mathcal{A}_{1} \in \mathbb{H}^{I(N)\times J(N)},\ \mathcal{A}_{2} \in \mathbb{H}^{I(N)\times Q(N)},\ \mathcal{A}_{3} \in \mathbb{H}^{I(N)\times P(N)},\
\mathcal{A}_{4} \in \mathbb{H}^{A(N)\times E(N)},\\
&\mathcal{A}_{5} \in \mathbb{H}^{C(N)\times V(N)},\ \mathcal{A}_{6} \in \mathbb{H}^{A(N)\times E(N)},\ \mathcal{A}_{7} \in \mathbb{H}^{A(N)\times G(N)},\ \mathcal{A}_{8} \in \mathbb{H}^{C(N)\times V(N)},\\
&\mathcal{A}_{9} \in \mathbb{H}^{C(N)\times G(N)},\
 \mathcal{B}_{1} \in \mathbb{H}^{L(M)\times K(M)},\ \mathcal{B}_{2} \in \mathbb{H}^{S(M)\times K(M)},\ \mathcal{B}_{3} \in \mathbb{H}^{T(M)\times K(M)},\\
&\mathcal{B}_{4} \in \mathbb{H}^{H(M)\times B(M)},\ \mathcal{B}_{5} \in \mathbb{H}^{U(M)\times D(M)},\ \mathcal{B}_{6} \in \mathbb{H}^{F(M)\times B(M)},\ \mathcal{B}_{7} \in \mathbb{H}^{H(M)\times B(M)},\\
&\mathcal{B}_{8} \in \mathbb{H}^{D(M)\times S(M)},\ \mathcal{B}_{9} \in \mathbb{H}^{U(M)\times R(M)},\
\mathcal{C}_{3} \in \mathbb{H}^{G(N)\times J(N)},\ \mathcal{C}_{4} \in \mathbb{H}^{G(N)\times P(N)},\\
&\mathcal{D}_{3} \in \mathbb{H}^{L(M)\times F(M)},\ \mathcal{D}_{4} \in \mathbb{H}^{T(M)\times F(M)},\
\mathcal{H}_{3} \in \mathbb{H}^{G(N)\times Q(N)},\ \mathcal{H}_{4} \in \mathbb{H}^{C(N)\times P(N)},\\
&\mathcal{J}_{3} \in \mathbb{H}^{S(M)\times D(M)},\ \mathcal{J}_{4} \in \mathbb{H}^{T(M)\times D(M)},\
\mathcal{E}_{1} \in \mathbb{H}^{I(N)\times K(M)},\ \mathcal{E}_{2} \in \mathbb{H}^{I(N)\times K(M)},\\
&\mathcal{E}_{3} \in \mathbb{H}^{I(N)\times T(M)},\ \mathcal{E}_{4} \in \mathbb{H}^{P(N)\times K(M)},\
\mathcal{E}_{5} \in \mathbb{H}^{A(N)\times F(M)},\ \mathcal{E}_{6} \in \mathbb{H}^{G(N)\times B(M)},\\
&\mathcal{E}_{7} \in \mathbb{H}^{C(N)\times D(M)},\ \mathcal{E}_{8} \in \mathbb{H}^{C(N)\times D(M)},\
\mathcal{E}_{9} \in \mathbb{H}^{A(N)\times B(M)},\ \mathcal{E}_{10} \in \mathbb{H}^{C(N)\times R(M)}
\end{align*}
\end{small}
 are given tensors over $\mathbb{H}$. Set
\begin{small}
\begin{subequations}
\begin{align}
\label{system 3.33A}
&\mathcal{\widehat{A}}_{6}=\mathcal{A}_{6}*_{N}\mathcal{L}_{\mathcal{A}_{4}},\ \mathcal{\widehat{B}}_{7}=\mathcal{R}_{\mathcal{B}_{4}}*_{M}\mathcal{B}_{7},\\
\label{system 3.33B}
&\mathcal{\widehat{E}}_{9}=\mathcal{E}_{9}-\mathcal{A}_{6}*_{N}\mathcal{A}_{4}^{\dagger}*_{N}\mathcal{E}_{5}*_{M}\mathcal{B}_{6}-\mathcal{A}_{7}*_{N}\mathcal{E}_{6}
*_{M}\mathcal{B}_{4}^{\dagger}*_{M}\mathcal{B}_{7},\\
\label{system 3.33C}
&\mathcal{M}_{11}=\mathcal{R}_{\mathcal{\widehat{A}}_{6}}*_{N}\mathcal{A}_{7},\ \mathcal{N}_{11}=\mathcal{\widehat{B}}_{7}*_{M}\mathcal{L}_{\mathcal{B}_{6}},\ \mathcal{S}_{11}=\mathcal{A}_{7}*_{N}\mathcal{L}_{\mathcal{M}_{11}},\\
\label{system 3.33D}
&\mathcal{\grave{E}}_{1}=\mathcal{\widehat{E}}_{9}-\mathcal{A}_{7}*_{N}(\mathcal{C}_{3}*_{N}\mathcal{X}_{3}*_{M}\mathcal{D}_{3}
+\mathcal{C}_{4}*_{N}\mathcal{W}*_{M}\mathcal{D}_{4})*_{M}\mathcal{B}_{6},\\
\label{system 3.33EE}
&\mathcal{\widehat{A}}_{4}=\mathcal{M}_{11}*_{N}\mathcal{C}_{3},\ \mathcal{\widehat{C}}_{4}=\mathcal{M}_{11}*_{N}\mathcal{C}_{4},\
\mathcal{\widehat{B}}_{4}=\mathcal{D}_{3}*_{M}\mathcal{B}_{6}*_{M}\mathcal{L}_{\mathcal{\widehat{B}}_{7}},\\
\label{system 3.33F}
&\mathcal{\widehat{D}}_{4}=\mathcal{D}_{4}*_{M}\mathcal{B}_{6}*_{M}\mathcal{L}_{\mathcal{\widehat{B}}_{7}},\
\mathcal{\widehat{P}}=\mathcal{R}_{\mathcal{\widehat{A}}_{6}}*_{N}\mathcal{\widehat{E}}_{9}*_{M}\mathcal{L}_{\mathcal{\widehat{B}}_{7}},\\
\label{system 3.33G}
&\mathcal{\widehat{A}}_{8}=\mathcal{A}_{8}*_{N}\mathcal{L}_{\mathcal{A}_{5}},\ \mathcal{\widehat{B}}_{9}=\mathcal{R}_{\mathcal{B}_{5}}*_{M}\mathcal{B}_{9},\\
\label{system 3.33H}
&\mathcal{\widehat{E}}_{10}=\mathcal{E}_{10}-\mathcal{A}_{8}*_{N}\mathcal{A}_{5}^{\dagger}*_{N}\mathcal{E}_{7}*_{M}\mathcal{B}_{8}-\mathcal{A}_{9}*_{N}\mathcal{E}_{8}
*_{M}\mathcal{B}_{5}^{\dagger}*_{M}\mathcal{B}_{9},\\
\label{system 3.33I}
&\mathcal{M}_{22}=\mathcal{R}_{\mathcal{\widehat{A}}_{8}}*_{N}\mathcal{A}_{9},\ \mathcal{N}_{22}=\mathcal{\widehat{B}}_{9}*_{M}\mathcal{L}_{\mathcal{B}_{8}},\ \mathcal{S}_{22}=\mathcal{A}_{9}*_{N}\mathcal{L}_{\mathcal{M}_{22}},\\
\label{system 3.33J}
&\mathcal{\grave{E}}_{2}=\mathcal{\widehat{E}}_{10}-\mathcal{A}_{9}*_{N}(\mathcal{H}_{3}*_{N}\mathcal{Y}_{3}*_{M}\mathcal{J}_{3}
+\mathcal{H}_{4}*_{N}\mathcal{W}*_{M}\mathcal{J}_{4})*_{M}\mathcal{B}_{8},\\
\label{system 3.33KKK}
&\mathcal{\widehat{A}}_{5}=\mathcal{M}_{22}*_{N}\mathcal{H}_{3},\ \mathcal{\widehat{C}}_{5}=\mathcal{M}_{22}*_{N}\mathcal{H}_{4},\
\mathcal{\widehat{B}}_{5}=\mathcal{J}_{3}*_{M}\mathcal{B}_{8}*_{M}\mathcal{L}_{\mathcal{\widehat{B}}_{9}},\\
\label{system 3.33L}
&\mathcal{\widehat{D}}_{5}=\mathcal{J}_{4}*_{M}\mathcal{B}_{8}*_{M}\mathcal{L}_{\mathcal{\widehat{B}}_{9}},\
\mathcal{\widehat{Q}}=\mathcal{R}_{\mathcal{\widehat{A}}_{8}}*_{N}\mathcal{\widehat{E}}_{10}*_{M}\mathcal{L}_{\mathcal{\widehat{B}}_{9}},\\
\label{system 3.33M}
&\mathcal{C}_{6}=\mathcal{\widehat{C}}_{4}*_{N}\mathcal{L}_{\mathcal{A}_{3}},\ \mathcal{D}_{6}=\mathcal{R}_{\mathcal{B}_{3}}*_{M}\mathcal{\widehat{D}}_{4},\
\mathcal{C}_{7}=\mathcal{\widehat{C}}_{5}*_{N}\mathcal{L}_{\mathcal{A}_{3}},\ \mathcal{D}_{7}=\mathcal{R}_{\mathcal{B}_{3}}*_{M}\mathcal{\widehat{D}}_{5},\\
\label{system 3.33N}
&\mathcal{G}=\mathcal{\widehat{P}}-\mathcal{\widehat{C}}_{4}*_{N}\mathcal{A}_{3}^{\dagger}*_{N}\mathcal{E}_{3}*_{M}\mathcal{\widehat{D}}_{4}
-\mathcal{\widehat{C}}_{4}*_{N}\mathcal{L}_{\mathcal{A}_{3}}*_{N}\mathcal{E}_{4}*_{M}\mathcal{B}_{3}^{\dagger}*_{M}\mathcal{\widehat{D}}_{4},\\
\label{system 3.33O}
&\mathcal{F}=\mathcal{\widehat{Q}}-\mathcal{\widehat{C}}_{5}*_{N}\mathcal{A}_{3}^{\dagger}*_{N}\mathcal{E}_{3}*_{M}\mathcal{\widehat{D}}_{5}
-\mathcal{\widehat{C}}_{5}*_{N}\mathcal{L}_{\mathcal{A}_{3}}*_{N}\mathcal{E}_{4}*_{M}\mathcal{B}_{3}^{\dagger}*_{M}\mathcal{\widehat{D}}_{5},\\
\label{system 3.33P}
&\mathcal{M}_{1}=\mathcal{R}_{\mathcal{\widehat{A}}_{4}}*_{M}\mathcal{C}_{6},\ \mathcal{N}_{1}=\mathcal{D}_{6}*_{M}\mathcal{L}_{\mathcal{\widehat{B}}_{4}},\
\mathcal{S}_{1}=\mathcal{C}_{6}*_{N}\mathcal{L}_{\mathcal{M}_{1}},\\
\label{system 3.33Q}
&\mathcal{M}_{2}=\mathcal{R}_{\mathcal{\widehat{A}}_{5}}*_{M}\mathcal{C}_{7},\ \mathcal{N}_{2}=\mathcal{D}_{7}*_{M}\mathcal{L}_{\mathcal{\widehat{B}}_{5}},\
\mathcal{S}_{2}=\mathcal{C}_{7}*_{N}\mathcal{L}_{\mathcal{M}_{2}},\\
\label{system 3.33R}
&\mathcal{A}_{11}=\begin{pmatrix}\mathcal{L}_{\mathcal{M}_{1}}*_{N}\mathcal{L}_{\mathcal{S}_{1}}& \mathcal{L}_{\mathcal{M}_{2}}*_{N}\mathcal{L}_{\mathcal{S}_{2}}\end{pmatrix},\
\mathcal{B}_{11}=\begin{pmatrix}\mathcal{R}_{\mathcal{D}_{6}}\\ \mathcal{R}_{\mathcal{D}_{7}}\end{pmatrix},\\
\label{system 3.33S}
&\begin{array}{l}
\mathcal{E}_{11}=\mathcal{M}_{2}^{\dagger}*_{N}\mathcal{F}*_{M}\mathcal{D}_{7}^{\dagger}
+\mathcal{S}_{2}^{\dagger}*_{N}\mathcal{S}_{2}*_{N}\mathcal{C}_{7}^{\dagger}
*_{N}\mathcal{F}*_{M}\mathcal{N}_{2}^{\dagger}
-\mathcal{M}_{1}^{\dagger}*_{N}\mathcal{G}*_{M}\mathcal{D}_{6}^{\dagger}\\
\ \ \ \ \ \ \ -\mathcal{S}_{1}^{\dagger}*_{N}\mathcal{S}_{1}*_{N}\mathcal{C}_{6}^{\dagger}*_{N}\mathcal{G}*_{M}\mathcal{N}_{1}^{\dagger},\
\mathcal{A}=\mathcal{R}_{\mathcal{A}_{11}}*_{N}\mathcal{L}_{\mathcal{M}_{1}},
\end{array}\\
\label{system 3.33T}
&
\mathcal{B}=\mathcal{R}_{\mathcal{N}_{1}}*_{M}\mathcal{L}_{\mathcal{B}_{11}},\
\mathcal{C}=\mathcal{R}_{\mathcal{A}_{11}}*_{N}\mathcal{L}_{\mathcal{M}_{2}},\
\mathcal{D}=\mathcal{R}_{\mathcal{N}_{2}}*_{M}\mathcal{L}_{\mathcal{B}_{11}},\\
\label{system 3.33U}
&\mathcal{E}=\mathcal{R}_{\mathcal{A}_{11}}*_{N}\mathcal{E}_{11}*_{M}\mathcal{L}_{\mathcal{B}_{11}},\
\mathcal{M}=\mathcal{R}_{\mathcal{A}}*_{N}\mathcal{C},\
\mathcal{N}=\mathcal{D}*_{M}\mathcal{L}_{\mathcal{B}},\
\mathcal{S}=\mathcal{C}*_{N}\mathcal{L}_{\mathcal{M}},\\
\label{system 3.33V}
&\mathcal{A}_{22}=\begin{pmatrix}\mathcal{L}_{\mathcal{A}_{1}}& \mathcal{L}_{\mathcal{\widehat{A}}_{4}}\end{pmatrix},\
\mathcal{B}_{22}=\begin{pmatrix}\mathcal{R}_{\mathcal{B}_{1}}\\ \mathcal{R}_{\mathcal{\widehat{B}}_{4}}\end{pmatrix},\
\mathcal{C}_{22}=\mathcal{\widehat{A}}_{4}^{\dagger}*_{N}\mathcal{S}_{1},\\
\label{system 3.33aa}
&\begin{array}{l}
\mathcal{D}_{22}=\mathcal{R}_{\mathcal{N}_{1}}*_{M}\mathcal{D}_{6}*_{N}\mathcal{\widehat{B}}_{4}^{\dagger},\
\mathcal{E}_{22}=\mathcal{\widehat{A}}_{4}^{\dagger}*_{N}\mathcal{G}*_{M}\mathcal{\widehat{B}}_{4}^{\dagger}
-\mathcal{A}_{1}^{\dagger}*_{N}\mathcal{E}_{1}*_{M}\mathcal{B}_{1}^{\dagger}
-\mathcal{\widehat{A}}_{4}^{\dagger}\\
 \ \ *_{N}\mathcal{S}_{1}*_{N}\mathcal{C}_{6}^{\dagger}*_{N}\mathcal{G}*_{M}\mathcal{N}_{1}*_{M}\mathcal{D}_{6}*_{M}\mathcal{\widehat{B}}_{4}^{\dagger}-\mathcal{\widehat{A}}_{4}^{\dagger}*_{N}\mathcal{C}_{6}*_{N}\mathcal{M}_{1}^{\dagger}
*_{N}\mathcal{G}*_{M}\mathcal{\widehat{B}}_{4}^{\dagger},
\end{array}\\
\label{system 3.33bb}
&\mathcal{A}_{33}=\mathcal{R}_{\mathcal{A}_{22}}*_{N}\mathcal{C}_{22},\
\mathcal{B}_{33}=\mathcal{D}_{22}*_{M}\mathcal{L}_{\mathcal{B}_{22}},\
\mathcal{E}_{33}=\mathcal{R}_{\mathcal{A}_{22}}*_{N}\mathcal{E}_{22}*_{M}\mathcal{L}_{\mathcal{B}_{22}},\\
\label{system 3.33cc}
&\mathcal{A}_{44}=\begin{pmatrix}\mathcal{L}_{\mathcal{A}_{2}}& \mathcal{L}_{\mathcal{\widehat{A}}_{5}}\end{pmatrix},\
\mathcal{B}_{44}=\begin{pmatrix}\mathcal{R}_{\mathcal{B}_{2}}\\ \mathcal{R}_{\mathcal{\widehat{B}}_{5}}\end{pmatrix},\
\mathcal{C}_{44}=\mathcal{\widehat{A}}_{5}^{\dagger}*_{N}\mathcal{S}_{2},\\
&\label{system 3.33dd}
\begin{array}{l}
\mathcal{D}_{44}=\mathcal{R}_{\mathcal{N}_{2}}*_{M}\mathcal{D}_{7}*_{N}\mathcal{\widehat{B}}_{5}^{\dagger},\ \mathcal{E}_{44}=\mathcal{\widehat{A}}_{5}^{\dagger}*_{N}\mathcal{F}*_{M}\mathcal{\widehat{B}}_{5}^{\dagger}
-\mathcal{A}_{2}^{\dagger}*_{N}\mathcal{E}_{2}*_{M}\mathcal{B}_{2}^{\dagger} -\mathcal{\widehat{A}}_{5}^{\dagger}\\
*_{N}\mathcal{S}_{2}*_{N}\mathcal{C}_{7}^{\dagger}*_{N}\mathcal{F}*_{M}\mathcal{N}_{2}*_{M}\mathcal{D}_{7}*_{M}\mathcal{\widehat{B}}_{5}^{\dagger}-\mathcal{\widehat{A}}_{5}^{\dagger}*_{N}\mathcal{C}_{7}*_{N}\mathcal{M}_{2}^{\dagger}*_{N}\mathcal{F}
*_{M}\mathcal{\widehat{B}}_{5}^{\dagger},
\end{array}
\end{align}
\end{subequations}
\vspace*{-\baselineskip}
\begin{subequations}
\begin{align}
\label{system 3.33ee}
&\mathcal{A}_{55}=\mathcal{R}_{\mathcal{A}_{44}}*_{N}\mathcal{C}_{44},\
\mathcal{B}_{55}=\mathcal{D}_{44}*_{M}\mathcal{L}_{\mathcal{B}_{44}},\
\mathcal{E}_{55}=\mathcal{R}_{\mathcal{A}_{44}}*_{N}\mathcal{E}_{44}*_{M}\mathcal{L}_{\mathcal{B}_{44}},\\
\label{system 3.33eee}
&\mathcal{A}_{66}=\begin{pmatrix}\mathcal{L}_{\mathcal{A}}& \mathcal{L}_{\mathcal{A}_{33}}\end{pmatrix},\
\mathcal{B}_{66}=\begin{pmatrix}\mathcal{R}_{\mathcal{B}}\\ \mathcal{R}_{\mathcal{B}_{33}}\end{pmatrix},\
\mathcal{C}_{66}=\mathcal{A}^{\dagger}*_{N}\mathcal{S},
\end{align}
\begin{align}
\label{system 3.33ff}
&\begin{array}{l}
\mathcal{D}_{66}=\mathcal{R}_{\mathcal{N}}*_{M}\mathcal{D}*_{M}\mathcal{B}^{\dagger},\ \mathcal{E}_{66}=\mathcal{A}_{33}^{\dagger}*_{N}\mathcal{E}_{33}*_{M}\mathcal{B}_{33}^{\dagger}
-\mathcal{A}^{\dagger}*_{N}\mathcal{E}*_{M}\mathcal{B}^{\dagger} +\mathcal{A}^{\dagger}\\
\ \ \ \*_{N}\mathcal{S}*_{N}\mathcal{C}^{\dagger}*_{N}\mathcal{E}*_{M}\mathcal{N}^{\dagger}*_{M}\mathcal{D}*_{N} \mathcal{B}^{\dagger}+\mathcal{A}^{\dagger}*_{N}\mathcal{C}*_{N}\mathcal{M}^{\dagger}*_{N}\mathcal{E}
*_{M}\mathcal{B}^{\dagger},
\end{array}\\
\label{system 3.33gg}
&\mathcal{A}_{77}=\mathcal{R}_{\mathcal{A}_{66}}*_{N}\mathcal{C}_{66},\
\mathcal{B}_{77}=\mathcal{D}_{66}*_{M}\mathcal{L}_{\mathcal{B}_{66}},\
\mathcal{E}_{77}=\mathcal{R}_{\mathcal{A}_{66}}*_{N}\mathcal{E}_{66}*_{M}\mathcal{L}_{\mathcal{B}_{66}},\\
\label{system 3.33hh}
&\mathcal{A}_{88}=\begin{pmatrix}\mathcal{L}_{\mathcal{M}}*_{N}\mathcal{L}_{\mathcal{S}}& \mathcal{L}_{\mathcal{A}_{55}}\end{pmatrix},\
\mathcal{B}_{88}=\begin{pmatrix}\mathcal{R}_{\mathcal{D}}\\ \mathcal{R}_{\mathcal{B}_{55}}\end{pmatrix},\
\mathcal{C}_{88}=\mathcal{L}_{\mathcal{M}},\
\mathcal{D}_{88}=\mathcal{R}_{\mathcal{N}},\\
\label{system 3.33II}
&\mathcal{E}_{88}=\mathcal{A}_{55}^{\dagger}*_{N}\mathcal{E}_{55}*_{M}\mathcal{B}_{55}^{\dagger}
-\mathcal{M}^{\dagger}*_{N}\mathcal{E}*_{M}\mathcal{B}^{\dagger}
-\mathcal{S}^{\dagger}*_{N}\mathcal{S}*_{N}\mathcal{C}^{\dagger}*_{N}\mathcal{E}*_{M}\mathcal{N}^{\dagger},\\
\label{system 3.33JJ}
&\mathcal{A}_{99}=\mathcal{R}_{\mathcal{A}_{88}}*_{N}\mathcal{C}_{88},\
\mathcal{B}_{99}=\mathcal{D}_{88}*_{M}\mathcal{L}_{\mathcal{B}_{88}},\
\mathcal{E}_{99}=\mathcal{R}_{\mathcal{A}_{88}}*_{N}\mathcal{E}_{88}*_{M}\mathcal{L}_{\mathcal{B}_{88}},\\
\label{system 3.33KK}
&\mathcal{\widetilde{A}}=\begin{pmatrix}\mathcal{L}_{\mathcal{A}_{77}}& -\mathcal{L}_{\mathcal{A}_{99}}\end{pmatrix},\
\mathcal{\widetilde{B}}=\begin{pmatrix}\mathcal{R}_{\mathcal{B}_{77}}\\ -\mathcal{R}_{\mathcal{B}_{99}}\end{pmatrix},\\
&\mathcal{\widetilde{E}}=\mathcal{A}_{77}^{\dagger}*_{N}\mathcal{E}_{77}*_{M}\mathcal{B}_{77}^{\dagger}
-\mathcal{A}_{99}^{\dagger}*_{N}\mathcal{E}_{99}*_{M}\mathcal{B}_{99}^{\dagger},\
\end{align}
\end{subequations}
\end{small}
then the system \eqref{1.5} is solvable  if and only if
\begin{small}
\begin{align}
\label{system 222a}
&\mathcal{R}_{\mathcal{A}_{4}}*_{N}\mathcal{E}_{5}=0,\ \mathcal{E}_{6}*_{M}\mathcal{L}_{\mathcal{B}_{4}}=0,\
\mathcal{R}_{\mathcal{A}_{5}}*_{N}\mathcal{E}_{7}=0,\ \mathcal{E}_{8}*_{M}\mathcal{L}_{\mathcal{B}_{5}}=0,\\
\label{system 222b}
&\mathcal{R}_{\mathcal{M}_{11}}*_{N}\mathcal{R}_{\mathcal{\widehat{A}}_{6}}*_{N}\mathcal{\widehat{E}}_{9}=0,\ \mathcal{\widehat{E}}_{9}*_{M}\mathcal{L}_{\mathcal{B}_{6}}*_{M}\mathcal{L}_{\mathcal{N}_{11}}=0,\
 \mathcal{R}_{\mathcal{A}_{7}}*_{N}\mathcal{\widehat{E}}_{9},\ *_{M}\mathcal{L}_{\mathcal{B}_{6}}\\
\label{system 222c}
&=0,\ \mathcal{R}_{\mathcal{M}_{22}}*_{N}\mathcal{R}_{\mathcal{\widehat{A}}_{8}}*_{N}\mathcal{\widehat{E}}_{10}=0,\ \mathcal{\widehat{E}}_{10}*_{M}\mathcal{L}_{\mathcal{B}_{8}}*_{M}\mathcal{L}_{\mathcal{N}_{22}}=0,\
 \mathcal{R}_{\mathcal{A}_{9}}*_{N}\mathcal{\widehat{E}}_{10}\\
\label{system 222ddd}
&*_{M}\mathcal{L}_{\mathcal{B}_{8}}=0,\ \mathcal{R}_{\mathcal{A}_{3}}*_{N}\mathcal{E}_{3}=0,\ \mathcal{E}_{4}*_{M}\mathcal{L}_{\mathcal{B}_{3}}=0,\
\mathcal{A}_{3}*_{N}\mathcal{E}_{3}=\mathcal{E}_{4}*_{M}\mathcal{B}_{3},\\
&\mathcal{R}_{\mathcal{\widehat{A}}_{4}}*_{N}\mathcal{G}*_{M}\mathcal{L}_{\mathcal{D}_{6}}=0,\
\mathcal{R}_{\mathcal{C}_{6}}*_{N}\mathcal{G}*_{M}\mathcal{L}_{\mathcal{\widehat{B}}_{4}}=0,\
\mathcal{R}_{\mathcal{S}_{1}}*_{N}\mathcal{R}_{\mathcal{\widehat{A}}_{4}}*_{N}\mathcal{G}=0,\\
&\mathcal{G}*_{M}\mathcal{L}_{\mathcal{\widehat{B}}_{4}}*_{M}\mathcal{L}_{\mathcal{N}_{1}}=0,\
\mathcal{R}_{\mathcal{\widehat{A}}_{5}}*_{N}\mathcal{F}*_{M}\mathcal{L}_{\mathcal{D}_{7}}=0,\
\mathcal{R}_{\mathcal{C}_{7}}*_{N}\mathcal{F}*_{M}\mathcal{L}_{\mathcal{\widehat{B}}_{5}}=0,\\
&\mathcal{R}_{\mathcal{S}_{2}}*_{N}\mathcal{R}_{\mathcal{\widehat{A}}_{5}}*_{N}\mathcal{F}=0,\
\mathcal{F}*_{M}\mathcal{L}_{\mathcal{\widehat{B}}_{5}}*_{M}\mathcal{L}_{\mathcal{N}_{2}}=0,\
\mathcal{R}_{\mathcal{M}}*_{N}\mathcal{R}_{\mathcal{A}}*_{N}\mathcal{E}=0,\\
&\mathcal{R}_{\mathcal{A}}*_{N}\mathcal{E}*_{M}\mathcal{L}_{\mathcal{D}}=0,\
\mathcal{E}*_{M}\mathcal{L}_{\mathcal{B}}*_{M}\mathcal{L}_{\mathcal{N}}=0,\
\mathcal{R}_{\mathcal{C}}*_{N}\mathcal{E}*_{M}\mathcal{L}_{\mathcal{B}}=0,\\
&\mathcal{R}_{\mathcal{A}_{1}}*_{N}\mathcal{E}_{1}=0,\ \mathcal{E}_{1}*_{M}\mathcal{L}_{\mathcal{B}_{1}}=0,\
\mathcal{R}{\mathcal{A}_{2}}*_{N}\mathcal{E}_{2}=0,\ \mathcal{E}_{2}*_{M}\mathcal{B}_{2}=0,\\
&\mathcal{R}_{\mathcal{A}_{33}}*_{N}\mathcal{E}_{33}=0,\ \mathcal{E}_{33}*_{M}\mathcal{L}_{\mathcal{B}_{33}}=0,\
\mathcal{R}{\mathcal{A}_{55}}*_{N}\mathcal{E}_{55}=0,\ \mathcal{E}_{55}*_{M}\mathcal{L}_{\mathcal{B}_{55}}=0,\\
&\mathcal{R}_{\mathcal{A}_{77}}*_{N}\mathcal{E}_{77}=0,\ \mathcal{E}_{77}*_{M}\mathcal{L}_{\mathcal{B}_{77}}=0,\
\mathcal{R}_{\mathcal{A}_{99}}*_{N}\mathcal{E}_{99}=0,\ \mathcal{E}_{99}*_{M}\mathcal{L}_{\mathcal{B}_{99}}=0,\\
\label{system 222eee}
&\mathcal{R}_{\mathcal{\widetilde{A}}}*_{N}\mathcal{\widetilde{E}}*_{M}\mathcal{L}_{\widetilde{B}}=0.
\end{align}
\end{small}
Under these conditions, the general solution to  \eqref{1.5} can be expressed as:
\begin{small}
\begin{align}
\label{system 2225}
&\mathcal{X}_{1}=\mathcal{A}_{4}^{\dagger}*_{N}\mathcal{E}_{5}+\mathcal{L}_{\mathcal{A}_{4}}*_{N}\mathcal{V}_{11},\\
\label{system 2226}
&\mathcal{X}_{2}=\mathcal{E}_{6}*_{M}\mathcal{B}_{4}^{\dagger}+\mathcal{V}_{22}*_{M}\mathcal{R}_{\mathcal{B}_{4}},\\
\label{system 2227}
&\mathcal{Y}_{1}=\mathcal{A}_{5}^{\dagger}*_{N}\mathcal{E}_{7}+\mathcal{L}_{\mathcal{A}_{5}}*_{N}\mathcal{V}_{33},\\
\label{system 2228}
& \mathcal{Y}_{2}=\mathcal{E}_{8}*_{M}\mathcal{B}_{5}^{\dagger}+\mathcal{V}_{44}*_{M}\mathcal{R}_{\mathcal{B}_{5}},\\
\label{system 2229}
&\mathcal{X}_{3}=\mathcal{A}_{1}^{\dagger}*_{N}\mathcal{E}_{1}*_{M}\mathcal{B}_{1}^{\dagger}
+\mathcal{L}_{\mathcal{A}_{1}}*_{N}\mathcal{U}_{1}+\mathcal{U}_{2}*_{M}\mathcal{R}_{\mathcal{B}_{1}},\\
\label{system 2230}
&\mathcal{Y}_{3}=\mathcal{A}_{2}^{\dagger}*_{N}\mathcal{E}_{2}*_{M}\mathcal{B}_{2}^{\dagger}
+\mathcal{L}_{\mathcal{A}_{2}}*_{N}\mathcal{U}_{3}+\mathcal{U}_{4}*_{M}\mathcal{R}_{\mathcal{B}_{2}},\\
\label{system 2231}
&\mathcal{W}=\mathcal{A}_{3}^{\dagger}*_{N}\mathcal{E}_{3}+\mathcal{L}_{\mathcal{A}_{3}}*_{N}\mathcal{E}_{4}*_{M}\mathcal{B}_{3}^{\dagger}
+\mathcal{L}_{\mathcal{A}_{3}}*_{N}\mathcal{U}_{5}*_{M}\mathcal{R}_{\mathcal{B}_{3}},
\end{align}
\end{small}
where
\begin{small}
\begin{subequations}
\begin{align}
\label{system 22a}
&\begin{array}{l}
 \mathcal{V}_{11}=\mathcal{\widehat{A}}_{6}^{\dagger}*_{N}\mathcal{\grave{E}}_{1}*_{M}\mathcal{B}_{6}^{\dagger}
-\mathcal{\widehat{A}}_{6}^{\dagger}*_{N}\mathcal{A}_{7}*_{N}\mathcal{M}_{11}^{\dagger}*_{N}\mathcal{\grave{E}}_{1}*_{M}\mathcal{B}_{6}^{\dagger}
-\mathcal{\widehat{A}}_{6}^{\dagger}*_{N}\mathcal{S}_{11}*_{N}\\
\ \ \ \ \ \mathcal{A}_{7}^{\dagger}*_{N}\mathcal{\grave{E}}_{1}*_{M}\mathcal{N}_{11}^{\dagger}*_{M}\mathcal{\widehat{B}}_{7}*_{M}\mathcal{B}_{6}^{\dagger}-\mathcal{\widehat{A}}_{6}^{\dagger}*_{N}\mathcal{S}_{11}
*_{N}\mathcal{T}_{21}*_{M}\mathcal{R}_{\mathcal{N}_{11}}*_{M}\mathcal{\widehat{B}}_{7}\\
\ \ \ \ \ *_{M}\mathcal{B}_{6}^{\dagger}+\mathcal{L}_{\mathcal{\widehat{A}}_{6}}*_{N}\mathcal{T}_{41}+\mathcal{T}_{51}*_{M}\mathcal{R}_{\mathcal{B}_{6}},
 \end{array}
 \end{align}
\begin{align}
\label{system 22b}
&\begin{array}{l}
 \mathcal{V}_{22}=\mathcal{M}_{11}^{\dagger}*_{N}\mathcal{\grave{E}}_{1}*_{M}\mathcal{\widehat{B}}_{7}^{\dagger}
 +\mathcal{S}_{11}^{\dagger}*_{N}\mathcal{S}_{11}*_{N}\mathcal{A}_{7}^{\dagger}
*_{N}\mathcal{\grave{E}}_{1}*_{M}\mathcal{N}_{11}^{\dagger}+\mathcal{L}_{\mathcal{M}_{11}}*_{N}\\
\ \ \ \ \ \ \mathcal{L}_{\mathcal{S}_{11}}*_{N}\mathcal{T}_{11}+\mathcal{L}_{\mathcal{M}_{11}}*_{N}\mathcal{T}_{21}*_{M}\mathcal{R}_{\mathcal{N}_{11}}+\mathcal{T}_{31}*_{M}\mathcal{R}_{\mathcal{\widehat{B}}_{7}},
 \end{array}\\
&\begin{array}{l}
\label{system 22c}
 \mathcal{V}_{33}=\mathcal{\widehat{A}}_{8}^{\dagger}*_{N}\mathcal{\grave{E}}_{2}*_{M}\mathcal{B}_{8}^{\dagger}
-\mathcal{\widehat{A}}_{8}^{\dagger}*_{N}\mathcal{A}_{9}*_{N}\mathcal{M}_{22}^{\dagger}*_{N}\mathcal{\grave{E}}_{2}*_{M}\mathcal{B}_{8}^{\dagger}
-\mathcal{\widehat{A}}_{8}^{\dagger}*_{N}\mathcal{S}_{22}\\
\ \ \ \ \ *_{N}\mathcal{A}_{9}^{\dagger}*_{N}\mathcal{\grave{E}}_{2}*_{M}\mathcal{N}_{22}^{\dagger}*_{M}\mathcal{\widehat{B}}_{9}*_{M}\mathcal{B}_{8}^{\dagger}-\mathcal{\widehat{A}}_{8}^{\dagger}*_{N}\mathcal{S}_{22}
*_{N}\mathcal{J}_{21}*_{M}\mathcal{R}_{\mathcal{N}_{22}}*_{M}\\
\ \ \ \ \ \mathcal{\widehat{B}}_{9}*_{M}\mathcal{B}_{8}^{\dagger}
+\mathcal{L}_{\mathcal{\widehat{A}}_{8}}*_{N}\mathcal{J}_{41}+\mathcal{J}_{51}*_{M}\mathcal{R}_{\mathcal{B}_{8}},
 \end{array}\\
\label{system 22d}
&\begin{array}{l}
 \mathcal{V}_{44}=\mathcal{M}_{22}^{\dagger}*_{N}\mathcal{\grave{E}}_{2}*_{M}\mathcal{\widehat{B}}_{9}^{\dagger}
 +\mathcal{S}_{22}^{\dagger}*_{N}\mathcal{S}_{22}*_{N}\mathcal{A}_{9}^{\dagger}
*_{N}\mathcal{\grave{E}}_{2}*_{M}\mathcal{N}_{22}^{\dagger}+\mathcal{L}_{\mathcal{M}_{22}}*_{N}\\
\ \ \ \ \ \ \mathcal{L}_{\mathcal{S}_{22}}*_{N}\mathcal{J}_{11}+\mathcal{L}_{\mathcal{M}_{22}}*_{N}\mathcal{J}_{21}*_{M}\mathcal{R}_{\mathcal{N}_{22}}+\mathcal{J}_{31}*_{M}\mathcal{R}_{\mathcal{\widehat{B}}_{9}},
 \end{array}\\
\label{system 2222eeee}
&\begin{array}{l}
 \mathcal{U}_{1}=\begin{pmatrix}\mathcal{I}\ \  0\end{pmatrix}*_{N}(\mathcal{A}_{22}^{\dagger}*_{N}
(\mathcal{E}_{22}-\mathcal{C}_{22}*_{N}\mathcal{V}_{2}*_{M}\mathcal{D}_{22})
-\mathcal{A}_{22}^{\dagger}*_{N}\mathcal{H}_{12}*_{M}\mathcal{B}_{22}\\
 \ \ \ \ \ \ +\mathcal{L}_{\mathcal{A}_{22}}*_{N}\mathcal{H}_{11}),
 \end{array}  \\
\label{system 22f}
&\begin{array}{l}
 \mathcal{U}_{2}=(\mathcal{R}_{\mathcal{A}_{22}}*_{N}(\mathcal{E}_{22}-\mathcal{C}_{22}*_{N}\mathcal{V}_{2}*_{M}\mathcal{D}_{22})
*_{M}\mathcal{B}_{22}^{\dagger}+\mathcal{A}_{22}*_{N}\mathcal{A}_{22}^{\dagger}*_{N}\mathcal{H}_{12}\\
\ \ \ \ \ \  \mathcal{H}_{13}*_{M}\mathcal{R}_{\mathcal{B}_{22}})*_{M}\begin{pmatrix}\mathcal{I}\\ 0\end{pmatrix},
 \end{array}\\
 \label{system 22g}
 &\begin{array}{l}
 \mathcal{U}_{3}=\begin{pmatrix}\mathcal{I}& 0\end{pmatrix}*_{N}(\mathcal{A}_{44}^{\dagger}*_{N}
(\mathcal{E}_{44}-\mathcal{C}_{44}*_{N}\mathcal{T}_{2}*_{M}\mathcal{D}_{44})
-\mathcal{A}_{44}^{\dagger}*_{N}\mathcal{H}_{22}*_{M}\mathcal{B}_{44}\\
\ \ \ \ \ \ +\mathcal{L}_{\mathcal{A}_{44}}*_{N}\mathcal{H}_{21}),
 \end{array}\\
 \label{system 22h}
&\begin{array}{l}
 \mathcal{U}_{4}=(\mathcal{R}_{\mathcal{A}_{44}}*_{N}(\mathcal{E}_{44}-\mathcal{C}_{44}*_{N}\mathcal{T}_{2}*_{M}\mathcal{D}_{44})
*_{M}\mathcal{B}_{44}^{\dagger}+\mathcal{A}_{44}*_{N}\mathcal{A}_{44}^{\dagger}*_{N}\mathcal{H}_{22}\\
\ \ \ \ \ \  \mathcal{H}_{23}*_{M}\mathcal{R}_{\mathcal{B}_{44}})*_{M}\begin{pmatrix}\mathcal{I}\\ 0\end{pmatrix},
 \end{array}\\
 \label{system 2222iiii}
&\begin{array}{l}
 \mathcal{U}_{5}=\mathcal{M}_{1}^{\dagger}*_{N}\mathcal{G}*_{M}\mathcal{D}_{6}^{\dagger}+\mathcal{S}_{1}^{\dagger}*_{N}\mathcal{S}_{1}*_{N}\mathcal{C}_{6}^{\dagger}
*_{N}\mathcal{G}*_{M}\mathcal{N}_{1}^{\dagger}-\mathcal{L}_{\mathcal{M}_{1}}*_{N}\mathcal{L}_{\mathcal{S}_{1}}\\
\ \ \ \ \ \ *_{N}\mathcal{V}_{1}+\mathcal{L}_{\mathcal{M}_{1}}*_{N}\mathcal{V}_{2}*_{M}\mathcal{R}_{\mathcal{N}_{1}}
+\mathcal{V}_{3}*_{M}\mathcal{R}_{\mathcal{D}_{6}},
 \end{array}\\
 \label{system 22j}
&\begin{array}{l}
 \mathcal{V}_{1}=\begin{pmatrix}\mathcal{I}& 0\end{pmatrix}*_{N}(\mathcal{A}_{11}^{\dagger}*_{N}
(\mathcal{E}_{11}-\mathcal{L}_{\mathcal{M}_{1}}*_{N}\mathcal{\nu}_{2}*_{M}\mathcal{R}_{\mathcal{N}_{1}}-
\mathcal{L}_{\mathcal{M}_{2}}*_{N}\mathcal{T}_{2}*_{M}\mathcal{R}_{\mathcal{N}_{2}})\\
\ \ \ \ \ \ +\mathcal{W}_{1}*_{M}\mathcal{B}_{11}
+\mathcal{L}_{\mathcal{A}_{11}}*_{N}\mathcal{W}_{2}),
 \end{array}\\
 \label{system 22k}
 &\begin{array}{l}
\mathcal{V}_{2}=\mathcal{A}^{\dagger}*_{N}\mathcal{E}*_{M}\mathcal{B}^{\dagger}
-\mathcal{A}^{\dagger}*_{N}\mathcal{S}*_{N}\mathcal{C}^{\dagger}*_{N}\mathcal{E}*_{M}\mathcal{N}^{\dagger}*_{M}\mathcal{D}*_{M}\mathcal{B}^{\dagger}
-\mathcal{A}^{\dagger}*_{N}\\
\ \ \ \ \ \ \mathcal{C}*_{N}\mathcal{M}^{\dagger}*_{N}\mathcal{E}*_{M}\mathcal{B}^{\dagger}
 +\mathcal{A}^{\dagger}*_{N}\mathcal{S}*_{N}\mathcal{W}_{4}*_{M}\mathcal{R}_{\mathcal{N}}*_{M}\mathcal{D}*_{M}\mathcal{B}^{\dagger}\\
\ \ \ \ \ \ +\mathcal{L}_{\mathcal{A}}*_{N}\mathcal{W}_{5}+\mathcal{W}_{6}*_{M}\mathcal{R}_{\mathcal{B}},
 \end{array}\\
 \label{system 22l}
&\begin{array}{l}
 \mathcal{V}_{3}=(\mathcal{R}_{\mathcal{A}_{11}}*_{N}(\mathcal{E}_{11}-\mathcal{L}_{\mathcal{M}_{1}}*_{N}\mathcal{\nu}_{2}*_{M}\mathcal{R}_{\mathcal{N}_{1}}
-\mathcal{L}_{\mathcal{M}_{2}}*_{N}\mathcal{T}_{2}*_{M}\mathcal{R}_{\mathcal{N}_{2}})
*_{M}\mathcal{B}_{11}^{\dagger}\\
\ \ \ \ \ \ -\mathcal{A}_{11}*_{N}\mathcal{W}_{1}-\mathcal{W}_{3}*_{M}\mathcal{R}_{\mathcal{B}_{11}})*_{M}\begin{pmatrix}\mathcal{I}\\ 0\end{pmatrix},
 \end{array}\\
 \label{system 22m}
 &\begin{array}{l}
\mathcal{T}_{2}=\mathcal{M}^{\dagger}*_{N}\mathcal{E}*_{M}\mathcal{D}^{\dagger}+\mathcal{S}^{\dagger}*_{N}\mathcal{S}*_{N}\mathcal{C}^{\dagger}
*_{N}\mathcal{E}*_{M}\mathcal{N}^{\dagger}+\mathcal{L}_{\mathcal{M}}*_{N}\mathcal{L}_{\mathcal{S}}*_{N}\mathcal{W}_{7}\\
\ \ \ \ \ \ +\mathcal{L}_{\mathcal{M}}*_{N}\mathcal{W}_{4}*_{M}\mathcal{R}_{\mathcal{N}}
+\mathcal{W}_{8}*_{M}\mathcal{R}_{\mathcal{D}},
 \end{array}\\
 \label{system 22n}
&\mathcal{W}_{4}=\mathcal{A}_{77}^{\dagger}*_{N}\mathcal{E}_{77}*_{M}\mathcal{B}_{77}^{\dagger}
-\mathcal{L}_{\mathcal{A}_{77}}*_{N}\mathcal{Q}_{1}-\mathcal{Q}_{2}*_{M}\mathcal{R}_{\mathcal{B}_{77}},\\
\label{system 22o}
&\begin{array}{l}
\mathcal{W}_{5}=\begin{pmatrix}\mathcal{I}& 0\end{pmatrix}*_{N}(\mathcal{A}_{66}^{\dagger}*_{N}
(\mathcal{E}_{66}-\mathcal{C}_{66}*_{N}\mathcal{W}_{4}*_{M}\mathcal{D}_{66})
-\mathcal{A}_{66}^{\dagger}*_{N}\mathcal{H}_{32}*_{M}\mathcal{B}_{66}\\
\ \ \ \ \ \ +\mathcal{L}_{\mathcal{A}_{66}}*_{N}\mathcal{H}_{31}),
 \end{array}\\
\label{system 22p}
&\begin{array}{l}
\mathcal{W}_{6}=(\mathcal{R}_{\mathcal{A}_{66}}*_{N}(\mathcal{E}_{66}-\mathcal{C}_{66}*_{N}\mathcal{W}_{4}*_{M}\mathcal{D}_{66})
*_{M}\mathcal{B}_{66}^{\dagger}+\mathcal{A}_{66}*_{N}\mathcal{A}_{66}^{\dagger}*_{N}\mathcal{H}_{32}\\
\ \ \ \ \ \ \mathcal{H}_{33}*_{M}\mathcal{R}_{\mathcal{B}_{66}})*_{M}\begin{pmatrix}\mathcal{I}\\ 0\end{pmatrix},
 \end{array}\\
 \label{system 22q}
&\begin{array}{l}
\mathcal{W}_{7}=\begin{pmatrix}\mathcal{I}& 0\end{pmatrix}*_{N}(\mathcal{A}_{88}^{\dagger}*_{N}
(\mathcal{E}_{88}-\mathcal{C}_{88}*_{N}\mathcal{W}_{4}*_{M}\mathcal{D}_{88})
-\mathcal{A}_{88}^{\dagger}*_{N}\mathcal{H}_{42}*_{M}\mathcal{B}_{88}\\
\ \ \ \ \ \ +\mathcal{L}_{\mathcal{A}_{88}}*_{N}\mathcal{H}_{41}),
 \end{array}\\
 \label{system 22r}
&\begin{array}{l}
\mathcal{W}_{8}=(\mathcal{R}_{\mathcal{A}_{88}}*_{N}(\mathcal{E}_{88}-\mathcal{C}_{88}*_{N}\mathcal{W}_{4}*_{M}\mathcal{D}_{88})
*_{M}\mathcal{B}_{88}^{\dagger}+\mathcal{A}_{88}*_{N}\mathcal{A}_{88}^{\dagger}*_{N}\mathcal{H}_{42}\\
\ \ \ \ \ \ \mathcal{H}_{43}*_{M}\mathcal{R}_{\mathcal{B}_{88}})*_{M}\begin{pmatrix}\mathcal{I}\\ 0\end{pmatrix},
 \end{array}
 \end{align}
 \begin{align}
 \label{system 22s}
&\mathcal{Q}_{1}=\begin{pmatrix}\mathcal{I}& 0\end{pmatrix}*_{N}(\mathcal{\widetilde{A}}^{\dagger}*_{N}
\mathcal{\widetilde{E}}-\mathcal{\widetilde{A}}^{\dagger}*_{N}\mathcal{K}_{2}*_{M}\mathcal{\widetilde{B}}+\mathcal{L}_{\mathcal{A}}*_{N}\mathcal{K}_{1}),\\
\label{system 22t}
&\mathcal{Q}_{2}=(\mathcal{R}_{\mathcal{\widetilde{A}}}*_{N}\mathcal{\widetilde{E}}*_{M}\mathcal{B}^{\dagger}
+\mathcal{\widetilde{A}}*_{N}\mathcal{\widetilde{A}}^{\dagger}*_{N}\mathcal{K}_{2}+\mathcal{K}_{3}*_{M}\mathcal{R}_{\mathcal{\widetilde{B}}})
*_{M}\begin{pmatrix}\mathcal{I}\\ 0\end{pmatrix},
\end{align}
\end{subequations}
\end{small}
\end{theorem}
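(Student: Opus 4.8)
The plan is to strip \eqref{1.5} down to the already-solved system \eqref{1.4} by first disposing of the one-sided equations, then substituting their parametrized general solutions into the two coupled Sylvester-type equations and invoking Proposition~\ref{lma 2.3b}, and finally recognizing what remains as an instance of \eqref{1.4} to which Lemma~\ref{lma 2.3a} applies. The equations $\mathcal{A}_{4}*_{N}\mathcal{X}_{1}=\mathcal{E}_{5}$, $\mathcal{X}_{2}*_{M}\mathcal{B}_{4}=\mathcal{E}_{6}$, $\mathcal{A}_{5}*_{N}\mathcal{Y}_{1}=\mathcal{E}_{7}$, $\mathcal{Y}_{2}*_{M}\mathcal{B}_{5}=\mathcal{E}_{8}$ are elementary one-sided tensor equations: each is consistent precisely under the corresponding range condition in \eqref{system 222a}, with general solutions $\mathcal{X}_{1}=\mathcal{A}_{4}^{\dagger}*_{N}\mathcal{E}_{5}+\mathcal{L}_{\mathcal{A}_{4}}*_{N}\mathcal{V}_{11}$, $\mathcal{X}_{2}=\mathcal{E}_{6}*_{M}\mathcal{B}_{4}^{\dagger}+\mathcal{V}_{22}*_{M}\mathcal{R}_{\mathcal{B}_{4}}$, and likewise for $\mathcal{Y}_{1},\mathcal{Y}_{2}$, where $\mathcal{V}_{11},\dots,\mathcal{V}_{44}$ are free. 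The pair $\mathcal{A}_{3}*_{N}\mathcal{W}=\mathcal{E}_{3}$, $\mathcal{W}*_{M}\mathcal{B}_{3}=\mathcal{E}_{4}$ is consistent iff $\mathcal{R}_{\mathcal{A}_{3}}*_{N}\mathcal{E}_{3}=0$, $\mathcal{E}_{4}*_{M}\mathcal{L}_{\mathcal{B}_{3}}=0$, $\mathcal{A}_{3}*_{N}\mathcal{E}_{3}=\mathcal{E}_{4}*_{M}\mathcal{B}_{3}$, with $\mathcal{W}=\mathcal{A}_{3}^{\dagger}*_{N}\mathcal{E}_{3}+\mathcal{L}_{\mathcal{A}_{3}}*_{N}\mathcal{E}_{4}*_{M}\mathcal{B}_{3}^{\dagger}+\mathcal{L}_{\mathcal{A}_{3}}*_{N}\mathcal{U}_{5}*_{M}\mathcal{R}_{\mathcal{B}_{3}}$; these are exactly the constraint equations on the shared unknown $\mathcal{Z}$ in \eqref{1.4}.

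Next I would substitute the parametrized $\mathcal{X}_{1},\mathcal{X}_{2}$ into the ninth equation of \eqref{1.5}. Collecting the terms that do not involve the free blocks into $\widehat{\mathcal{E}}_{9}$ as in \eqref{system 3.33B}, and writing $\widehat{\mathcal{A}}_{6}=\mathcal{A}_{6}*_{N}\mathcal{L}_{\mathcal{A}_{4}}$, $\widehat{\mathcal{B}}_{7}=\mathcal{R}_{\mathcal{B}_{4}}*_{M}\mathcal{B}_{7}$, the ninth equation becomes $\widehat{\mathcal{A}}_{6}*_{N}\mathcal{V}_{11}*_{M}\mathcal{B}_{6}+\mathcal{A}_{7}*_{N}\mathcal{V}_{22}*_{M}\widehat{\mathcal{B}}_{7}+\mathcal{A}_{7}*_{N}(\mathcal{C}_{3}*_{N}\mathcal{X}_{3}*_{M}\mathcal{D}_{3}+\mathcal{C}_{4}*_{N}\mathcal{W}*_{M}\mathcal{D}_{4})*_{M}\mathcal{B}_{6}=\widehat{\mathcal{E}}_{9}$, which is precisely \eqref{3.1}. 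Proposition~\ref{lma 2.3b} then says it is solvable iff the conditions \eqref{system 222b} hold \emph{and} $\mathcal{X}_{3},\mathcal{W}$ satisfy the reduced two-variable equation $\widehat{\mathcal{A}}_{4}*_{N}\mathcal{X}_{3}*_{M}\widehat{\mathcal{B}}_{4}+\widehat{\mathcal{C}}_{4}*_{N}\mathcal{W}*_{M}\widehat{\mathcal{D}}_{4}=\widehat{\mathcal{P}}$ with the notation \eqref{system 3.33C}--\eqref{system 3.33F}; moreover it supplies $\mathcal{V}_{11},\mathcal{V}_{22}$ as in \eqref{system 22a}, \eqref{system 22b} (the free blocks of the proposition being renamed $\mathcal{T}_{i1}$). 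The tenth equation is treated identically, yielding the conditions \eqref{system 222c}, the reduced equation $\widehat{\mathcal{A}}_{5}*_{N}\mathcal{Y}_{3}*_{M}\widehat{\mathcal{B}}_{5}+\widehat{\mathcal{C}}_{5}*_{N}\mathcal{W}*_{M}\widehat{\mathcal{D}}_{5}=\widehat{\mathcal{Q}}$ with notation \eqref{system 3.33G}--\eqref{system 3.33L}, and the formulas \eqref{system 22c}, \eqref{system 22d} for $\mathcal{V}_{33},\mathcal{V}_{44}$.

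At this stage \eqref{1.5} is equivalent to the system in $\mathcal{X}_{3},\mathcal{Y}_{3},\mathcal{W}$ (the other blocks being already determined as above) consisting of $\mathcal{A}_{1}*_{N}\mathcal{X}_{3}*_{M}\mathcal{B}_{1}=\mathcal{E}_{1}$, $\mathcal{A}_{2}*_{N}\mathcal{Y}_{3}*_{M}\mathcal{B}_{2}=\mathcal{E}_{2}$, $\mathcal{A}_{3}*_{N}\mathcal{W}=\mathcal{E}_{3}$, $\mathcal{W}*_{M}\mathcal{B}_{3}=\mathcal{E}_{4}$, together with the two reduced coupled equations above; identifying $(\mathcal{X}_{3},\mathcal{Y}_{3},\mathcal{W})$ with $(\mathcal{X},\mathcal{Y},\mathcal{Z})$ and $(\widehat{\mathcal{A}}_{4},\widehat{\mathcal{B}}_{4},\widehat{\mathcal{C}}_{4},\widehat{\mathcal{D}}_{4},\widehat{\mathcal{P}})$, $(\widehat{\mathcal{A}}_{5},\widehat{\mathcal{B}}_{5},\widehat{\mathcal{C}}_{5},\widehat{\mathcal{D}}_{5},\widehat{\mathcal{Q}})$ with $(\mathcal{A}_{4},\mathcal{B}_{4},\mathcal{C}_{4},\mathcal{D}_{4},\mathcal{P})$, $(\mathcal{A}_{5},\mathcal{B}_{5},\mathcal{C}_{5},\mathcal{D}_{5},\mathcal{Q})$, this is exactly \eqref{1.4}. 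Applying Lemma~\ref{lma 2.3a} with the auxiliary tensors built in \eqref{system 3.33M}--\eqref{system 3.33KK} then produces the remaining solvability conditions \eqref{system 222ddd}--\eqref{system 222eee} verbatim, the general solution \eqref{system 2229}--\eqref{system 2231}, and the auxiliary blocks \eqref{system 2222eeee}--\eqref{system 22t}. Finally, back-substituting these into the formulas for $\mathcal{V}_{11},\dots,\mathcal{V}_{44}$ and thence into \eqref{system 2225}--\eqref{system 2228} gives the stated general solution for $\mathcal{X}_{1},\mathcal{X}_{2},\mathcal{Y}_{1},\mathcal{Y}_{2}$, completing both directions since every step above is an equivalence.

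The main obstacle is organizational rather than conceptual: one must check that this order of elimination introduces no circularity — that $\mathcal{X}_{3}$ and $\mathcal{Y}_{3}$ remain free to satisfy simultaneously their own two-sided equation and their reduced coupled equation, and that the single shared $\mathcal{W}$ can simultaneously meet the two constraint equations and occur consistently in \emph{both} reduced coupled equations, which is exactly the content secured by the structure of \eqref{1.4} and Lemma~\ref{lma 2.3a}. The other delicate point is to confirm, by term-by-term comparison, that the long list \eqref{system 222a}--\eqref{system 222eee} is precisely the union of the one-sided range conditions, the output of Proposition~\ref{lma 2.3b} applied to the ninth and tenth equations, and the output of Lemma~\ref{lma 2.3a}, with the auxiliary tensors \eqref{system 3.33A}--\eqref{system 3.33KK} matching on the nose; this is routine bookkeeping and requires no new idea.
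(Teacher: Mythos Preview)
Your proposal is correct and follows essentially the same route as the paper's proof: first solve the four one-sided equations to obtain \eqref{system 222a} and the parametrized forms \eqref{system 2225}--\eqref{system 2228}, substitute into the ninth and tenth equations to obtain instances of \eqref{3.1}, apply Proposition~\ref{lma 2.3b} to extract the conditions \eqref{system 222b}--\eqref{system 222c} together with the reduced two-variable equations, and then recognize the residual system in $(\mathcal{X}_{3},\mathcal{Y}_{3},\mathcal{W})$ as an instance of \eqref{1.4} to which Lemma~\ref{lma 2.3a} applies verbatim. Your remarks about the shared unknown $\mathcal{W}$ and the need for term-by-term matching of the auxiliary tensors are exactly the bookkeeping the paper carries out, with no additional ideas required.
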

where $\mathcal{W}_{i}$, $\mathcal{K}_{i}$\, $\mathcal{H}_{jk}$ $(i,k=\overline{1,3},\ j=\overline{1,4}),$ $\mathcal{T}_{l1}$ and  $\mathcal{J}_{l1}$  $(l=\overline{1,5})$ are arbitrary tensors over $\mathbb{H}$.
\begin{proof}
We, first, Separate the  system of tensor equations \eqref{1.5} into eight blocks:
\begin{small}
\begin{align}
\label{system 2221}
&\mathcal{A}_{4}*_{N}\mathcal{X}_{1}=\mathcal{E}_{5},\\
\label{system 2222}
&\mathcal{X}_{2}*_{M}\mathcal{B}_{4}=\mathcal{E}_{6},\\
\label{system 2223}
&\mathcal{A}_{5}*_{N}\mathcal{Y}_{1}=\mathcal{E}_{7},\\
\label{system 2224}
& \mathcal{Y}_{2}*_{M}\mathcal{B}_{5}=\mathcal{E}_{8},
\end{align}
\begin{equation}
\label{3.1113}
\mathcal{A}_{1}*_{N}\mathcal{X}_{3}*_{M}\mathcal{B}_{1}=\mathcal{E}_{1},\ \ \
\mathcal{A}_{2}*_{N}\mathcal{Y}_{3}*_{M}\mathcal{B}_{2}=\mathcal{E}_{2},
\end{equation}
\begin{equation}
\label{3.1116}
\mathcal{A}_{3}*_{N}\mathcal{W}=\mathcal{E}_{3},\ \mathcal{W}*_{M}\mathcal{B}_{3}=\mathcal{E}_{4},
\end{equation}
\begin{equation}
\label{3.111}
\begin{gathered}
\mathcal{A}_{6}*_{N}\mathcal{X}_{1}*_{M}\mathcal{B}_{6}+\mathcal{A}_{7}*_{N}\mathcal{X}_{2}*_{M}\mathcal{B}_{7}\\
+\mathcal{A}_{7}*_{N}(\mathcal{C}_{3}*_{N}\mathcal{X}_{3}*_{M}\mathcal{D}_{3}
+\mathcal{C}_{4}*_{N}\mathcal{W}*_{M}\mathcal{D}_{4})*_{M}\mathcal{B}_{6}=\mathcal{E}_{9},
\end{gathered}
\end{equation}
\end{small}
and
\begin{small}
\begin{equation}
\label{3.1112}
\begin{gathered}
\mathcal{A}_{8}*_{N}\mathcal{Y}_{1}*_{M}\mathcal{B}_{8}+\mathcal{A}_{9}*_{N}\mathcal{Y}_{2}*_{M}\mathcal{B}_{9}\\
+\mathcal{A}_{9}*_{N}(\mathcal{H}_{3}*_{N}\mathcal{Y}_{3}*_{M}\mathcal{J}_{3}
+\mathcal{H}_{4}*_{N}\mathcal{W}*_{M}\mathcal{J}_{4})*_{M}\mathcal{B}_{8}=\mathcal{E}_{10},
\end{gathered}
\end{equation}
\end{small}
Our goal is to investigate the solvability conditions that make these eight groups have a solution, and hence we investigate an expression of this solution. Applying $Lemma$ $\ref{lma 2.3}$, we have that the quaternion tensor equations \eqref{system 2221}, \eqref{system 2222}, \eqref{system 2223} and \eqref{system 2224} are solvable respectively if and only if the conditions  \eqref{system 222a} are satisfied, respectively. In that case, the general solution  expresses in the form
\begin{small}
\begin{align}
\label{system 22251}
&\mathcal{X}_{1}=\mathcal{A}_{4}^{\dagger}*_{N}\mathcal{E}_{5}+\mathcal{L}_{\mathcal{A}_{4}}*_{N}\mathcal{V}_{11},\\
\label{system 22261}
&\mathcal{X}_{2}=\mathcal{E}_{6}*_{M}\mathcal{B}_{4}^{\dagger}+\mathcal{V}_{22}*_{M}\mathcal{R}_{\mathcal{B}_{4}},\\
\label{system 2227}
&\mathcal{Y}_{1}=\mathcal{A}_{5}^{\dagger}*_{N}\mathcal{E}_{7}+\mathcal{L}_{\mathcal{A}_{5}}*_{N}\mathcal{V}_{33},\\
\label{system 2228}
& \mathcal{Y}_{2}=\mathcal{E}_{8}*_{M}\mathcal{B}_{5}^{\dagger}+\mathcal{V}_{44}*_{M}\mathcal{R}_{\mathcal{B}_{5}},
\end{align}
\end{small}
where $\mathcal{V}_{ii}$  $(i=\overline{1,4})$ are arbitrary  tensors with qualified orders. Substitute expressions  \eqref{system 22251} and \eqref{system 22261} into  \eqref{3.111} yields:
\begin{small}
\begin{equation}
\label{3.111aa}
\begin{gathered}
\mathcal{\widehat{A}}_{6}*_{N}\mathcal{V}_{11}*_{M}\mathcal{B}_{6}+\mathcal{A}_{7}*_{N}\mathcal{V}_{22}*_{M}\mathcal{\widehat{B}}_{7}\\
+\mathcal{A}_{7}*_{N}(\mathcal{C}_{3}*_{N}\mathcal{X}_{3}*_{M}\mathcal{D}_{3}
+\mathcal{C}_{4}*_{N}\mathcal{W}*_{M}\mathcal{D}_{4})*_{M}\mathcal{B}_{6}=\mathcal{\widehat{E}}_{9},
\end{gathered}
\end{equation}
\end{small}
where $\mathcal{\widehat{A}}_{6}$, $\mathcal{\widehat{B}}_{7}$ and $\mathcal{\widehat{E}}_{9}$ are given by  \eqref{system 3.33A} and \eqref{system 3.33B}.
Utilizing $Proposition$ $\ref{lma 2.3b}$, we have that \eqref{3.111aa} is solvable if and only if the conditions \eqref{system 222b} are satisfied and there exist quaternion tensors $\mathcal{X}_{3}$ and $\mathcal{W}$ satisfy the following equation:
\begin{small}
\begin{equation}
\label{3.111r}
\begin{gathered}
\mathcal{\widehat{A}}_{4}*_{N}\mathcal{X}_{3}*_{M}\mathcal{\widehat{B}}_{4}+\mathcal{\widehat{C}}_{4}*_{N}\mathcal{W}*_{M}\mathcal{\widehat{D}}_{4}
=\mathcal{\widehat{P}},
\end{gathered}
\end{equation}
\end{small}
where $\mathcal{\widehat{A}}_{4},$ $\mathcal{\widehat{B}}_{4},$ $\mathcal{\widehat{C}}_{4}$, $\mathcal{\widehat{D}}_{4}$, and $\mathcal{\widehat{P}}$ are defined in \eqref{system 3.33EE}-\eqref{system 3.33F}. In that case,  the quaternion tensors $\mathcal{V}_{11}$ and $\mathcal{V}_{22}$ can be expressed as
\begin{small}
\begin{subequations}
\begin{align}
&\begin{array}{l}
 \mathcal{V}_{11}=\mathcal{\widehat{A}}_{6}^{\dagger}*_{N}\mathcal{\grave{E}}_{1}*_{M}\mathcal{B}_{6}^{\dagger}
-\mathcal{\widehat{A}}_{6}^{\dagger}*_{N}\mathcal{A}_{7}*_{N}\mathcal{M}_{11}^{\dagger}*_{N}\mathcal{\grave{E}}_{1}*_{M}\mathcal{B}_{6}^{\dagger}
-\mathcal{\widehat{A}}_{6}^{\dagger}*_{N}\mathcal{S}_{11}*_{N}\\
\ \ \ \ \ \mathcal{A}_{7}^{\dagger}*_{N}\mathcal{\grave{E}}_{1}*_{M}\mathcal{N}_{11}^{\dagger}*_{M}\mathcal{\widehat{B}}_{7}*_{M}\mathcal{B}_{6}^{\dagger}-\mathcal{\widehat{A}}_{6}^{\dagger}*_{N}\mathcal{S}_{11}
*_{N}\mathcal{T}_{21}*_{M}\mathcal{R}_{\mathcal{N}_{11}}*_{M}\mathcal{\widehat{B}}_{7}\\
\ \ \ \ \ *_{M}\mathcal{B}_{6}^{\dagger}
+\mathcal{L}_{\mathcal{\widehat{A}}_{6}}*_{N}\mathcal{T}_{41}+\mathcal{T}_{51}*_{M}\mathcal{R}_{\mathcal{B}_{6}},
 \end{array}\\
&\begin{array}{l}
 \mathcal{V}_{22}=\mathcal{M}_{11}^{\dagger}*_{N}\mathcal{\grave{E}}_{1}*_{M}\mathcal{\widehat{B}}_{7}^{\dagger}
 +\mathcal{S}_{11}^{\dagger}*_{N}\mathcal{S}_{11}*_{N}\mathcal{A}_{7}^{\dagger}
*_{N}\mathcal{\grave{E}}_{1}*_{M}\mathcal{N}_{11}^{\dagger}+\mathcal{L}_{\mathcal{M}_{11}}*_{N}\\
\ \ \ \ \ \ \mathcal{L}_{\mathcal{S}_{11}}*_{N}\mathcal{T}_{11}+\mathcal{L}_{\mathcal{M}_{11}}*_{N}\mathcal{T}_{21}*_{M}\mathcal{R}_{\mathcal{N}_{11}}+\mathcal{T}_{31}*_{M}\mathcal{R}_{\mathcal{\widehat{B}}_{7}},
 \end{array}
\end{align}
\end{subequations}
\end{small}
with $\mathcal{T}_{l1}$ $(l=\overline{1,5})$ are arbitrary with appropriate orders.
Similarly, we substitute \eqref{system 2227} and \eqref{system 2228} into \eqref{3.1112} yields:
\begin{small}
\begin{equation}
\label{3.111a}
\begin{gathered}
\mathcal{\widehat{A}}_{8}*_{N}\mathcal{V}_{33}*_{M}\mathcal{B}_{8}+\mathcal{A}_{9}*_{N}\mathcal{V}_{44}*_{M}\mathcal{\widehat{B}}_{9}\\
+\mathcal{A}_{9}*_{N}(\mathcal{H}_{3}*_{N}\mathcal{Y}_{3}*_{M}\mathcal{J}_{3}
+\mathcal{H}_{4}*_{N}\mathcal{W}*_{M}\mathcal{J}_{4})*_{M}\mathcal{B}_{8}=\mathcal{\widehat{E}}_{10},
\end{gathered}
\end{equation}
\end{small}
where  $\mathcal{\widehat{A}}_{8}$, $\mathcal{\widehat{B}}_{9},$ and $\mathcal{\widehat{E}}_{10}$ are defined in \eqref{system 3.33G} and \eqref{system 3.33H}.
Utilizing $Proposition$ $\ref{lma 2.3b}$, we have that \eqref{3.111a} is solvable if and only if the conditions \eqref{system 222c} are satisfied and there exist quaternion tensors $\mathcal{X}_{3}$ and $\mathcal{W}$ satisfy the tensor equation:
\begin{small}
\begin{equation}
\label{3.111r}
\begin{gathered}
\mathcal{\widehat{A}}_{5}*_{N}\mathcal{Y}_{3}*_{M}\mathcal{\widehat{B}}_{5}+\mathcal{\widehat{C}}_{5}*_{N}\mathcal{W}*_{M}\mathcal{\widehat{D}}_{5}
=\mathcal{\widehat{Q}},
\end{gathered}
\end{equation}
\end{small}
where the quaternion tensors $\mathcal{\widehat{A}}_{5},$ $\mathcal{\widehat{B}}_{5},$ $\mathcal{\widehat{C}}_{5}$, $\mathcal{\widehat{D}}_{5}$ and $\mathcal{\widehat{Q}}$ are defined in \eqref{system 3.33KKK}-\eqref{system 3.33L}. In that case,  $\mathcal{V}_{33}$ and $\mathcal{V}_{44}$ can be expressed as
\begin{small}
\begin{subequations}
\begin{align}
&\begin{array}{l}
 \mathcal{V}_{33}=\mathcal{\widehat{A}}_{8}^{\dagger}*_{N}\mathcal{\grave{E}}_{2}*_{M}\mathcal{B}_{8}^{\dagger}
-\mathcal{\widehat{A}}_{8}^{\dagger}*_{N}\mathcal{A}_{9}*_{N}\mathcal{M}_{22}^{\dagger}*_{N}\mathcal{\grave{E}}_{2}*_{M}\mathcal{B}_{8}^{\dagger}
-\mathcal{\widehat{A}}_{8}^{\dagger}*_{N}\mathcal{S}_{22}*_{N}\\
\ \ \ \ \ \mathcal{A}_{9}^{\dagger}*_{N}\mathcal{\grave{E}}_{2}*_{M}\mathcal{N}_{22}^{\dagger}*_{M}\mathcal{\widehat{B}}_{9}*_{M}\mathcal{B}_{8}^{\dagger}-\mathcal{\widehat{A}}_{8}^{\dagger}*_{N}\mathcal{S}_{22}
*_{N}\mathcal{J}_{21}*_{M}\mathcal{R}_{\mathcal{N}_{22}}*_{M}\mathcal{\widehat{B}}_{9}
\\
\ \ \ \ \ *_{M}\mathcal{B}_{8}^{\dagger}+\mathcal{L}_{\mathcal{\widehat{A}}_{8}}*_{N}\mathcal{J}_{41}+\mathcal{J}_{51}*_{M}\mathcal{R}_{\mathcal{B}_{8}},
 \end{array}\\
&\begin{array}{l}
 \mathcal{V}_{44}=\mathcal{M}_{22}^{\dagger}*_{N}\mathcal{\grave{E}}_{2}*_{M}\mathcal{\widehat{B}}_{9}^{\dagger}
 +\mathcal{S}_{22}^{\dagger}*_{N}\mathcal{S}_{22}*_{N}\mathcal{A}_{9}^{\dagger}
*_{N}\mathcal{\grave{E}}_{2}*_{M}\mathcal{N}_{22}^{\dagger}+\mathcal{L}_{\mathcal{M}_{22}}*_{N}\\
\ \ \ \ \ \ \mathcal{L}_{\mathcal{S}_{22}}*_{N}\mathcal{J}_{11}+\mathcal{L}_{\mathcal{M}_{22}}*_{N}\mathcal{J}_{21}*_{M}\mathcal{R}_{\mathcal{N}_{22}}+\mathcal{J}_{31}*_{M}\mathcal{R}_{\mathcal{\widehat{B}}_{9}},
 \end{array}
\end{align}
\end{subequations}
\end{small}
where $\mathcal{J}_{l1}$ $(l=\overline{1,5})$ are arbitrary tensors with appropriate sizes.
Now, we summarize all previous processes in our proof. The system of Sylvester-type quaternion  tensor equations \eqref{1.5} is solvable if and only if the conditions \eqref{system 222a},  \eqref{system 222b} and \eqref{system 222c} are satisfying and there exist quaternion tensors  $\mathcal{X}_{3}$, $\mathcal{Y}_{3}$ and $\mathcal{W}$ verify the following system:
\begin{small}
\begin{align}
\label{1.4cc}
\left\{
\begin{array}{rll}
\begin{gathered}
\mathcal{A}_{1}*_{N}\mathcal{X}_{3}*_{M}\mathcal{B}_{1}=\mathcal{E}_{1},\\
\mathcal{A}_{2}*_{N}\mathcal{Y}_{3}*_{M}\mathcal{B}_{2}=\mathcal{E}_{2},\\
\mathcal{A}_{3}*_{N}\mathcal{W}=\mathcal{E}_{3},\ \mathcal{W}*_{M}\mathcal{B}_{3}=\mathcal{E}_{4}\\
\mathcal{\widehat{A}}_{4}*_{N}\mathcal{X}_{3}*_{M}\mathcal{\widehat{B}}_{4}+\mathcal{\widehat{C}}_{4}*_{N}\mathcal{W}*_{M}\mathcal{\widehat{D}}_{4}=\mathcal{\widehat{P}},\\
\mathcal{\widehat{A}}_{5}*_{N}\mathcal{Y}_{3}*_{M}\mathcal{\widehat{B}}_{5}+\mathcal{\widehat{C}}_{5}*_{N}\mathcal{W}*_{M}\mathcal{\widehat{D}}_{5}=\mathcal{\widehat{Q}}.
\end{gathered}
\end{array}
  \right.
\end{align}
\end{small}
Finally, utilizing $Lemma$ $\ref{lma 2.3a},$ we have that  \eqref{1.4cc} is solvable  if and only if the conditions defined by \eqref{system 222ddd}-\eqref{system 222eee} are satisfying. In that case, $\mathcal{X}_{3}$, $\mathcal{Y}_{3}$ and $\mathcal{W}$  can expressed as
\begin{small}
\begin{align}
\label{system 2229}
&\mathcal{X}_{3}=\mathcal{A}_{1}^{\dagger}*_{N}\mathcal{E}_{1}*_{M}\mathcal{B}_{1}^{\dagger}
+\mathcal{L}_{\mathcal{A}_{1}}*_{N}\mathcal{U}_{1}+\mathcal{U}_{2}*_{M}\mathcal{R}_{\mathcal{B}_{1}},\\
\label{system 2230}
&\mathcal{Y}_{3}=\mathcal{A}_{2}^{\dagger}*_{N}\mathcal{E}_{2}*_{M}\mathcal{B}_{2}^{\dagger}
+\mathcal{L}_{\mathcal{A}_{2}}*_{N}\mathcal{U}_{3}+\mathcal{U}_{4}*_{M}\mathcal{R}_{\mathcal{B}_{2}},\\
\label{system 2231}
&\mathcal{W}=\mathcal{A}_{3}^{\dagger}*_{N}\mathcal{E}_{3}+\mathcal{L}_{\mathcal{A}_{3}}*_{N}\mathcal{E}_{4}*_{M}\mathcal{B}_{3}^{\dagger}
+\mathcal{L}_{\mathcal{A}_{3}}*_{N}\mathcal{U}_{5}*_{M}\mathcal{R}_{\mathcal{B}_{3}},
\end{align}
\end{small}
where $\mathcal{U}_{i}$   $(i=\overline{1,5})$  are arbitrary  tensors defined by \eqref{system 2222eeee}-\eqref{system 2222iiii}.
\end{proof}

Here, we give an  algorithm with a numerical example to illustrate $Theorem$ $\ref{system 3.33AAt}$.  All computations can be run on MATLAB 2020b.
\begin{algorithm}Calculate  the general solution to \eqref{1.5}
\begin{enumerate}
\item \textbf{Input} the system of two-sided four coupled Sylvester-like quaternion tensor equations \eqref{1.5} with viable orders over $\mathbb{H}$.
\item Compute all quaternion tensors, which appeared in \eqref{system 3.33A}-\eqref{system 3.33KK}.
 \item Check whether the Moore-Penrose inverses conditions in $Theorem$ $\ref{system 3.33AAt}$ are satisfiing or not. If not, return``The system \eqref{1.5} is inconsistent''.
 \item Else compute the quaternion unknowns $\mathcal{X}_{i}, \mathcal{Y}_{i}, \mathcal{W}$, where $(i=\overline{1,3})$  by \eqref{system 2225}-\eqref{system 22t}.
 \item \textbf{Output} the general solution of the system \eqref{1.5} is $\mathcal{X}_{i}, \mathcal{Y}_{i}, \mathcal{W}$.
\end{enumerate}
\end{algorithm}
\begin{example}Assume that the fourth ordered tensors in  $\eqref{1.5}$ are given as
\begin{small}
\begin{align*}
\begin{gathered}
\mathcal{A}_{1}(:,:,1,1)=\left(\begin{smallmatrix}
             \mathbf{k} & \mathbf{i+k} \\
            \mathbf{0} & \mathbf{0}
            \end{smallmatrix}\right),
\mathcal{A}_{1}(:,:,2,1)=\left(\begin{smallmatrix}
             \mathbf{5i} & \mathbf{1} \\
            \mathbf{0} & \mathbf{0}
            \end{smallmatrix}\right),\
\mathcal{A}_{1}(:,:,1,2)=\left(\begin{smallmatrix}
             \mathbf{2-i} & \mathbf{0} \\
            \mathbf{0} & \mathbf{2k}
            \end{smallmatrix}\right),\\
\mathcal{A}_{1}(:,:,2,2)=\left(\begin{smallmatrix}
             \mathbf{0} & \mathbf{i} \\
            \mathbf{k} & \mathbf{0}
            \end{smallmatrix}\right),\
\mathcal{B}_{1}(:,:,1,1)=\left(\begin{smallmatrix}
             \mathbf{k} & \mathbf{2-k} \\
            \mathbf{j} & \mathbf{0}
            \end{smallmatrix}\right),
\mathcal{B}_{1}(:,:,1,2)=\left(\begin{smallmatrix}
             \mathbf{0} & \mathbf{0} \\
            \mathbf{0} & \mathbf{3i}
            \end{smallmatrix}\right),\\
\mathcal{B}_{1}(:,:,2,1)=\left(\begin{smallmatrix}
             \mathbf{i} & \mathbf{j} \\
            \mathbf{k} & \mathbf{2}
            \end{smallmatrix}\right),\
\mathcal{B}_{1}(:,:,2,2)=\left(\begin{smallmatrix}
             \mathbf{2} & \mathbf{i} \\
            \mathbf{0} & \mathbf{j}
            \end{smallmatrix}\right),\
\mathcal{A}_{2}(:,:,1,1)=\left(\begin{smallmatrix}
             \mathbf{6} & \mathbf{k} \\
            \mathbf{0} & \mathbf{-k}
            \end{smallmatrix}\right),\\
\mathcal{A}_{2}(:,:,1,2)=\left(\begin{smallmatrix}
             \mathbf{k} & \mathbf{0} \\
            \mathbf{9} & \mathbf{0}
            \end{smallmatrix}\right),\
\mathcal{A}_{2}(:,:,2,1)=\left(\begin{smallmatrix}
             \mathbf{7} & \mathbf{0} \\
            \mathbf{0} & \mathbf{-3j}
            \end{smallmatrix}\right),\
\mathcal{A}_{2}(:,:,2,2)=\left(\begin{smallmatrix}
             \mathbf{2k} & \mathbf{0} \\
            \mathbf{8} & \mathbf{0}
            \end{smallmatrix}\right),\\
\mathcal{B}_{2}(:,:,1,1)=\left(\begin{smallmatrix}
             \mathbf{8} & \mathbf{0} \\
            \mathbf{0} & \mathbf{3j}
            \end{smallmatrix}\right),\
 \mathcal{B}_{2}(:,:,1,2)=\left(\begin{smallmatrix}
             \mathbf{0} & \mathbf{0} \\
            \mathbf{7} & \mathbf{0}
            \end{smallmatrix}\right),\
\mathcal{B}_{2}(:,:,2,1)=\left(\begin{smallmatrix}
             \mathbf{9} & \mathbf{i-k} \\
            \mathbf{0} & \mathbf{0}
            \end{smallmatrix}\right),\\
\mathcal{B}_{2}(:,:,2,2)=\left(\begin{smallmatrix}
             \mathbf{i} & \mathbf{0} \\
            \mathbf{6} & \mathbf{-k}
            \end{smallmatrix}\right),\
\mathcal{A}_{4}(:,:,1,1)=\left(\begin{smallmatrix}
             \mathbf{0} & \mathbf{j} \\
            \mathbf{j} & \mathbf{0}
            \end{smallmatrix}\right),\
 \mathcal{A}_{4}(:,:,1,2)=\left(\begin{smallmatrix}
             \mathbf{0} & \mathbf{k} \\
            \mathbf{0} & \mathbf{k}
            \end{smallmatrix}\right),\\
\mathcal{A}_{4}(:,:,2,1)=\left(\begin{smallmatrix}
             \mathbf{i+j} & \mathbf{0} \
            \mathbf{0} & \mathbf{k}
            \end{smallmatrix}\right),\
\mathcal{A}_{4}(:,:,2,2)=\left(\begin{smallmatrix}
             \mathbf{0} & \mathbf{0} \\
            \mathbf{j+k} & \mathbf{i}
            \end{smallmatrix}\right),\
\mathcal{B}_{4}(:,:,1,1)=\left(\begin{smallmatrix}
             \mathbf{0} & \mathbf{i+k} \\
            \mathbf{0} & \mathbf{j}
            \end{smallmatrix}\right),\\
\mathcal{B}_{4}(:,:,1,2)=\left(\begin{smallmatrix}
             \mathbf{i-j} & \mathbf{0} \\
            \mathbf{k} & \mathbf{0}
            \end{smallmatrix}\right),\
\mathcal{B}_{4}(:,:,2,1)=\left(\begin{smallmatrix}
             \mathbf{0} & \mathbf{0} \\
            \mathbf{j-k} & \mathbf{i}
            \end{smallmatrix}\right),\
\mathcal{B}_{4}(:,:,2,2)=\left(\begin{smallmatrix}
             \mathbf{k-i} & \mathbf{j} \\
            \mathbf{0} & \mathbf{0}
            \end{smallmatrix}\right),\\
\mathcal{A}_{5}(:,:,1,1)=\left(\begin{smallmatrix}
             \mathbf{0} & \mathbf{i} \\
            \mathbf{0} & \mathbf{0}
            \end{smallmatrix}\right),\
\mathcal{A}_{5}(:,:,1,2)=\left(\begin{smallmatrix}
             \mathbf{1} & \mathbf{0} \\
            \mathbf{0} & \mathbf{j}
            \end{smallmatrix}\right),\
\mathcal{A}_{5}(:,:,2,1)=\left(\begin{smallmatrix}
             \mathbf{0} & \mathbf{0} \\
            \mathbf{0} & \mathbf{-k}
            \end{smallmatrix}\right),\\
\mathcal{A}_{5}(:,:,2,2)=\left(\begin{smallmatrix}
             \mathbf{1} & \mathbf{i} \\
            \mathbf{0} & \mathbf{0}
            \end{smallmatrix}\right),\
\mathcal{B}_{5}(:,:,1,1)=\left(\begin{smallmatrix}
             \mathbf{2i} & \mathbf{0} \\
            \mathbf{2j} & \mathbf{0}
            \end{smallmatrix}\right),\
 \mathcal{B}_{5}(:,:,1,2)=\left(\begin{smallmatrix}
             \mathbf{0} & \mathbf{i-k} \\
            \mathbf{0} & \mathbf{i+k}
            \end{smallmatrix}\right),\\
\mathcal{B}_{5}(:,:,2,1)=\left(\begin{smallmatrix}
             \mathbf{-i+j} & \mathbf{0} \\
            \mathbf{0} & \mathbf{i+k}
            \end{smallmatrix}\right),\
 \mathcal{B}_{5}(:,:,2,2)=\left(\begin{smallmatrix}
             \mathbf{0} & \mathbf{3} \\
            \mathbf{3-i} & \mathbf{j}
            \end{smallmatrix}\right),\
\mathcal{A}_{3}(:,:,1,1)=\left(\begin{smallmatrix}
             \mathbf{0} & \mathbf{0} \\
            \mathbf{2} & \mathbf{i}
            \end{smallmatrix}\right),\\
\mathcal{A}_{3}(:,:,1,2)=\left(\begin{smallmatrix}
             \mathbf{2} & \mathbf{0} \\
            \mathbf{0} & \mathbf{2i}
            \end{smallmatrix}\right),\
\mathcal{A}_{3}(:,:,2,1)=\left(\begin{smallmatrix}
             \mathbf{0} & \mathbf{0} \\
            \mathbf{0} & \mathbf{k}
            \end{smallmatrix}\right),\
 \mathcal{A}_{3}(:,:,2,2)=\left(\begin{smallmatrix}
             \mathbf{0} & \mathbf{3-j} \\
            \mathbf{0} & \mathbf{0}
            \end{smallmatrix}\right),\\
\mathcal{B}_{3}(:,:,1,1)=\left(\begin{smallmatrix}
             \mathbf{i} & \mathbf{0} \\
            \mathbf{0} & \mathbf{2j}
            \end{smallmatrix}\right),\
 \mathcal{B}_{3}(:,:,1,2)=\left(\begin{smallmatrix}
             \mathbf{k} & \mathbf{-2k} \\
            \mathbf{0} & \mathbf{0}
            \end{smallmatrix}\right),\
\mathcal{B}_{3}(:,:,2,1)=\left(\begin{smallmatrix}
             \mathbf{0} & \mathbf{0} \\
            \mathbf{0} & \mathbf{2i}
            \end{smallmatrix}\right),\\
\mathcal{B}_{3}(:,:,2,2)=\left(\begin{smallmatrix}
             \mathbf{3i} & \mathbf{0} \\
            \mathbf{-j} & \mathbf{0}
            \end{smallmatrix}\right),\
\mathcal{E}_{1}(:,:,1,1)=\left(\begin{smallmatrix}
             \mathbf{-7+i-6j-5k} & \mathbf{2-j+2k} \\
            \mathbf{-k} & \mathbf{2-6i-4j}
            \end{smallmatrix}\right),\\
\mathcal{E}_{1}(:,:,1,2)=\left(\begin{smallmatrix}
             \mathbf{3+3i} & \mathbf{3+3i-3k} \\
            \mathbf{-3k} & \mathbf{0}
            \end{smallmatrix}\right),\
\mathcal{E}_{1}(:,:,2,1)=\left(\begin{smallmatrix}
             \mathbf{1+i+7j-2k} & \mathbf{-1-4i+2j} \\
            \mathbf{3j} & \mathbf{-2i-6k}
            \end{smallmatrix}\right),\\
\mathcal{E}_{1}(:,:,2,2)=\left(\begin{smallmatrix}
             \mathbf{2+4i-j+10k} & \mathbf{-1-i+3j-k} \\
            \mathbf{-1} & \mathbf{4+4j}
            \end{smallmatrix}\right),\
\mathcal{E}_{2}(:,:,1,1)=\left(\begin{smallmatrix}
             \mathbf{-56+8i+90j+18k} & \mathbf{-3-8k} \\
            \mathbf{24i-72j+72k} & \mathbf{39+8k}
            \end{smallmatrix}\right),\\
\mathcal{E}_{2}(:,:,1,2)=\left(\begin{smallmatrix}
             \mathbf{84i-14k} & \mathbf{14j} \\
            \mathbf{-126} & \mathbf{-14j}
            \end{smallmatrix}\right),\
\mathcal{E}_{2}(:,:,2,1)=\left(\begin{smallmatrix}
             \mathbf{-45+42i-18j-33k} & \mathbf{3i-6k} \\
            \mathbf{8-73j+81k} & \mathbf{12i+21k}
            \end{smallmatrix}\right),\\
\mathcal{E}_{2}(:,:,2,2)=\left(\begin{smallmatrix}
             \mathbf{-1+65i+6j-42k} & \mathbf{-i+11j} \\
            \mathbf{-116+9j+9k} & \mathbf{13i-11j}
            \end{smallmatrix}\right),\
\mathcal{E}_{3}(:,:,1,1)=\left(\begin{smallmatrix}
             \mathbf{0} & \mathbf{0} \\
            \mathbf{2i-2k} & \mathbf{-1+j}
            \end{smallmatrix}\right),\\
\mathcal{E}_{3}(:,:,1,2)=\left(\begin{smallmatrix}
             \mathbf{0} & \mathbf{0} \\
            \mathbf{2j-2k} & \mathbf{j+k}
            \end{smallmatrix}\right),\
\mathcal{E}_{3}(:,:,2,1)=\left(\begin{smallmatrix}
             \mathbf{0} & \mathbf{3-3i-j-k} \\
            \mathbf{2+2i} & \mathbf{-1+i}
            \end{smallmatrix}\right) ,\\
\mathcal{E}_{3}(:,:,2,2)=\left(\begin{smallmatrix}
             \mathbf{0} & \mathbf{6i+2k} \\
            \mathbf{0} & \mathbf{0}
            \end{smallmatrix}\right),\
\mathcal{E}_{4}(:,:,1,1)=\left(\begin{smallmatrix}
             \mathbf{-1-j} & \mathbf{0} \\
            \mathbf{0} & \mathbf{4k}
            \end{smallmatrix}\right),\
\mathcal{E}_{4}(:,:,1,2)=\left(\begin{smallmatrix}
             \mathbf{-1-2i-j} & \mathbf{0} \\
            \mathbf{0} & \mathbf{0}
            \end{smallmatrix}\right),\\
\mathcal{E}_{4}(:,:,2,1)=\left(\begin{smallmatrix}
             \mathbf{0} & \mathbf{0} \\
            \mathbf{0} & \mathbf{-4}
            \end{smallmatrix}\right),\
\mathcal{E}_{4}(:,:,2,2)=\left(\begin{smallmatrix}
             \mathbf{-3-4j-k} & \mathbf{0} \\
            \mathbf{0} & \mathbf{-j+k}
            \end{smallmatrix}\right),\\
\mathcal{E}_{5}(:,:,1,1)=\left(\begin{smallmatrix}
             \mathbf{-1-k} & \mathbf{-2j} \\
            \mathbf{-2j} & \mathbf{j}
            \end{smallmatrix}\right),\
 \mathcal{E}_{5}(:,:,1,2)=\left(\begin{smallmatrix}
             \mathbf{0} & \mathbf{1+2k} \\
            \mathbf{-1+i} & \mathbf{1-j+2k}
            \end{smallmatrix}\right),\\
\mathcal{E}_{5}(:,:,2,1)=\left(\begin{smallmatrix}
             \mathbf{0} & \mathbf{-1+j} \\
            \mathbf{i+j} & \mathbf{-k}
            \end{smallmatrix}\right),\
\mathcal{E}_{5}(:,:,2,2)=\left(\begin{smallmatrix}
             \mathbf{3i+j} & \mathbf{0} \\
            \mathbf{0} & \mathbf{-1+2k}
            \end{smallmatrix}\right),            
\end{gathered}            
 \end{align*}                  
\begin{align*}
\begin{gathered} 
\mathcal{E}_{6}(:,:,1,1)=\left(\begin{smallmatrix}
             \mathbf{j-2k} & \mathbf{i+2j-k} \\
            \mathbf{0} & \mathbf{i+2k}
            \end{smallmatrix}\right),\
\mathcal{E}_{6}(:,:,2,1)=\left(\begin{smallmatrix}
             \mathbf{-1-j-k} & \mathbf{2i} \\
            \mathbf{0} & \mathbf{i}
            \end{smallmatrix}\right),\\
\mathcal{E}_{6}(:,:,1,2)=\left(\begin{smallmatrix}
             \mathbf{2i+j+k} & \mathbf{0} \\
            \mathbf{0} & \mathbf{-1-i-k}
            \end{smallmatrix}\right),\
\mathcal{E}_{6}(:,:,2,2)=\left(\begin{smallmatrix}
             \mathbf{-2-2j} & \mathbf{-1} \\
            \mathbf{0} & \mathbf{1}
            \end{smallmatrix}\right),\\
\mathcal{E}_{7}(:,:,1,1)=\left(\begin{smallmatrix}
             \mathbf{0} & \mathbf{0} \\
            \mathbf{0} & \mathbf{-2j}
            \end{smallmatrix}\right),\
\mathcal{E}_{7}(:,:,1,2)=\left(\begin{smallmatrix}
             \mathbf{2i} & \mathbf{-3+j} \\
            \mathbf{0} & \mathbf{0}
            \end{smallmatrix}\right),\
\mathcal{E}_{7}(:,:,2,1)=\left(\begin{smallmatrix}
             \mathbf{3i+3j} & \mathbf{3k} \\
            \mathbf{0} & \mathbf{-3k},
             \end{smallmatrix}\right),\\
\mathcal{E}_{7}(:,:,2,2)=\left(\begin{smallmatrix}
             \mathbf{4j} & \mathbf{-5} \\
            \mathbf{0} & \mathbf{-4},
             \end{smallmatrix}\right),\
\mathcal{E}_{8}(:,:,1,1)=\left(\begin{smallmatrix}
             \mathbf{4i} & \mathbf{-2} \\
            \mathbf{2k} & \mathbf{-2},
             \end{smallmatrix}\right),\\
\mathcal{E}_{8}(:,:,1,2)=\left(\begin{smallmatrix}
             \mathbf{-1+3i-j-3k} & \mathbf{0} \\
            \mathbf{i-k} & \mathbf{-2+2j},
             \end{smallmatrix}\right),\
\mathcal{E}_{8}(:,:,2,1)=\left(\begin{smallmatrix}
             \mathbf{-2i+2j} & \mathbf{1+k} \\
            \mathbf{i-k} & \mathbf{-1+j},
             \end{smallmatrix}\right),\\
\mathcal{E}_{8}(:,:,2,2)=\left(\begin{smallmatrix}
             \mathbf{9-3k} & \mathbf{0} \\
            \mathbf{3i} & \mathbf{2i+3j+k},
             \end{smallmatrix}\right),\
\mathcal{A}_{6}(:,:,1,1)=\left(\begin{smallmatrix}
             \mathbf{0} & \mathbf{k} \\
            \mathbf{0} & \mathbf{2i}
            \end{smallmatrix}\right),
\mathcal{A}_{6}(:,:,1,2)=\left(\begin{smallmatrix}
             \mathbf{-1} & \mathbf{-1+i} \\
            \mathbf{0} & \mathbf{0}
            \end{smallmatrix}\right),\\
\mathcal{A}_{6}(:,:,2,1)=\left(\begin{smallmatrix}
             \mathbf{0} & \mathbf{1} \\
            \mathbf{0} & \mathbf{1-i}
            \end{smallmatrix}\right),\
 \mathcal{A}_{6}(:,:,2,2)=\left(\begin{smallmatrix}
             \mathbf{0} & \mathbf{0} \\
            \mathbf{2} & \mathbf{2-i}
            \end{smallmatrix}\right),\
\mathcal{B}_{6}(:,:,1,2)=\left(\begin{smallmatrix}
             \mathbf{i} & \mathbf{2-i} \\
            \mathbf{j} & \mathbf{0}
            \end{smallmatrix}\right),\\
\mathcal{B}_{6}(:,:,1,1)=\left(\begin{smallmatrix}
             \mathbf{0} & \mathbf{0} \\
            \mathbf{0} & \mathbf{i+j+k}
            \end{smallmatrix}\right),\
\mathcal{B}_{6}(:,:,2,1)=\left(\begin{smallmatrix}
             \mathbf{j} & \mathbf{2-j} \\
            \mathbf{k} & \mathbf{0}
            \end{smallmatrix}\right),\
\mathcal{B}_{6}(:,:,2,2)=\left(\begin{smallmatrix}
             \mathbf{0} & \mathbf{0} \\
            \mathbf{0} & \mathbf{i+k}
            \end{smallmatrix}\right),\\           
\mathcal{A}_{7}(:,:,1,1)=\left(\begin{smallmatrix}
             \mathbf{i-k} & \mathbf{0} \\
            \mathbf{0} & \mathbf{-2k}
            \end{smallmatrix}\right),\
\mathcal{A}_{7}(:,:,1,2)=\left(\begin{smallmatrix}
             \mathbf{0} & \mathbf{i} \\
            \mathbf{j} & \mathbf{0}
            \end{smallmatrix}\right),\
\mathcal{A}_{7}(:,:,2,1)=\left(\begin{smallmatrix}
             \mathbf{i} & \mathbf{j-k} \\
            \mathbf{0} & \mathbf{0}
            \end{smallmatrix}\right),\\
\mathcal{A}_{7}(:,:,2,2)=\left(\begin{smallmatrix}
             \mathbf{0} & \mathbf{k} \\
            \mathbf{i} & \mathbf{0}
            \end{smallmatrix}\right),\
\mathcal{B}_{7}(:,:,1,1)=\left(\begin{smallmatrix}
             \mathbf{0} & \mathbf{j} \\
            \mathbf{k} & \mathbf{0}
            \end{smallmatrix}\right),\
\mathcal{B}_{7}(:,:,1,2)=\left(\begin{smallmatrix}
             \mathbf{j} & \mathbf{0} \\
            \mathbf{0} & \mathbf{k}
            \end{smallmatrix}\right),\\
\mathcal{B}_{7}(:,:,2,1)=\left(\begin{smallmatrix}
             \mathbf{i} & \mathbf{0} \\
            \mathbf{0} & \mathbf{j}
            \end{smallmatrix}\right),\
\mathcal{B}_{7}(:,:,2,2)=\left(\begin{smallmatrix}
             \mathbf{1-i} & \mathbf{0} \\
            \mathbf{0} & \mathbf{j}
            \end{smallmatrix}\right),\
\mathcal{C}_{3}(:,:,1,1)=\left(\begin{smallmatrix}
             \mathbf{0} & \mathbf{i-k} \\
            \mathbf{0} & \mathbf{j}
            \end{smallmatrix}\right),\\
\mathcal{C}_{3}(:,:,1,2)=\left(\begin{smallmatrix}
             \mathbf{i+j} & \mathbf{0} \\
            \mathbf{i-2j} & \mathbf{0}
            \end{smallmatrix}\right),\
\mathcal{C}_{3}(:,:,2,1)=\left(\begin{smallmatrix}
             \mathbf{0} & \mathbf{0} \\
            \mathbf{j+k} & \mathbf{i}
            \end{smallmatrix}\right),\
\mathcal{C}_{3}(:,:,2,2)=\left(\begin{smallmatrix}
             \mathbf{j} & \mathbf{-k} \\
            \mathbf{0} & \mathbf{0}
            \end{smallmatrix}\right),\\
\mathcal{D}_{3}(:,:,1,1)=\left(\begin{smallmatrix}
             \mathbf{i} & \mathbf{0} \\
            \mathbf{j} & \mathbf{0}
            \end{smallmatrix}\right),\
 \mathcal{D}_{3}(:,:,1,2)=\left(\begin{smallmatrix}
             \mathbf{0} & \mathbf{j} \\
            \mathbf{k} & \mathbf{0}
            \end{smallmatrix}\right),\
\mathcal{D}_{3}(:,:,2,1)=\left(\begin{smallmatrix}
             \mathbf{k} & \mathbf{i} \\
            \mathbf{0} & \mathbf{0}
            \end{smallmatrix}\right),\\
\mathcal{D}_{3}(:,:,2,2)=\left(\begin{smallmatrix}
             \mathbf{i} & \mathbf{0} \\
            \mathbf{0} & \mathbf{i}
            \end{smallmatrix}\right),\
\mathcal{C}_{4}(:,:,1,1)=\left(\begin{smallmatrix}
             \mathbf{i} & \mathbf{-i} \\
            \mathbf{0} & \mathbf{0}
            \end{smallmatrix}\right),\
\mathcal{C}_{4}(:,:,1,2)=\left(\begin{smallmatrix}
             \mathbf{j} & \mathbf{-j} \\
            \mathbf{0} & \mathbf{0}
            \end{smallmatrix}\right),\\
\mathcal{C}_{4}(:,:,2,1)=\left(\begin{smallmatrix}
             \mathbf{k} & \mathbf{-k} \\
            \mathbf{0} & \mathbf{0}
            \end{smallmatrix}\right),\
 \mathcal{C}_{4}(:,:,2,2)=\left(\begin{smallmatrix}
             \mathbf{0} & \mathbf{0} \\
            \mathbf{i} & \mathbf{-i}
            \end{smallmatrix}\right),\
\mathcal{D}_{4}(:,:,1,1)=\left(\begin{smallmatrix}
             \mathbf{0} & \mathbf{0} \\
            \mathbf{j} & \mathbf{-j}
            \end{smallmatrix}\right),\\
\mathcal{D}_{4}(:,:,1,2)=\left(\begin{smallmatrix}
             \mathbf{0} & \mathbf{0} \\
            \mathbf{k} & \mathbf{-k}
            \end{smallmatrix}\right),\
\mathcal{D}_{4}(:,:,2,1)=\left(\begin{smallmatrix}
             \mathbf{i} & \mathbf{i+j} \\
            \mathbf{0} & \mathbf{0}
            \end{smallmatrix}\right),\
 \mathcal{D}_{4}(:,:,2,2)=\left(\begin{smallmatrix}
             \mathbf{j} & \mathbf{j+k} \\
            \mathbf{0} & \mathbf{0}
            \end{smallmatrix}\right),\\
\mathcal{A}_{8}(:,:,1,1)=\left(\begin{smallmatrix}
             \mathbf{2j} & \mathbf{0} \\
            \mathbf{3k} & \mathbf{0}
            \end{smallmatrix}\right),\
 \mathcal{A}_{8}(:,:,1,2)=\left(\begin{smallmatrix}
             \mathbf{i-k} & \mathbf{-k} \\
            \mathbf{0} & \mathbf{0}
            \end{smallmatrix}\right),\
\mathcal{A}_{8}(:,:,2,1)=\left(\begin{smallmatrix}
             \mathbf{0} & \mathbf{0} \\
            \mathbf{i+k} & \mathbf{-k}
            \end{smallmatrix}\right),\\
\mathcal{A}_{8}(:,:,2,2)=\left(\begin{smallmatrix}
             \mathbf{k} & \mathbf{0} \\
            \mathbf{2k} & \mathbf{0}
            \end{smallmatrix}\right),\
\mathcal{B}_{8}(:,:,1,1)=\left(\begin{smallmatrix}
             \mathbf{0} & \mathbf{j} \\
            \mathbf{2j} & \mathbf{0}
            \end{smallmatrix}\right),\
 \mathcal{B}_{8}(:,:,1,2)=\left(\begin{smallmatrix}
             \mathbf{0} & \mathbf{i} \\
            \mathbf{3i} & \mathbf{0}
            \end{smallmatrix}\right),\\
\mathcal{B}_{8}(:,:,2,1)=\left(\begin{smallmatrix}
             \mathbf{i-j} & \mathbf{0} \\
            \mathbf{0} & \mathbf{-j}
            \end{smallmatrix}\right),\
\mathcal{B}_{8}(:,:,2,2)=\left(\begin{smallmatrix}
             \mathbf{j+k} & \mathbf{0} \\
            \mathbf{0} & \mathbf{-k}
            \end{smallmatrix}\right),\
\mathcal{A}_{9}(:,:,1,1)=\left(\begin{smallmatrix}
             \mathbf{0} & \mathbf{i+j} \\
            \mathbf{0} & \mathbf{k}
            \end{smallmatrix}\right),\\
\mathcal{A}_{9}(:,:,1,2)=\left(\begin{smallmatrix}
             \mathbf{0} & \mathbf{j+k} \\
            \mathbf{j} & \mathbf{k}
            \end{smallmatrix}\right),\
\mathcal{A}_{9}(:,:,2,1)=\left(\begin{smallmatrix}
             \mathbf{i+j} & \mathbf{i} \\
            \mathbf{0} & \mathbf{0}
            \end{smallmatrix}\right),\
\mathcal{A}_{9}(:,:,2,2)=\left(\begin{smallmatrix}
             \mathbf{j+k} & \mathbf{0} \\
            \mathbf{j} & \mathbf{k}
            \end{smallmatrix}\right),\\
\mathcal{B}_{9}(:,:,1,1)=\left(\begin{smallmatrix}
             \mathbf{i+j} & \mathbf{0} \\
            \mathbf{k} & \mathbf{0}
            \end{smallmatrix}\right),\
 \mathcal{B}_{9}(:,:,1,2)=\left(\begin{smallmatrix}
             \mathbf{0} & \mathbf{i+k} \\
            \mathbf{i} & \mathbf{k}
            \end{smallmatrix}\right),\
\mathcal{B}_{9}(:,:,2,1)=\left(\begin{smallmatrix}
             \mathbf{0} & \mathbf{0} \\
            \mathbf{j-k} & \mathbf{-j}
            \end{smallmatrix}\right),\\
\mathcal{B}_{9}(:,:,2,2)=\left(\begin{smallmatrix}
             \mathbf{i+j} & \mathbf{i} \\
            \mathbf{0} & \mathbf{0}
            \end{smallmatrix}\right),\
\mathcal{H}_{3}(:,:,1,1)=\left(\begin{smallmatrix}
             \mathbf{2} & \mathbf{i} \\
            \mathbf{0} & \mathbf{0}
            \end{smallmatrix}\right),\
\mathcal{H}_{3}(:,:,1,2)=\left(\begin{smallmatrix}
             \mathbf{0} & \mathbf{-5k} \\
            \mathbf{0} & \mathbf{k}
            \end{smallmatrix}\right),\\
\mathcal{H}_{3}(:,:,2,1)=\left(\begin{smallmatrix}
             \mathbf{3} & \mathbf{0} \\
            \mathbf{-i} & \mathbf{0}
            \end{smallmatrix}\right),\
\mathcal{H}_{3}(:,:,2,2)=\left(\begin{smallmatrix}
             \mathbf{0} & \mathbf{-6} \\
            \mathbf{0} & \mathbf{-i}
            \end{smallmatrix}\right),\
\mathcal{J}_{3}(:,:,1,1)=\left(\begin{smallmatrix}
             \mathbf{4} & \mathbf{j} \\
            \mathbf{0} & \mathbf{0}
            \end{smallmatrix}\right),\\
 \mathcal{J}_{3}(:,:,1,2)=\left(\begin{smallmatrix}
             \mathbf{-i} & \mathbf{-7} \\
            \mathbf{0} & \mathbf{0}
            \end{smallmatrix}\right),\
\mathcal{J}_{3}(:,:,2,1)=\left(\begin{smallmatrix}
             \mathbf{5} & \mathbf{0} \\
            \mathbf{-j} & \mathbf{0}
            \end{smallmatrix}\right),\
\mathcal{J}_{3}(:,:,2,2)=\left(\begin{smallmatrix}
             \mathbf{j} & \mathbf{-8} \\
            \mathbf{0} & \mathbf{0}
            \end{smallmatrix}\right),\\
\mathcal{H}_{4}(:,:,1,1)=\left(\begin{smallmatrix}
             \mathbf{i} & \mathbf{j} \\
            \mathbf{0} & \mathbf{0}
            \end{smallmatrix}\right),\
\mathcal{H}_{4}(:,:,1,2)=\left(\begin{smallmatrix}
             \mathbf{j} & \mathbf{k} \\
            \mathbf{0} & \mathbf{0}
            \end{smallmatrix}\right),\
\mathcal{H}_{4}(:,:,2,1)=\left(\begin{smallmatrix}
             \mathbf{i} & \mathbf{k} \\
            \mathbf{0} & \mathbf{0}
            \end{smallmatrix}\right),\\
\mathcal{H}_{4}(:,:,2,2)=\left(\begin{smallmatrix}
             \mathbf{1} & \mathbf{0} \\
            \mathbf{2-i} & \mathbf{0}
            \end{smallmatrix}\right),\
\mathcal{J}_{4}(:,:,1,1)=\left(\begin{smallmatrix}
             \mathbf{0} & \mathbf{0} \\
            \mathbf{k} & \mathbf{i}
            \end{smallmatrix}\right),\
\mathcal{J}_{4}(:,:,1,2)=\left(\begin{smallmatrix}
             \mathbf{0} & \mathbf{2} \\
            \mathbf{0} & \mathbf{-j}
            \end{smallmatrix}\right),\\
\mathcal{J}_{4}(:,:,2,1)=\left(\begin{smallmatrix}
             \mathbf{0} & \mathbf{0} \\
            \mathbf{2-i} & \mathbf{2+j}
            \end{smallmatrix}\right),\
 \mathcal{J}_{4}(:,:,2,2)=\left(\begin{smallmatrix}
             \mathbf{0} & \mathbf{3-k} \\
            \mathbf{0} & \mathbf{3+k}
            \end{smallmatrix}\right),\\        
\mathcal{E}_{9}(:,:,1,1)=\left(\begin{smallmatrix}
             \mathbf{2+8j-3k} & \mathbf{-10-4i+2j+2k} \\
            \mathbf{-4-i-j-k} & \mathbf{-4i+15j-7k}
            \end{smallmatrix}\right),\\
\mathcal{E}_{9}(:,:,2,1)=\left(\begin{smallmatrix}
             \mathbf{-6+4i-2j+3k} & \mathbf{12+13i-5j-k} \\
            \mathbf{2+8i+2j+10k} & \mathbf{-6+2i+7j+k}
            \end{smallmatrix}\right),\\
\mathcal{E}_{10}(:,:,1,1)=\left(\begin{smallmatrix}
             \mathbf{5+61i-37j-9k} & \mathbf{154+65i-136j-191k} \\
            \mathbf{26-4i-36j-18k} & \mathbf{-48+160i+16j-41k}
            \end{smallmatrix}\right),\\
\mathcal{E}_{10}(:,:,1,2)=\left(\begin{smallmatrix}
             \mathbf{12-45i-75j+7k} & \mathbf{249-133i-65j+171k} \\
            \mathbf{50-56i+5j+31k} & \mathbf{58+45i-176j-50k}
            \end{smallmatrix}\right),\
,\\
\mathcal{E}_{10}(:,:,2,1)=\left(\begin{smallmatrix}
             \mathbf{10-73i+36j-13k} & \mathbf{1-34i+7j-27k} \\
            \mathbf{-87-95i+186j+117k} & \mathbf{98-141i+14j+21k}
            \end{smallmatrix}\right),\\
\mathcal{E}_{10}(:,:,2,2)=\left(\begin{smallmatrix}
             \mathbf{-53+3i+24j+26k} & \mathbf{7+23i+42j-55k} \\
            \mathbf{-119+143i-15j+63k} & \mathbf{-143-37i+78j+36k}
            \end{smallmatrix}\right),
\end{gathered}
\end{align*}
\end{small}
\begin{small}
\begin{align*}
\begin{gathered}                
\mathcal{E}_{9}(:,:,1,2)=\left(\begin{smallmatrix}
             \mathbf{-6-8i-3j} & \mathbf{15+10i-7j+7k} \\
            \mathbf{8+8i-2j+14k} & \mathbf{-7-10i+2j+17k}
            \end{smallmatrix}\right),\
\mathcal{E}_{9}(:,:,2,2)=\left(\begin{smallmatrix}
             \mathbf{4+4i+6j+2k} & \mathbf{-9+3i+3j+2k} \\
            \mathbf{-6+2i-3k} & \mathbf{3+5i+15j-k}
            \end{smallmatrix}\right).
\end{gathered}            
\end{align*}
\end{small}
Direct computations yields
\begin{small}
\begin{align*}
\begin{gathered}     
\mathcal{R}_{\mathcal{A}_{4}}*_{2}\mathcal{E}_{5}=0,\ \mathcal{E}_{6}*_{2}\mathcal{L}_{\mathcal{B}_{4}}=0,\
\mathcal{R}_{\mathcal{A}_{5}}*_{2}\mathcal{E}_{7}=0,\ \mathcal{E}_{8}*_{2}\mathcal{L}_{\mathcal{B}_{5}}=0,\\
\mathcal{R}_{\mathcal{M}_{11}}*_{2}\mathcal{R}_{\mathcal{\widehat{A}}_{6}}*_{2}\mathcal{\widehat{E}}_{9}=0,\
\mathcal{\widehat{E}}_{9}*_{2}\mathcal{L}_{\mathcal{B}_{6}}*_{2}\mathcal{L}_{\mathcal{N}_{11}}=0,\
 \mathcal{R}_{\mathcal{A}_{7}}*_{2}\mathcal{\widehat{E}}_{9}*_{2}\mathcal{L}_{\mathcal{B}_{6}}=0,\\
\mathcal{R}_{\mathcal{M}_{22}}*_{2}\mathcal{R}_{\mathcal{\widehat{A}}_{8}}*_{2}\mathcal{\widehat{E}}_{10}=0,\
\mathcal{\widehat{E}}_{10}*_{2}\mathcal{L}_{\mathcal{B}_{8}}*_{2}\mathcal{L}_{\mathcal{N}_{22}}=0,\
 \mathcal{R}_{\mathcal{A}_{9}}*_{2}\mathcal{\widehat{E}}_{10}*_{2}\mathcal{L}_{\mathcal{B}_{8}}=0,\\
\mathcal{R}_{\mathcal{A}_{3}}*_{2}\mathcal{E}_{3}=0,\ \mathcal{E}_{4}*_{2}\mathcal{L}_{\mathcal{B}_{3}}=0,\
\mathcal{A}_{3}*_{2}\mathcal{E}_{3}=\mathcal{E}_{4}*_{2}\mathcal{B}_{3},\\
\mathcal{R}_{\mathcal{\widehat{A}}_{4}}*_{2}\mathcal{G}*_{2}\mathcal{L}_{\mathcal{D}_{6}}=0,\
\mathcal{R}_{\mathcal{C}_{6}}*_{2}\mathcal{G}*_{2}\mathcal{L}_{\mathcal{\widehat{B}}_{4}}=0,\
\mathcal{R}_{\mathcal{S}_{1}}*_{2}\mathcal{R}_{\mathcal{\widehat{A}}_{4}}*_{N}\mathcal{G}=0,\\
\mathcal{G}*_{2}\mathcal{L}_{\mathcal{\widehat{B}}_{4}}*_{2}\mathcal{L}_{\mathcal{N}_{1}}=0,\
\mathcal{R}_{\mathcal{\widehat{A}}_{5}}*_{N}\mathcal{F}*_{2}\mathcal{L}_{\mathcal{D}_{7}}=0,\
\mathcal{R}_{\mathcal{C}_{7}}*_{2}\mathcal{F}*_{M}\mathcal{L}_{\mathcal{\widehat{B}}_{5}}=0,\\
\mathcal{R}_{\mathcal{S}_{2}}*_{2}\mathcal{R}_{\mathcal{\widehat{A}}_{5}}*_{2}\mathcal{F}=0,\
\mathcal{F}*_{2}\mathcal{L}_{\mathcal{\widehat{B}}_{5}}*_{2}\mathcal{L}_{\mathcal{N}_{2}}=0,\
\mathcal{R}_{\mathcal{M}}*_{2}\mathcal{R}_{\mathcal{A}}*_{2}\mathcal{E}=0,\\
\mathcal{R}_{\mathcal{A}}*_{2}\mathcal{E}*_{2}\mathcal{L}_{\mathcal{D}}=0,\
\mathcal{E}*_{2}\mathcal{L}_{\mathcal{B}}*_{2}\mathcal{L}_{\mathcal{N}}=0,\
\mathcal{R}_{\mathcal{C}}*_{2}\mathcal{E}*_{2}\mathcal{L}_{\mathcal{B}}=0,\\
\mathcal{R}_{\mathcal{A}_{1}}*_{2}\mathcal{E}_{1}=0,\ \mathcal{E}_{1}*_{2}\mathcal{L}_{\mathcal{B}_{1}}=0,\
\mathcal{R}{\mathcal{A}_{2}}*_{2}\mathcal{E}_{2}=0,\ \mathcal{E}_{2}*_{2}\mathcal{B}_{2}=0,\\
\mathcal{R}_{\mathcal{A}_{33}}*_{2}\mathcal{E}_{33}=0,\ \mathcal{E}_{33}*_{2}\mathcal{L}_{\mathcal{B}_{33}}=0,\
\mathcal{R}{\mathcal{A}_{55}}*_{2}\mathcal{E}_{55}=0,\ \mathcal{E}_{55}*_{2}\mathcal{L}_{\mathcal{B}_{55}}=0,\\
\mathcal{R}_{\mathcal{A}_{77}}*_{2}\mathcal{E}_{77}=0,\ \mathcal{E}_{77}*_{2}\mathcal{L}_{\mathcal{B}_{77}}=0,\
\mathcal{R}_{\mathcal{A}_{99}}*_{2}\mathcal{E}_{99}=0,\ \mathcal{E}_{99}*_{2}\mathcal{L}_{\mathcal{B}_{99}}=0,\\
\mathcal{R}_{\mathcal{\widetilde{A}}}*_{2}\mathcal{\widetilde{E}}*_{2}\mathcal{L}_{\widetilde{B}}=0.
\end{gathered}      
\end{align*}
\end{small}
Consequently, \eqref{1.5} is solvable. In that case, the general solution to \eqref{1.5} can  be expressed as
\begin{small}
\begin{align*}
\begin{gathered}
\mathcal{X}_{3}(:,:,1,1)=\left(\begin{smallmatrix}
             \mathbf{0} & \mathbf{i} \\
            \mathbf{j} & \mathbf{0}
           \end{smallmatrix}\right),\
\mathcal{X}_{3}(:,:,1,2)=\left(\begin{smallmatrix}
             \mathbf{0} & \mathbf{2j} \\
            \mathbf{k} & \mathbf{0}
           \end{smallmatrix}\right),\
\mathcal{X}_{3}(:,:,2,1)=\left(\begin{smallmatrix}
             \mathbf{-i+k} & \mathbf{0} \\
            \mathbf{0} & \mathbf{j}
           \end{smallmatrix}\right),\\
\mathcal{X}_{3}(:,:,2,2)=\left(\begin{smallmatrix}
             \mathbf{j-k} & \mathbf{0} \\
            \mathbf{0} & \mathbf{i}
           \end{smallmatrix}\right),\
\mathcal{Y}_{3}(:,:,1,1)=\left(\begin{smallmatrix}
             \mathbf{-1} & \mathbf{-j+k} \\
            \mathbf{0} & \mathbf{0}
           \end{smallmatrix}\right),\
\mathcal{Y}_{3}(:,:,1,2)=\left(\begin{smallmatrix}
             \mathbf{-3i} & \mathbf{0} \\
            \mathbf{5} & \mathbf{0}
           \end{smallmatrix}\right),\\
\mathcal{Y}_{3}(:,:,2,1)=\left(\begin{smallmatrix}
             \mathbf{2i} & \mathbf{-2} \\
            \mathbf{0} & \mathbf{0}
           \end{smallmatrix}\right),\
 \mathcal{Y}_{3}(:,:,2,2)=\left(\begin{smallmatrix}
             \mathbf{i} & \mathbf{0} \\
            \mathbf{4} & \mathbf{-k}
           \end{smallmatrix}\right),\
\mathcal{X}_{1}(:,:,1,1)=\left(\begin{smallmatrix}
             \mathbf{-2} & \mathbf{0} \\
            \mathbf{i} & \mathbf{0}
           \end{smallmatrix}\right),\\
\mathcal{X}_{1}(:,:,1,2)=\left(\begin{smallmatrix}
             \mathbf{0} & \mathbf{2-k} \\
            \mathbf{0} & \mathbf{k}
           \end{smallmatrix}\right),\
\mathcal{X}_{1}(:,:,2,1)=\left(\begin{smallmatrix}
             \mathbf{1+j} & \mathbf{0} \\
            \mathbf{0} & \mathbf{-j}
           \end{smallmatrix}\right),\
 \mathcal{X}_{1}(:,:,2,2)=\left(\begin{smallmatrix}
             \mathbf{0} & \mathbf{0} \\
            \mathbf{2+k} & \mathbf{0}
           \end{smallmatrix}\right),\\
\mathcal{X}_{2}(:,:,1,1)=\left(\begin{smallmatrix}
             \mathbf{k} & \mathbf{0} \\
            \mathbf{0} & \mathbf{i}
           \end{smallmatrix}\right),\
\mathcal{X}_{2}(:,:,1,2)=\left(\begin{smallmatrix}
             \mathbf{j-1} & \mathbf{j} \\
            \mathbf{0} & \mathbf{1}
           \end{smallmatrix}\right),\
\mathcal{X}_{2}(:,:,2,1)=\left(\begin{smallmatrix}
             \mathbf{1} & \mathbf{2} \\
            \mathbf{0} & \mathbf{i}
           \end{smallmatrix}\right),\\
\mathcal{X}_{2}(:,:,2,2)=\left(\begin{smallmatrix}
             \mathbf{1} & \mathbf{3} \\
            \mathbf{0} & \mathbf{j}
           \end{smallmatrix}\right),\
\mathcal{Y}_{1}(:,:,1,1)=\left(\begin{smallmatrix}
             \mathbf{0} & \mathbf{0} \\
            \mathbf{2i} & \mathbf{0}
           \end{smallmatrix}\right),\
\mathcal{Y}_{1}(:,:,1,2)=\left(\begin{smallmatrix}
             \mathbf{i-k} & \mathbf{0} \\
            \mathbf{0} & \mathbf{2i}
           \end{smallmatrix}\right),\\
\mathcal{Y}_{1}(:,:,2,1)=\left(\begin{smallmatrix}
             \mathbf{0} & \mathbf{3i} \\
            \mathbf{0} & \mathbf{3j}
           \end{smallmatrix}\right),\
 \mathcal{Y}_{1}(:,:,2,2)=\left(\begin{smallmatrix}
             \mathbf{5i} & \mathbf{4j} \\
            \mathbf{0} & \mathbf{0}
           \end{smallmatrix}\right),\
\mathcal{Y}_{2}(:,:,1,1)=\left(\begin{smallmatrix}
             \mathbf{2} & \mathbf{i} \\
            \mathbf{0} & \mathbf{0}
           \end{smallmatrix}\right),\\
\mathcal{Y}_{2}(:,:,1,2)=\left(\begin{smallmatrix}
             \mathbf{3-k} & \mathbf{0} \\
            \mathbf{0} & \mathbf{i}
           \end{smallmatrix}\right),\
\mathcal{Y}_{2}(:,:,2,1)=\left(\begin{smallmatrix}
             \mathbf{0} & \mathbf{0} \\
            \mathbf{i} & \mathbf{j}
           \end{smallmatrix}\right),\
 \mathcal{Y}_{2}(:,:,2,2)=\left(\begin{smallmatrix}
             \mathbf{0} & \mathbf{0} \\
            \mathbf{j} & \mathbf{k}
           \end{smallmatrix}\right),\\
\mathcal{W}(:,:,1,1)=\left(\begin{smallmatrix}
             \mathbf{i-k} & \mathbf{0} \\
            \mathbf{0} & \mathbf{0}
           \end{smallmatrix}\right),\
\mathcal{W}(:,:,1,2)=\left(\begin{smallmatrix}
             \mathbf{j-k} & \mathbf{0} \\
            \mathbf{0} & \mathbf{0}
           \end{smallmatrix}\right),\
\mathcal{W}(:,:,2,1)=\left(\begin{smallmatrix}
             \mathbf{1+i} & \mathbf{0} \\
            \mathbf{0} & \mathbf{1-i}
           \end{smallmatrix}\right),\\
\mathcal{W}(:,:,2,2)=\left(\begin{smallmatrix}
             \mathbf{0} & \mathbf{0} \\
            \mathbf{0} & \mathbf{2j}
           \end{smallmatrix}\right).
\end{gathered}
\end{align*}
\end{small}
\end{example}

\section{\textbf{Some implementations of the main system}}
\begin{remark}
\label{system 22t1}
Set $\mathcal{A}_{4}=\mathcal{B}_{4}=\mathcal{A}_{5}=\mathcal{B}_{5}=0$ in the system \eqref{1.5}, wel obtain the solvability conditions and the general solution for the system of tensor equations:
\begin{small}
\begin{align}
\label{1.6a}
\left\{
\begin{array}{rll}
\begin{gathered}
\mathcal{A}_{1}*_{N}\mathcal{X}_{3}*_{M}\mathcal{B}_{1}=\mathcal{E}_{1},\\
\mathcal{A}_{2}*_{N}\mathcal{Y}_{3}*_{M}\mathcal{B}_{2}=\mathcal{E}_{2},\\
\mathcal{A}_{3}*_{N}\mathcal{W}=\mathcal{E}_{3},\ \mathcal{W}*_{M}\mathcal{B}_{3}=\mathcal{E}_{4},\\
\mathcal{A}_{6}*_{N}\mathcal{X}_{1}*_{M}\mathcal{B}_{6}+\mathcal{A}_{7}*_{N}\mathcal{X}_{2}*_{M}\mathcal{B}_{7}\\
+\mathcal{A}_{7}*_{N}(\mathcal{C}_{3}*_{N}\mathcal{X}_{3}*_{M}\mathcal{D}_{3}
+\mathcal{C}_{4}*_{N}\mathcal{W}*_{M}\mathcal{D}_{4})*_{M}\mathcal{B}_{6}=\mathcal{E}_{9},\\
\mathcal{A}_{8}*_{N}\mathcal{Y}_{1}*_{M}\mathcal{B}_{8}+\mathcal{A}_{9}*_{N}\mathcal{Y}_{2}*_{M}\mathcal{B}_{9}\\
+\mathcal{A}_{9}*_{N}(\mathcal{H}_{3}*_{N}\mathcal{Y}_{3}*_{M}\mathcal{J}_{3}
+\mathcal{H}_{4}*_{N}\mathcal{W}*_{M}\mathcal{J}_{4})*_{M}\mathcal{B}_{8}=\mathcal{E}_{10},
\end{gathered}
\end{array}
  \right.
\end{align}
\end{small}
\end{remark}
Now, we consider the solvability conditions and the general solution to the system \eqref{1.6}, as a particular case of \eqref{1.5}.
\begin{theorem}\label{system 3.33AAAAA}
Consider the  system of tensor equations \eqref{1.5}, where
\begin{small}
\begin{align*}
&\mathcal{A}_{1} \in \mathbb{H}^{I(N)\times J(N)},\ \mathcal{A}_{2} \in \mathbb{H}^{I(N)\times Q(N)},\ \mathcal{A}_{3} \in \mathbb{H}^{I(N)\times P(N)},
\ \mathcal{A}_{6} \in \mathbb{H}^{A(N)\times E(N)},\\
&\mathcal{A}_{8} \in \mathbb{H}^{C(N)\times R(N)},\
 \mathcal{B}_{1} \in \mathbb{H}^{L(M)\times K(M)},\ \mathcal{B}_{2} \in \mathbb{H}^{S(M)\times K(M)},\ \mathcal{B}_{3} \in \mathbb{H}^{T(M)\times K(M)},\\
& \mathcal{B}_{7} \in \mathbb{H}^{H(N)\times B(N)},\ \mathcal{B}_{9} \in \mathbb{H}^{O(M)\times D(M)},\ \mathcal{C}_{3} \in \mathbb{H}^{I(N)\times J(N)},\ \mathcal{C}_{4} \in \mathbb{H}^{I(N)\times P(N)},\\
&\mathcal{D}_{3} \in \mathbb{H}^{L(M)\times K(M)},\ \mathcal{D}_{4} \in \mathbb{H}^{T(M)\times K(M)},\
\mathcal{H}_{3} \in \mathbb{H}^{I(N)\times Q(N)},\ \mathcal{H}_{4} \in \mathbb{H}^{I(N)\times P(N)},\\
&\mathcal{J}_{3} \in \mathbb{H}^{S(M)\times K(M)},\ \mathcal{J}_{4} \in \mathbb{H}^{T(M)\times K(M)},\
\mathcal{E}_{1} \in \mathbb{H}^{I(N)\times K(M)},\ \mathcal{E}_{2} \in \mathbb{H}^{I(N)\times K(M)},\\
&\mathcal{E}_{3} \in \mathbb{H}^{I(N)\times T(M)},\ \mathcal{E}_{4} \in \mathbb{H}^{P(N)\times K(M)},\
\mathcal{E}_{9} \in \mathbb{H}^{A(N)\times B(M)},\ \mathcal{E}_{10} \in \mathbb{H}^{C(N)\times D(M)}
\end{align*}
\end{small}
are given tensors over $\mathbb{H}$. Set
\begin{small}
\begin{subequations}
\begin{align*}
\begin{gathered}
\mathcal{\widehat{A}}_{4}=\mathcal{R}_{\mathcal{A}_{6}}*_{N}\mathcal{C}_{3},\ \mathcal{\widehat{C}}_{4}=\mathcal{R}_{\mathcal{A}_{6}}*_{N}\mathcal{C}_{4},\ \mathcal{\widehat{B}}_{4}=\mathcal{D}_{3}*_{M}\mathcal{L}_{\mathcal{B}_{7}},\\
\mathcal{\widehat{D}}_{4}=\mathcal{D}_{4}*_{M}\mathcal{L}_{\mathcal{B}_{7}},\ \mathcal{\widehat{P}}=\mathcal{R}_{\mathcal{A}_{6}}*_{N}\mathcal{E}_{9}*_{M}\mathcal{L}_{\mathcal{B}_{7}},\ \mathcal{\widehat{A}}_{5}=\mathcal{R}_{\mathcal{A}_{8}}*_{N}\mathcal{H}_{3},\  \mathcal{\widehat{C}}_{5}=\mathcal{R}_{\mathcal{A}_{8}}\\
*_{N}\mathcal{H}_{4},\
\mathcal{\widehat{B}}_{5}=\mathcal{J}_{3}*_{M}\mathcal{L}_{\mathcal{B}_{9}},\
\mathcal{\widehat{D}}_{5}=\mathcal{J}_{4}*_{M}\mathcal{L}_{\mathcal{B}_{9}},\
\mathcal{\widehat{Q}}=\mathcal{R}_{\mathcal{A}_{8}}*_{N}\mathcal{E}_{10}*_{M}\mathcal{L}_{\mathcal{B}_{9}},\\
\mathcal{C}_{6}=\mathcal{\widehat{C}}_{4}*_{N}\mathcal{L}_{\mathcal{A}_{3}},\ \mathcal{D}_{6}=\mathcal{R}_{\mathcal{B}_{3}}*_{M}\mathcal{\widehat{D}}_{4},\
\mathcal{C}_{7}=\mathcal{\widehat{C}}_{5}*_{N}\mathcal{L}_{\mathcal{A}_{3}},\ \mathcal{D}_{7}=\mathcal{R}_{\mathcal{B}_{3}}*_{M}\mathcal{\widehat{D}}_{5},\\
\mathcal{G}=\mathcal{\widehat{P}}-\mathcal{\widehat{C}}_{4}*_{N}\mathcal{A}_{3}^{\dagger}*_{N}\mathcal{E}_{3}*_{M}\mathcal{\widehat{D}}_{4}
-\mathcal{\widehat{C}}_{4}*_{N}\mathcal{L}_{\mathcal{A}_{3}}*_{N}\mathcal{E}_{4}*_{M}\mathcal{B}_{3}^{\dagger}*_{M}\mathcal{\widehat{D}}_{4},\\
\mathcal{F}=\mathcal{\widehat{Q}}-\mathcal{\widehat{C}}_{5}*_{N}\mathcal{A}_{3}^{\dagger}*_{N}\mathcal{E}_{3}*_{M}\mathcal{\widehat{D}}_{5}
-\mathcal{\widehat{C}}_{5}*_{N}\mathcal{L}_{\mathcal{A}_{3}}*_{N}\mathcal{E}_{4}*_{M}\mathcal{B}_{3}^{\dagger}*_{M}\mathcal{\widehat{D}}_{5},\\
\mathcal{M}_{1}=\mathcal{R}_{\mathcal{\widehat{A}}_{4}}*_{M}\mathcal{C}_{6},\ \mathcal{N}_{1}=\mathcal{D}_{6}*_{M}\mathcal{L}_{\mathcal{\widehat{B}}_{4}},\
\mathcal{S}_{1}=\mathcal{C}_{6}*_{N}\mathcal{L}_{\mathcal{M}_{1}},\\
\mathcal{M}_{2}=\mathcal{R}_{\mathcal{\widehat{A}}_{5}}*_{M}\mathcal{C}_{7},\ \mathcal{N}_{2}=\mathcal{D}_{7}*_{M}\mathcal{L}_{\mathcal{\widehat{B}}_{5}},\
\mathcal{S}_{2}=\mathcal{C}_{7}*_{N}\mathcal{L}_{\mathcal{M}_{2}},\\
\mathcal{A}_{11}=\begin{pmatrix}\mathcal{L}_{\mathcal{M}_{1}}*_{N}\mathcal{L}_{\mathcal{S}_{1}}& \mathcal{L}_{\mathcal{M}_{2}}*_{N}\mathcal{L}_{\mathcal{S}_{2}}\end{pmatrix},\
\mathcal{B}_{11}=\begin{pmatrix}\mathcal{R}_{\mathcal{D}_{6}}\\ \mathcal{R}_{\mathcal{D}_{7}}\end{pmatrix},\\
\begin{array}{l}
\mathcal{E}_{11}=\mathcal{M}_{2}^{\dagger}*_{N}\mathcal{F}*_{M}\mathcal{D}_{7}^{\dagger}
+\mathcal{S}_{2}^{\dagger}*_{N}\mathcal{S}_{2}*_{N}\mathcal{C}_{7}^{\dagger}
*_{N}\mathcal{F}*_{M}\mathcal{N}_{2}^{\dagger}
-\mathcal{M}_{1}^{\dagger}*_{N}\mathcal{G}*_{M}\mathcal{D}_{6}^{\dagger}\\
\ \ \ \ \ \ \ -\mathcal{S}_{1}^{\dagger}*_{N}\mathcal{S}_{1}*_{N}\mathcal{C}_{6}^{\dagger}*_{N}\mathcal{G}*_{M}\mathcal{N}_{1}^{\dagger},\ \mathcal{A}=\mathcal{R}_{\mathcal{A}_{11}}*_{N}\mathcal{L}_{\mathcal{M}_{1}},
\end{array}\\
\
\mathcal{B}=\mathcal{R}_{\mathcal{N}_{1}}*_{M}\mathcal{L}_{\mathcal{B}_{11}},\
\mathcal{C}=\mathcal{R}_{\mathcal{A}_{11}}*_{N}\mathcal{L}_{\mathcal{M}_{2}},\
\mathcal{D}=\mathcal{R}_{\mathcal{N}_{2}}*_{M}\mathcal{L}_{\mathcal{B}_{11}},\ \mathcal{E}=\mathcal{R}_{\mathcal{A}_{11}}\\
*_{N}\mathcal{E}_{11}*_{M}\mathcal{L}_{\mathcal{B}_{11}},\
\mathcal{M}=\mathcal{R}_{\mathcal{A}}*_{N}\mathcal{C},\
\mathcal{N}=\mathcal{D}*_{M}\mathcal{L}_{\mathcal{B}},\
\mathcal{S}=\mathcal{C}*_{N}\mathcal{L}_{\mathcal{M}},\\
\mathcal{A}_{22}=\begin{pmatrix}\mathcal{L}_{\mathcal{A}_{1}}& \mathcal{L}_{\mathcal{\widehat{A}}_{4}}\end{pmatrix},\
\mathcal{B}_{22}=\begin{pmatrix}\mathcal{R}_{\mathcal{B}_{1}}\\ \mathcal{R}_{\mathcal{\widehat{B}}_{4}}\end{pmatrix},\
\mathcal{C}_{22}=\mathcal{\widehat{A}}_{4}^{\dagger}*_{N}\mathcal{S}_{1},\
\mathcal{D}_{22}=\mathcal{R}_{\mathcal{N}_{1}}*_{M}\\
\begin{array}{l}
\mathcal{D}_{6}*_{N}\mathcal{\widehat{B}}_{4}^{\dagger},\ \mathcal{E}_{22}=\mathcal{\widehat{A}}_{4}^{\dagger}*_{N}\mathcal{G}*_{M}\mathcal{\widehat{B}}_{4}^{\dagger}
-\mathcal{A}_{1}^{\dagger}*_{N}\mathcal{E}_{1}*_{M}\mathcal{B}_{1}^{\dagger}
-\mathcal{\widehat{A}}_{4}^{\dagger}*_{N}\mathcal{S}_{1}*_{N}\mathcal{C}_{6}^{\dagger}\\
\ \ \ \ \ \ *_{N}\mathcal{G}*_{M}\mathcal{N}_{1}*_{M}\mathcal{D}_{6}*_{M}\mathcal{\widehat{B}}_{4}^{\dagger}-\mathcal{\widehat{A}}_{4}^{\dagger}*_{N}\mathcal{C}_{6}*_{N}\mathcal{M}_{1}^{\dagger}
*_{N}\mathcal{G}*_{M}\mathcal{\widehat{B}}_{4}^{\dagger},
\end{array}\\
\mathcal{A}_{33}=\mathcal{R}_{\mathcal{A}_{22}}*_{N}\mathcal{C}_{22},\
\mathcal{B}_{33}=\mathcal{D}_{22}*_{M}\mathcal{L}_{\mathcal{B}_{22}},\
\mathcal{E}_{33}=\mathcal{R}_{\mathcal{A}_{22}}*_{N}\mathcal{E}_{22}*_{M}\mathcal{L}_{\mathcal{B}_{22}},\\
\mathcal{A}_{44}=\begin{pmatrix}\mathcal{L}_{\mathcal{A}_{2}}& \mathcal{L}_{\mathcal{\widehat{A}}_{5}}\end{pmatrix},\
\mathcal{B}_{44}=\begin{pmatrix}\mathcal{R}_{\mathcal{B}_{2}}\\ \mathcal{R}_{\mathcal{\widehat{B}}_{5}}\end{pmatrix},\
\mathcal{C}_{44}=\mathcal{\widehat{A}}_{5}^{\dagger}*_{N}\mathcal{S}_{2},\ \mathcal{D}_{44}=\\
\begin{array}{l}
\mathcal{R}_{\mathcal{N}_{2}}*_{M}\mathcal{D}_{7}*_{N}\mathcal{\widehat{B}}_{5}^{\dagger},\ \mathcal{E}_{44}=\mathcal{\widehat{A}}_{5}^{\dagger}*_{N}\mathcal{F}*_{M}\mathcal{\widehat{B}}_{5}^{\dagger}
-\mathcal{A}_{2}^{\dagger}*_{N}\mathcal{E}_{2}*_{M}\mathcal{B}_{2}^{\dagger}
-\mathcal{\widehat{A}}_{5}^{\dagger}*_{N}\mathcal{S}_{2}\\
\ \ \ \ \ \ *_{N}\mathcal{C}_{7}^{\dagger}*_{N}\mathcal{F}*_{M}\mathcal{N}_{2}*_{M}\mathcal{D}_{7}*_{M}\mathcal{\widehat{B}}_{5}^{\dagger}-\mathcal{\widehat{A}}_{5}^{\dagger}*_{N}\mathcal{C}_{7}*_{N}\mathcal{M}_{2}^{\dagger}*_{N}\mathcal{F}
*_{M}\mathcal{\widehat{B}}_{5}^{\dagger},
\end{array}\\
\mathcal{A}_{55}=\mathcal{R}_{\mathcal{A}_{44}}*_{N}\mathcal{C}_{44},\
\mathcal{B}_{55}=\mathcal{D}_{44}*_{M}\mathcal{L}_{\mathcal{B}_{44}},\
\mathcal{E}_{55}=\mathcal{R}_{\mathcal{A}_{44}}*_{N}\mathcal{E}_{44}*_{M}\mathcal{L}_{\mathcal{B}_{44}},
\end{gathered}
\end{align*}
\begin{align*}
\begin{gathered}
\mathcal{A}_{66}=\begin{pmatrix}\mathcal{L}_{\mathcal{A}}& \mathcal{L}_{\mathcal{A}_{33}}\end{pmatrix},\
\mathcal{B}_{66}=\begin{pmatrix}\mathcal{R}_{\mathcal{B}}\\ \mathcal{R}_{\mathcal{B}_{33}}\end{pmatrix},\
\mathcal{C}_{66}=\mathcal{A}^{\dagger}*_{N}\mathcal{S},\
\mathcal{D}_{66}=\\
\begin{array}{l}
\mathcal{R}_{\mathcal{N}}*_{M}\mathcal{D}*_{M}\mathcal{B}^{\dagger},\ \mathcal{E}_{66}=\mathcal{A}_{33}^{\dagger}*_{N}\mathcal{E}_{33}*_{M}\mathcal{B}_{33}^{\dagger}
-\mathcal{A}^{\dagger}*_{N}\mathcal{E}*_{M}\mathcal{B}^{\dagger}
+\mathcal{A}^{\dagger}*_{N}\mathcal{S}\\
\ \ \ \ \ \ *_{N}\mathcal{C}^{\dagger}*_{N}\mathcal{E}*_{M}\mathcal{N}^{\dagger}*_{M}\mathcal{D}*_{N}\mathcal{B}^{\dagger}+\mathcal{A}^{\dagger}*_{N}\mathcal{C}*_{N}\mathcal{M}^{\dagger}*_{N}\mathcal{E}
*_{M}\mathcal{B}^{\dagger},
\end{array}\\
\mathcal{A}_{77}=\mathcal{R}_{\mathcal{A}_{66}}*_{N}\mathcal{C}_{66},\
\mathcal{B}_{77}=\mathcal{D}_{66}*_{M}\mathcal{L}_{\mathcal{B}_{66}},\
\mathcal{E}_{77}=\mathcal{R}_{\mathcal{A}_{66}}*_{N}\mathcal{E}_{66}*_{M}\mathcal{L}_{\mathcal{B}_{66}},\\
\mathcal{A}_{88}=\begin{pmatrix}\mathcal{L}_{\mathcal{M}}*_{N}\mathcal{L}_{\mathcal{S}}& \mathcal{L}_{\mathcal{A}_{55}}\end{pmatrix},\
\mathcal{B}_{88}=\begin{pmatrix}\mathcal{R}_{\mathcal{D}}\\ \mathcal{R}_{\mathcal{B}_{55}}\end{pmatrix},\
\mathcal{C}_{88}=\mathcal{L}_{\mathcal{M}},\
\mathcal{D}_{88}=\mathcal{R}_{\mathcal{N}},\ \mathcal{E}_{88}\\
=\mathcal{A}_{55}^{\dagger}*_{N}\mathcal{E}_{55}*_{M}\mathcal{B}_{55}^{\dagger}
-\mathcal{M}^{\dagger}*_{N}\mathcal{E}*_{M}\mathcal{B}^{\dagger}
-\mathcal{S}^{\dagger}*_{N}\mathcal{S}*_{N}\mathcal{C}^{\dagger}*_{N}\mathcal{E}*_{M}\mathcal{N}^{\dagger},\\
\mathcal{A}_{99}=\mathcal{R}_{\mathcal{A}_{88}}*_{N}\mathcal{C}_{88},\
\mathcal{B}_{99}=\mathcal{D}_{88}*_{M}\mathcal{L}_{\mathcal{B}_{88}},\
\mathcal{E}_{99}=\mathcal{R}_{\mathcal{A}_{88}}*_{N}\mathcal{E}_{88}*_{M}\mathcal{L}_{\mathcal{B}_{88}},\\
\mathcal{\widetilde{A}}=\begin{pmatrix}\mathcal{L}_{\mathcal{A}_{77}}& -\mathcal{L}_{\mathcal{A}_{99}}\end{pmatrix},\
\mathcal{\widetilde{B}}=\begin{pmatrix}\mathcal{R}_{\mathcal{B}_{77}}\\ -\mathcal{R}_{\mathcal{B}_{99}}\end{pmatrix},\\
\mathcal{\widetilde{E}}=\mathcal{A}_{77}^{\dagger}*_{N}\mathcal{E}_{77}*_{M}\mathcal{B}_{77}^{\dagger}
-\mathcal{A}_{99}^{\dagger}*_{N}\mathcal{E}_{99}*_{M}\mathcal{B}_{99}^{\dagger},\
\end{gathered}
\end{align*}
\end{subequations}
\end{small}
then the system \eqref{1.6} is solvable if and only if
\begin{small}
\begin{align*}
&\mathcal{R}_{\mathcal{A}_{3}}*_{N}\mathcal{E}_{3}=0,\ \mathcal{E}_{4}*_{M}\mathcal{L}_{\mathcal{B}_{3}}=0,\
\mathcal{A}_{3}*_{N}\mathcal{E}_{3}=\mathcal{E}_{4}*_{M}\mathcal{B}_{3},\\
&\mathcal{R}_{\mathcal{\widehat{A}}_{4}}*_{N}\mathcal{G}*_{M}\mathcal{L}_{\mathcal{D}_{6}}=0,\
\mathcal{R}_{\mathcal{C}_{6}}*_{N}\mathcal{G}*_{M}\mathcal{L}_{\mathcal{\widehat{B}}_{4}}=0,\
\mathcal{R}_{\mathcal{S}_{1}}*_{N}\mathcal{R}_{\mathcal{\widehat{A}}_{4}}*_{N}\mathcal{G}=0,\\
&\mathcal{G}*_{M}\mathcal{L}_{\mathcal{\widehat{B}}_{4}}*_{M}\mathcal{L}_{\mathcal{N}_{1}}=0,\
\mathcal{R}_{\mathcal{\widehat{A}}_{5}}*_{N}\mathcal{F}*_{M}\mathcal{L}_{\mathcal{D}_{7}}=0,\
\mathcal{R}_{\mathcal{C}_{7}}*_{N}\mathcal{F}*_{M}\mathcal{L}_{\mathcal{\widehat{B}}_{5}}=0,\\
&\mathcal{R}_{\mathcal{S}_{2}}*_{N}\mathcal{R}_{\mathcal{\widehat{A}}_{5}}*_{N}\mathcal{F}=0,\
\mathcal{F}*_{M}\mathcal{L}_{\mathcal{\widehat{B}}_{5}}*_{M}\mathcal{L}_{\mathcal{N}_{2}}=0,\
\mathcal{R}_{\mathcal{M}}*_{N}\mathcal{R}_{\mathcal{A}}*_{N}\mathcal{E}=0,\\
&\mathcal{R}_{\mathcal{A}}*_{N}\mathcal{E}*_{M}\mathcal{L}_{\mathcal{D}}=0,\
\mathcal{E}*_{M}\mathcal{L}_{\mathcal{B}}*_{M}\mathcal{L}_{\mathcal{N}}=0,\
\mathcal{R}_{\mathcal{C}}*_{N}\mathcal{E}*_{M}\mathcal{L}_{\mathcal{B}}=0,\\
&\mathcal{R}_{\mathcal{A}_{1}}*_{N}\mathcal{E}_{1}=0,\ \mathcal{E}_{1}*_{M}\mathcal{L}_{\mathcal{B}_{1}}=0,\
\mathcal{R}_{\mathcal{A}_{2}}*_{N}\mathcal{E}_{2}=0,\ \mathcal{E}_{2}*_{M}\mathcal{L}_{\mathcal{B}_{2}}=0,\\
&\mathcal{R}_{\mathcal{A}_{33}}*_{N}\mathcal{E}_{33}=0,\ \mathcal{E}_{33}*_{M}\mathcal{L}_{\mathcal{B}_{33}}=0,\
\mathcal{R}_{\mathcal{A}_{55}}*_{N}\mathcal{E}_{55}=0,\ \mathcal{E}_{55}*_{M}\mathcal{L}_{\mathcal{B}_{55}}=0,\\
&\mathcal{R}_{\mathcal{A}_{77}}*_{N}\mathcal{E}_{77}=0,\ \mathcal{E}_{77}*_{M}\mathcal{L}_{\mathcal{B}_{77}}=0,\
\mathcal{R}_{\mathcal{A}_{99}}*_{N}\mathcal{E}_{99}=0,\ \mathcal{E}_{99}*_{M}\mathcal{L}_{\mathcal{B}_{99}}=0,\\
&\mathcal{R}_{\mathcal{\widetilde{A}}}*_{N}\mathcal{\widetilde{E}}*_{M}\mathcal{L}_{\widetilde{B}}=0.
\end{align*}
\end{small}
Under these constraints, the general solution to \eqref{1.5} can be expressed as:
\begin{small}
\begin{align*}
&\mathcal{X}_{1}=\mathcal{A}_{6}^{\dagger}*_{N}\mathcal{\grave{E}}_{1}-\mathcal{\widehat{T}}_{21}*_{M}\mathcal{B}_{7}+\mathcal{L}_{\mathcal{A}_{6}}*_{N}\mathcal{T}_{41},\\
&\mathcal{X}_{2}=\mathcal{R}_{\mathcal{A}_{6}}*_{N}\mathcal{\grave{E}}_{1}*_{M}B_{7}+\mathcal{A}_{6}*_{N}\mathcal{\widehat{T}}_{21}
+\mathcal{T}_{31}*_{M}\mathcal{R}_{\mathcal{B}_{7}},\\
&\mathcal{Y}_{1}=\mathcal{A}_{8}^{\dagger}*_{N}\mathcal{\grave{E}}_{2}-\mathcal{\widehat{J}}_{21}*_{M}\mathcal{B}_{9}+\mathcal{L}_{\mathcal{A}_{8}}*_{N}\mathcal{J}_{41},\\
&\mathcal{Y}_{2}=\mathcal{R}_{\mathcal{A}_{8}}*_{N}\mathcal{\grave{E}}_{2}*_{M}B_{9}+\mathcal{A}_{8}*_{N}\mathcal{\widehat{J}}_{21}
+\mathcal{J}_{31}*_{M}\mathcal{R}_{\mathcal{B}_{9}},\\
&\mathcal{X}_{3}=\mathcal{A}_{1}^{\dagger}*_{N}\mathcal{E}_{1}*_{M}\mathcal{B}_{1}^{\dagger}
+\mathcal{L}_{\mathcal{A}_{1}}*_{N}\mathcal{U}_{1}+\mathcal{U}_{2}*_{M}\mathcal{R}_{\mathcal{B}_{1}},\\
&\mathcal{Y}_{3}=\mathcal{A}_{2}^{\dagger}*_{N}\mathcal{E}_{2}*_{M}\mathcal{B}_{2}^{\dagger}
+\mathcal{L}_{\mathcal{A}_{2}}*_{N}\mathcal{U}_{3}+\mathcal{U}_{4}*_{M}\mathcal{R}_{\mathcal{B}_{2}},\\
&\mathcal{W}=\mathcal{A}_{3}^{\dagger}*_{N}\mathcal{E}_{3}+\mathcal{L}_{\mathcal{A}_{3}}*_{N}\mathcal{E}_{4}*_{M}\mathcal{B}_{3}^{\dagger}
+\mathcal{L}_{\mathcal{A}_{3}}*_{N}\mathcal{U}_{5}*_{M}\mathcal{R}_{\mathcal{B}_{3}},\\
&\mathcal{\grave{E}}_{1}=\mathcal{E}_{9}-\mathcal{C}_{3}*_{N}\mathcal{X}_{3}*_{M}\mathcal{D}_{3}
-\mathcal{C}_{4}*_{N}\mathcal{W}*_{M}\mathcal{D}_{4},\\
&\mathcal{\grave{E}}_{2}=\mathcal{E}_{10}-\mathcal{H}_{3}*_{N}\mathcal{Y}_{3}*_{M}\mathcal{J}_{3}
-\mathcal{H}_{4}*_{N}\mathcal{W}*_{M}\mathcal{J}_{4},
\end{align*}
\end{small}
and $\mathcal{U}_{i}$ $(i=\overline{1,5})$ are defined by \eqref{system 2222eeee}-\eqref{system 2222iiii}
with $\mathcal{W}_{i}$, $\mathcal{K}_{i}$\, $\mathcal{H}_{jk}$ $(i,k=\overline{1,3},\ j=\overline{1,4}),$ $\mathcal{T}_{l1},\ \mathcal{\widehat{T}}_{21}$, $\mathcal{J}_{l1}$ and $\mathcal{\widehat{J}}_{21}$  $(l=3,4)$ are arbitrary  tensors with suitable orders.
\end{theorem}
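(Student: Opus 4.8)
The plan is to derive Theorem~\ref{system 3.33AAAAA} as the degenerate case of Theorem~\ref{system 3.33AAt} in which the pure input equations on $\mathcal{X}_{1},\mathcal{X}_{2},\mathcal{Y}_{1},\mathcal{Y}_{2}$ are suppressed and the two two-sided Sylvester blocks collapse to one-sided ones. Concretely, I would put $\mathcal{A}_{4}=\mathcal{B}_{4}=\mathcal{A}_{5}=\mathcal{B}_{5}=0$, $\mathcal{E}_{5}=\mathcal{E}_{6}=\mathcal{E}_{7}=\mathcal{E}_{8}=0$, and $\mathcal{A}_{7}=\mathcal{A}_{9}=\mathcal{I}$, $\mathcal{B}_{6}=\mathcal{B}_{8}=\mathcal{I}$ (with the dimensions adjusted) in \eqref{1.5}; a direct check shows the resulting system is exactly \eqref{1.6}. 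The first step is to record what these substitutions do to the auxiliary tensors of Theorem~\ref{system 3.33AAt}: since $\mathcal{L}_{\mathcal{A}_{4}}=\mathcal{R}_{\mathcal{A}_{4}}=\mathcal{I}$, $\mathcal{L}_{\mathcal{B}_{4}}=\mathcal{R}_{\mathcal{B}_{4}}=\mathcal{I}$, $\mathcal{A}_{4}^{\dagger}=\mathcal{B}_{4}^{\dagger}=0$, and $\mathcal{I}^{\dagger}=\mathcal{I}$, $\mathcal{L}_{\mathcal{I}}=\mathcal{R}_{\mathcal{I}}=0$, one gets $\mathcal{\widehat{A}}_{6}=\mathcal{A}_{6}$, $\mathcal{\widehat{B}}_{7}=\mathcal{B}_{7}$, $\mathcal{\widehat{E}}_{9}=\mathcal{E}_{9}$, $\mathcal{M}_{11}=\mathcal{R}_{\mathcal{A}_{6}}$, $\mathcal{N}_{11}=0$, $\mathcal{\widehat{A}}_{4}=\mathcal{R}_{\mathcal{A}_{6}}*_{N}\mathcal{C}_{3}$, $\mathcal{\widehat{B}}_{4}=\mathcal{D}_{3}*_{M}\mathcal{L}_{\mathcal{B}_{7}}$, $\mathcal{\widehat{P}}=\mathcal{R}_{\mathcal{A}_{6}}*_{N}\mathcal{E}_{9}*_{M}\mathcal{L}_{\mathcal{B}_{7}}$, together with the analogous identities for the $\mathcal{Y}$-block. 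These reproduce precisely the tensors displayed in the statement.

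A cleaner way to organize the write-up, which I would actually present, is to rerun the three-layer reduction from the proof of Theorem~\ref{system 3.33AAt} directly on \eqref{1.6}. First, regard the fifth equation $\mathcal{A}_{6}*_{N}\mathcal{X}_{1}+\mathcal{X}_{2}*_{M}\mathcal{B}_{7}+\mathcal{C}_{3}*_{N}\mathcal{X}_{3}*_{M}\mathcal{D}_{3}+\mathcal{C}_{4}*_{N}\mathcal{W}*_{M}\mathcal{D}_{4}=\mathcal{E}_{9}$ as the instance of Corollary~\ref{saaad1} (equivalently \eqref{3.1} with $\mathcal{A}_{2}=\mathcal{B}_{1}=\mathcal{I}$), and the sixth equation likewise. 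Applying Proposition~\ref{lma 2.3b} to each yields the first batch of solvability conditions together with the two reduced two-variable Sylvester equations $\mathcal{\widehat{A}}_{4}*_{N}\mathcal{X}_{3}*_{M}\mathcal{\widehat{B}}_{4}+\mathcal{\widehat{C}}_{4}*_{N}\mathcal{W}*_{M}\mathcal{\widehat{D}}_{4}=\mathcal{\widehat{P}}$ and $\mathcal{\widehat{A}}_{5}*_{N}\mathcal{Y}_{3}*_{M}\mathcal{\widehat{B}}_{5}+\mathcal{\widehat{C}}_{5}*_{N}\mathcal{W}*_{M}\mathcal{\widehat{D}}_{5}=\mathcal{\widehat{Q}}$, and expresses $\mathcal{X}_{1},\mathcal{X}_{2}$ (resp.\ $\mathcal{Y}_{1},\mathcal{Y}_{2}$) through $\mathcal{\grave{E}}_{1}$ (resp.\ $\mathcal{\grave{E}}_{2}$) and free tensors. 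The stated compact forms, e.g. $\mathcal{X}_{1}=\mathcal{A}_{6}^{\dagger}*_{N}\mathcal{\grave{E}}_{1}-\mathcal{\widehat{T}}_{21}*_{M}\mathcal{B}_{7}+\mathcal{L}_{\mathcal{A}_{6}}*_{N}\mathcal{T}_{41}$, fall out of Proposition~\ref{lma 2.3b}'s formulas after using $\mathcal{A}_{6}^{\dagger}*_{N}\mathcal{R}_{\mathcal{A}_{6}}=0$ and $\mathcal{A}_{6}^{\dagger}*_{N}\mathcal{L}_{\mathcal{R}_{\mathcal{A}_{6}}}=\mathcal{A}_{6}^{\dagger}$ (from the projection identities in Section~2) and after renaming the arbitrary tensors, e.g. $\mathcal{\widehat{T}}_{21}=\mathcal{A}_{6}^{\dagger}*_{N}\mathcal{U}_{2}$.

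Next I would note that \eqref{1.6} is solvable precisely when the equations $\mathcal{A}_{1}*_{N}\mathcal{X}_{3}*_{M}\mathcal{B}_{1}=\mathcal{E}_{1}$, $\mathcal{A}_{2}*_{N}\mathcal{Y}_{3}*_{M}\mathcal{B}_{2}=\mathcal{E}_{2}$, $\mathcal{A}_{3}*_{N}\mathcal{W}=\mathcal{E}_{3}$, $\mathcal{W}*_{M}\mathcal{B}_{3}=\mathcal{E}_{4}$, together with the two reduced Sylvester equations just produced, are simultaneously solvable in $\mathcal{X}_{3},\mathcal{Y}_{3},\mathcal{W}$; this is a system of exactly the shape \eqref{1.4} (with $\mathcal{Z}=\mathcal{W}$, $\mathcal{X}=\mathcal{X}_{3}$, $\mathcal{Y}=\mathcal{Y}_{3}$ and the coefficient tensors renamed as in the statement). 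Lemma~\ref{lma 2.3a} then supplies the remaining solvability conditions and the expressions for $\mathcal{X}_{3},\mathcal{Y}_{3},\mathcal{W}$ with the arbitrary tensors $\mathcal{U}_{i}$ given by \eqref{system 2222eeee}--\eqref{system 2222iiii}, and back-substitution into the Proposition~\ref{lma 2.3b} formulas completes $\mathcal{X}_{1},\mathcal{X}_{2},\mathcal{Y}_{1},\mathcal{Y}_{2}$. Finally I would check that the full list of conditions of Theorem~\ref{system 3.33AAt} collapses to the list stated here: the conditions in \eqref{system 222a} become $0=0$; those in \eqref{system 222b}, using $\mathcal{N}_{11}=0$ and $\mathcal{R}_{\mathcal{M}_{11}}*_{N}\mathcal{R}_{\mathcal{\widehat{A}}_{6}}=(\mathcal{I}-\mathcal{R}_{\mathcal{A}_{6}})*_{N}\mathcal{R}_{\mathcal{A}_{6}}=0$, are vacuous; similarly for \eqref{system 222c}; and the block from $\mathcal{R}_{\mathcal{A}_{3}}*_{N}\mathcal{E}_{3}=0$ through $\mathcal{R}_{\mathcal{\widetilde{A}}}*_{N}\mathcal{\widetilde{E}}*_{M}\mathcal{L}_{\widetilde{B}}=0$ carries over verbatim. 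The only genuine obstacle is organizational bookkeeping: confirming that each of the roughly two dozen auxiliary tensors and each solvability condition degenerates exactly as claimed, and that the conditions coming from the suppressed equations and from $\mathcal{N}_{11}=\mathcal{N}_{22}=0$ are either vacuous or subsumed by the remaining ones. No new idea is needed beyond the projection identities recalled in Section~2 and careful tracking of the substitutions.
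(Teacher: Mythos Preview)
Your proposal is correct and follows essentially the same approach as the paper: the paper's entire proof consists of the single sentence ``Apply Theorem~\ref{system 3.33AAt}, whenever $\mathcal{A}_{4}=\mathcal{A}_{5}=\mathcal{B}_{4}=\mathcal{B}_{5}=0$ and $\mathcal{B}_{6}=\mathcal{B}_{8}=\mathcal{A}_{7}=\mathcal{A}_{9}=\mathcal{I}$,'' which is exactly your specialization. Your write-up is in fact more thorough, as you verify explicitly how the auxiliary tensors degenerate and why the conditions \eqref{system 222a}--\eqref{system 222c} become vacuous, details the paper leaves implicit.
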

\begin{proof}
Apply $Theorem$ \ref{system 3.33AAt}, whenever $\mathcal{A}_{4}=\mathcal{A}_{5}=\mathcal{B}_{4}=\mathcal{B}_{5}=0$ and $\mathcal{B}_{6}=\mathcal{B}_{8}=\mathcal{A}_{7}=\mathcal{A}_{9}=\mathcal{I}.$
\end{proof}
On utilizing $Theorem$ \ref{system 3.33AAAAA}, we derive the solvability conditions and the general solution to  \eqref{1.7a}, where the quaternion tensors $\mathcal{X}_{3}$, $\mathcal{Y}_{3}$, and $\mathcal{W}$ are $\eta$-Hermitian.
\begin{theorem}\label{system 3.33AAAAa}
Consider the  system of tensor equations \eqref{1.7a}, where
\begin{small}
\begin{align*}
&\mathcal{A}_{1} \in \mathbb{H}^{I(N)\times J(N)},\ \mathcal{A}_{2} \in \mathbb{H}^{I(N)\times Q(N)},\ \mathcal{A}_{3} \in \mathbb{H}^{I(N)\times P(N)},
\ \mathcal{A}_{6} \in \mathbb{H}^{I(N)\times E(N)},\\
&\mathcal{A}_{8} \in \mathbb{H}^{I(N)\times R(N)},\
\mathcal{C}_{3} \in \mathbb{H}^{I(N)\times J(N)},\ \mathcal{C}_{4} \in \mathbb{H}^{I(N)\times P(N)},\
\mathcal{H}_{3} \in \mathbb{H}^{I(N)\times Q(N)}, \\
&\mathcal{H}_{4} \in \mathbb{H}^{I(N)\times P(N)},\
\mathcal{E}_{i}=\mathcal{E}_{i}^{\eta^{*}} \in \mathbb{H}^{I(N)\times I(N)},\
\mathcal{E}_{3} \in \mathbb{H}^{I(N)\times P(N)}\ \ \ (i \in \{1,2,9,10\})
\end{align*}
\end{small}
 are given tensors over $\mathbb{H}$. Set
\begin{small}
\begin{subequations}
\begin{align*}
\begin{gathered}
\mathcal{\widehat{A}}_{4}=\mathcal{R}_{\mathcal{A}_{6}}*_{N}\mathcal{C}_{3},\ \mathcal{\widehat{C}}_{4}=\mathcal{R}_{\mathcal{A}_{6}}*_{N}\mathcal{C}_{4},\
\mathcal{\widehat{P}}=\mathcal{R}_{\mathcal{A}_{6}}*_{N}\mathcal{E}_{9}*_{M}(\mathcal{R}_{\mathcal{A}_{6}})^{\eta^{*}},\\
\mathcal{\widehat{A}}_{5}=\mathcal{R}_{\mathcal{A}_{8}}*_{N}\mathcal{H}_{3},\  \mathcal{\widehat{C}}_{5}=\mathcal{R}_{\mathcal{A}_{8}}*_{N}\mathcal{H}_{4},\
\mathcal{\widehat{Q}}=\mathcal{R}_{\mathcal{A}_{8}}*_{N}\mathcal{E}_{10}*_{M}(\mathcal{R}_{\mathcal{A}_{8}})^{\eta^{*}},\\
\mathcal{C}_{6}=\mathcal{\widehat{C}}_{4}*_{N}\mathcal{L}_{\mathcal{A}_{3}},\
\mathcal{C}_{7}=\mathcal{\widehat{C}}_{5}*_{N}\mathcal{L}_{\mathcal{A}_{3}},\ \mathcal{G}=\mathcal{\widehat{P}}-
\mathcal{\widehat{C}}_{4}*_{N}\mathcal{A}_{3}^{\dagger}*_{N}\mathcal{E}_{3}*_{N}(\mathcal{\widehat{C}}_{4})^{\eta^{*}}\\
-\mathcal{\widehat{C}}_{4}*_{N}\mathcal{L}_{\mathcal{A}_{3}}*_{N}\mathcal{E}_{3}^{\eta^{*}}*_{N}(\mathcal{A}_{3}^{\dagger})^{\eta^{*}}
*_{N}(\mathcal{\widehat{C}}_{4})^{\eta^{*}},\ \mathcal{F}=\mathcal{\widehat{Q}}-\mathcal{\widehat{C}}_{5}*_{N}\mathcal{A}_{3}^{\dagger}*_{N}\mathcal{E}_{3}*_{N}\\
(\mathcal{\widehat{C}}_{5})^{\eta^{*}}
-\mathcal{\widehat{C}}_{5}*_{N}\mathcal{L}_{\mathcal{A}_{3}}*_{N}\mathcal{E}_{3}^{\eta^{*}}*_{N}(\mathcal{A}_{3}^{\dagger})^{\eta^{*}}
*_{N}(\mathcal{\widehat{C}}_{5})^{\eta^{*}}, \mathcal{M}_{1}=\mathcal{R}_{\mathcal{\widehat{A}}_{4}}*_{N}\mathcal{C}_{6},\\
\mathcal{S}_{1}=\mathcal{C}_{6}*_{N}\mathcal{L}_{\mathcal{M}_{1}},\ \mathcal{M}_{2}=\mathcal{R}_{\mathcal{\widehat{A}}_{5}}*_{N}\mathcal{C}_{7},\
\mathcal{S}_{2}=\mathcal{C}_{7}*_{N}\mathcal{L}_{\mathcal{M}_{2}},\\
\mathcal{A}_{11}=\begin{pmatrix}\mathcal{L}_{\mathcal{M}_{1}}*_{N}\mathcal{L}_{\mathcal{S}_{1}}& \mathcal{L}_{\mathcal{M}_{2}}*_{N}\mathcal{L}_{\mathcal{S}_{2}}\end{pmatrix},\
\mathcal{B}_{11}=\begin{pmatrix}\mathcal{R}_{\mathcal{C}_{6}^{\eta^{*}}}\\ \mathcal{R}_{\mathcal{C}_{7}^{\eta^{*}}}\end{pmatrix},\\
\begin{array}{l}
\mathcal{E}_{11}=\mathcal{N}_{2}^{\dagger}*_{N}\mathcal{F}*_{N}(\mathcal{C}_{7}^{\dagger})^{\eta^{*}}
+\mathcal{S}_{2}^{\dagger}*_{N}\mathcal{S}_{2}*_{N}\mathcal{C}_{7}^{\dagger}
*_{N}\mathcal{F}*_{N}(\mathcal{N}_{2}^{\dagger})^{\eta^{*}}
-\mathcal{N}_{1}^{\dagger}\\
\ \ \ \ \ \ \ *_{N}\mathcal{G}*_{N}(\mathcal{C}_{6}^{\dagger})^{\eta^{*}}-\mathcal{S}_{1}^{\dagger}*_{N}\mathcal{S}_{1}*_{N}\mathcal{C}_{6}^{\dagger}*_{N}\mathcal{G}*_{N}(\mathcal{N}_{1}^{\dagger})^{\eta^{*}},\ \mathcal{A}=\mathcal{R}_{\mathcal{A}_{11}}*_{N}
\end{array}\\
\mathcal{L}_{\mathcal{M}_{1}},\
\mathcal{B}=\mathcal{R}_{\mathcal{M}_{1}^{\eta^{*}}}*_{M}\mathcal{L}_{\mathcal{B}_{11}},\
\mathcal{C}=\mathcal{R}_{\mathcal{A}_{11}}*_{N}\mathcal{L}_{\mathcal{M}_{2}},\
\mathcal{D}=\mathcal{R}_{\mathcal{M}_{2}^{\eta^{*}}}*_{M}\mathcal{L}_{\mathcal{B}_{11}},\ \mathcal{E}\\
=\mathcal{R}_{\mathcal{A}_{11}}*_{N}\mathcal{E}_{11}*_{M}\mathcal{L}_{\mathcal{B}_{11}},\
\mathcal{M}=\mathcal{R}_{\mathcal{A}}*_{N}\mathcal{C},\
\mathcal{N}=\mathcal{D}*_{M}\mathcal{L}_{\mathcal{B}},\
\mathcal{S}=\mathcal{C}*_{N}\mathcal{L}_{\mathcal{M}},\\
\mathcal{A}_{22}=\begin{pmatrix}\mathcal{L}_{\mathcal{A}_{1}}& \mathcal{L}_{\mathcal{\widehat{A}}_{4}}\end{pmatrix},\
\mathcal{C}_{22}=\mathcal{\widehat{A}}_{4}^{\dagger}*_{N}\mathcal{C}_{6}*_{N}\mathcal{L}_{\mathcal{M}_{1}},\ \mathcal{A}_{33}=\mathcal{R}_{\mathcal{A}_{22}}*_{N}\mathcal{C}_{22},\\
\begin{array}{l}
\mathcal{E}_{22}=\mathcal{\widehat{A}}_{4}^{\dagger}*_{N}\mathcal{G}*_{M}(\mathcal{\widehat{A}}_{4}^{\dagger})^{\eta^{*}}
-\mathcal{A}_{1}^{\dagger}*_{N}\mathcal{E}_{1}*_{M}(\mathcal{A}_{1}^{\dagger})^{\eta^{*}}
-\mathcal{\widehat{A}}_{4}^{\dagger}*_{N}\mathcal{S}_{1}*_{N}\mathcal{C}_{6}^{\dagger}*_{N}\mathcal{G}
\\
\ \ \ \ \ \ *_{M}\mathcal{M}_{1}^{\eta^{*}}*_{M}\mathcal{C}_{6}^{\eta^{*}}*_{M}(\mathcal{\widehat{A}}_{4}^{\dagger})^{\eta^{*}}-\mathcal{\widehat{A}}_{4}^{\dagger}*_{N}\mathcal{C}_{6}*_{N}\mathcal{M}_{1}^{\dagger}
*_{N}\mathcal{G}*_{M}(\mathcal{\widehat{A}}_{4}^{\dagger})^{\eta^{*}},
\end{array}\\
\mathcal{E}_{33}=\mathcal{R}_{\mathcal{A}_{22}}*_{N}\mathcal{E}_{22}*_{N}(\mathcal{R}_{\mathcal{A}_{22}})^{\eta^{*}},\
\mathcal{A}_{44}=\begin{pmatrix}\mathcal{L}_{\mathcal{A}_{2}}& \mathcal{L}_{\mathcal{\widehat{A}}_{5}}\end{pmatrix},\
\mathcal{C}_{44}=\mathcal{\widehat{A}}_{5}^{\dagger}*_{N}\mathcal{C}_{7}\\
\begin{array}{l}
*_{N}\mathcal{L}_{\mathcal{M}_{2}},\mathcal{E}_{44}=\mathcal{\widehat{A}}_{5}^{\dagger}*_{N}\mathcal{F}*_{N}(\mathcal{\widehat{A}}_{5}^{\dagger})^{\eta^{*}}
-\mathcal{A}_{2}^{\dagger}*_{N}\mathcal{E}_{2}*_{N}(\mathcal{A}_{2}^{\dagger})^{\eta^{*}}
-\mathcal{\widehat{A}}_{5}^{\dagger}*_{N}\mathcal{S}_{2}*_{N}
\\
\mathcal{C}_{7}^{\dagger}*_{N}\mathcal{F}*_{N}\mathcal{M}_{2}^{\eta^{*}}*_{N}\mathcal{C}_{7}^{\eta^{*}}*_{N}(\mathcal{\widehat{A}}_{5}^{\dagger})^{\eta^{*}}-\mathcal{\widehat{A}}_{5}^{\dagger}*_{N}\mathcal{C}_{7}*_{N}\mathcal{M}_{2}^{\dagger}*_{N}\mathcal{F}
*_{M}(\mathcal{\widehat{A}}_{5}^{\dagger})^{\eta^{*}},
\end{array}\\
\mathcal{A}_{55}=\mathcal{R}_{\mathcal{A}_{44}}*_{N}\mathcal{C}_{44},\
\mathcal{E}_{55}=\mathcal{R}_{\mathcal{A}_{44}}*_{N}\mathcal{E}_{44}*_{M}(\mathcal{R}_{\mathcal{A}_{44}})^{\eta^{*}},\mathcal{A}_{66}=\begin{pmatrix}\mathcal{L}_{\mathcal{A}}& \mathcal{L}_{\mathcal{A}_{33}}\end{pmatrix},\\
\mathcal{B}_{66}=\begin{pmatrix}\mathcal{R}_{\mathcal{B}}\\ \mathcal{R}_{\mathcal{A}_{33}^{\eta^{*}}}\end{pmatrix},\
\mathcal{C}_{66}=\mathcal{A}^{\dagger}*_{N}\mathcal{S},\
\mathcal{D}_{66}=\mathcal{R}_{\mathcal{N}}*_{N}\mathcal{D}*_{N}\mathcal{B}^{\dagger},\\
\begin{array}{l}
\mathcal{E}_{66}=\mathcal{A}_{33}^{\dagger}*_{N}\mathcal{E}_{33}*_{N}(\mathcal{A}_{33}^{\dagger})^{\eta^{*}}
-\mathcal{A}^{\dagger}*_{N}\mathcal{E}*_{N}\mathcal{B}^{\dagger}
+\mathcal{A}^{\dagger}*_{N}\mathcal{S}*_{N}\mathcal{C}^{\dagger}*_{N}\mathcal{E}\\
\ \ \ \ \ \ *_{N}\mathcal{N}^{\dagger}*_{M}\mathcal{D}*_{N}\mathcal{B}^{\dagger}+\mathcal{A}^{\dagger}*_{N}\mathcal{C}*_{N}\mathcal{N}^{\dagger}*_{N}\mathcal{E}
*_{N}\mathcal{B}^{\dagger},
\end{array}\\
\mathcal{A}_{77}=\mathcal{R}_{\mathcal{A}_{66}}*_{N}\mathcal{C}_{66},\
\mathcal{B}_{77}=\mathcal{D}_{66}*_{M}\mathcal{L}_{\mathcal{B}_{66}},\
\mathcal{E}_{77}=\mathcal{R}_{\mathcal{A}_{66}}*_{N}\mathcal{E}_{66}*_{M}\mathcal{L}_{\mathcal{B}_{66}},\\
\mathcal{A}_{88}=\begin{pmatrix}\mathcal{L}_{\mathcal{M}}*_{N}\mathcal{L}_{\mathcal{S}}& \mathcal{L}_{\mathcal{A}_{55}}\end{pmatrix},\
\mathcal{B}_{88}=\begin{pmatrix}\mathcal{R}_{\mathcal{D}}\\ \mathcal{R}_{\mathcal{A}_{55}^{\eta^{*}}}\end{pmatrix},\
\mathcal{C}_{88}=\mathcal{L}_{\mathcal{M}},\
\mathcal{D}_{88}=\mathcal{R}_{\mathcal{N}},\\
\mathcal{E}_{88}=\mathcal{A}_{55}^{\dagger}*_{N}\mathcal{E}_{55}*_{N}(\mathcal{A}_{55}^{\dagger})^{\eta^{*}}
-\mathcal{M}^{\dagger}*_{N}\mathcal{E}*_{N}\mathcal{B}^{\dagger}
-\mathcal{S}^{\dagger}*_{N}\mathcal{S}*_{N}\mathcal{C}^{\dagger}*_{N}\mathcal{E}*_{M}\\
\mathcal{N}^{\dagger},\ \mathcal{A}_{99}=\mathcal{R}_{\mathcal{A}_{88}}*_{N}\mathcal{C}_{88},\
\mathcal{B}_{99}=\mathcal{D}_{88}*_{M}\mathcal{L}_{\mathcal{B}_{88}},
\end{gathered}
\end{align*}
\begin{align*}
\begin{gathered}
\mathcal{E}_{99}=\mathcal{R}_{\mathcal{A}_{88}}*_{N}\mathcal{E}_{88}*_{M}\mathcal{L}_{\mathcal{B}_{88}},
\ \mathcal{\widetilde{A}}=\begin{pmatrix}\mathcal{L}_{\mathcal{A}_{77}}& -\mathcal{L}_{\mathcal{A}_{99}}\end{pmatrix},\\
\mathcal{\widetilde{B}}=\begin{pmatrix}\mathcal{R}_{\mathcal{B}_{77}}\\ -\mathcal{R}_{\mathcal{B}_{99}}\end{pmatrix},\
\mathcal{\widetilde{E}}=\mathcal{A}_{77}^{\dagger}*_{N}\mathcal{E}_{77}*_{M}\mathcal{B}_{77}^{\dagger}
-\mathcal{A}_{99}^{\dagger}*_{N}\mathcal{E}_{99}*_{M}\mathcal{B}_{99}^{\dagger},\
\end{gathered}
\end{align*}
\end{subequations}
\end{small}
Then  \eqref{1.6} is solvable if and only if
\begin{small}
\begin{align*}
\begin{gathered}
\mathcal{R}_{\mathcal{A}_{3}}*_{N}\mathcal{E}_{3}=0,\
\mathcal{A}_{3}*_{N}\mathcal{E}_{3}=(\mathcal{A}_{3}*_{N}\mathcal{E}_{3})^{\eta^{*}},\
\mathcal{R}_{\mathcal{A}_{1}}*_{N}\mathcal{E}_{1}=0,\\
\mathcal{R}_{\mathcal{A}_{2}}*_{N}\mathcal{E}_{2}=0,\
\mathcal{R}_{\mathcal{\widehat{A}}_{4}}*_{N}\mathcal{G}*_{N}\mathcal{L}_{\mathcal{C}_{6}^{\eta^{*}}}=0,\
\mathcal{R}_{\mathcal{C}_{6}}*_{N}\mathcal{G}*_{M}\mathcal{L}_{\mathcal{\widehat{A}}_{4}^{\eta^{*}}}=0,\\
\mathcal{R}_{\mathcal{S}_{1}}*_{N}\mathcal{R}_{\mathcal{\widehat{A}}_{4}}*_{N}\mathcal{G}=0,\
\mathcal{G}*_{M}\mathcal{L}_{\mathcal{\widehat{A}}_{4}^{\eta^{*}}}*_{N}\mathcal{L}_{\mathcal{M}_{1}^{\eta^{*}}}=0,\\
\mathcal{R}_{\mathcal{\widehat{A}}_{5}}*_{N}\mathcal{F}*_{M}\mathcal{L}_{\mathcal{C}_{7}^{\eta^{*}}}=0,\ \mathcal{R}_{\mathcal{C}_{7}}*_{N}\mathcal{F}*_{N}\mathcal{L}_{\mathcal{\widehat{A}}_{5}^{\eta^{*}}}=0,\
\mathcal{R}_{\mathcal{S}_{2}}*_{N}\mathcal{R}_{\mathcal{\widehat{A}}_{5}}*_{N}\mathcal{F}=0,\\
\mathcal{F}*_{N}\mathcal{L}_{\mathcal{\widehat{A}}_{5}^{\eta^{*}}}*_{N}\mathcal{L}_{\mathcal{M}_{2}^{\eta^{*}}}=0,\
\mathcal{R}_{\mathcal{M}}*_{N}\mathcal{R}_{\mathcal{A}}*_{N}\mathcal{E}=0,\
\mathcal{R}_{\mathcal{A}}*_{N}\mathcal{E}*_{M}\mathcal{L}_{\mathcal{D}}=0,\\
\mathcal{E}*_{M}\mathcal{L}_{\mathcal{B}}*_{M}\mathcal{L}_{\mathcal{N}}=0,\
\mathcal{R}_{\mathcal{C}}*_{N}\mathcal{E}*_{M}\mathcal{L}_{\mathcal{B}}=0,\
\mathcal{R}_{\mathcal{A}_{33}}*_{N}\mathcal{E}_{33}=0,\\
\mathcal{E}_{33}*_{M}\mathcal{L}_{\mathcal{B}_{33}}=0,\
\mathcal{R}_{\mathcal{A}_{55}}*_{N}\mathcal{E}_{55}=0,\ \mathcal{E}_{55}*_{M}\mathcal{L}_{\mathcal{B}_{55}}=0,\
\mathcal{R}_{\mathcal{A}_{77}}*_{N}\mathcal{E}_{77}=0,\\
\mathcal{E}_{77}*_{M}\mathcal{L}_{\mathcal{B}_{77}}=0,\
\mathcal{R}_{\mathcal{A}_{99}}*_{N}\mathcal{E}_{99}=0,\ \mathcal{E}_{99}*_{M}\mathcal{L}_{\mathcal{B}_{99}}=0,\\
\mathcal{R}_{\mathcal{\widetilde{A}}}*_{N}\mathcal{\widetilde{E}}*_{M}\mathcal{L}_{\widetilde{B}}=0.
\end{gathered}
\end{align*}
\end{small}
Under these conditions, the general solution to \eqref{1.5} can be expressed as:
\begin{small}
\begin{align}
\label{1.6ssssk}
&(\mathcal{X}_{1},\mathcal{Y}_{1},\mathcal{X}_{3}, \mathcal{Y}_{3}, \mathcal{W})=\frac{1}{2}
\left(\mathcal{X}_{11}+\mathcal{X}_{12}^{\eta^{*}},
\mathcal{Y}_{11}+\mathcal{Y}_{12}^{\eta^{*}},
\mathcal{X}_{33}+\mathcal{X}_{33}^{\eta^{*}},
\mathcal{Y}_{33}+\mathcal{Y}_{33}^{\eta^{*}},
\mathcal{W}_{1}+\mathcal{W}_{1}^{\eta^{*}}\right)
\end{align}
\end{small}
where
\begin{small}
\begin{align*}
\begin{gathered}
\mathcal{X}_{11}=\mathcal{A}_{6}^{\dagger}*_{N}\mathcal{\grave{E}}_{1}-\mathcal{\widehat{T}}_{21}*_{M}\mathcal{A}_{6}^{\eta^{*}}+\mathcal{L}_{\mathcal{A}_{6}}*_{N}\mathcal{T}_{41},\\
\mathcal{X}_{12}=\mathcal{R}_{\mathcal{A}_{6}}*_{N}\mathcal{\grave{E}}_{1}*_{M}\mathcal{A}_{6}^{\eta^{*}}+\mathcal{A}_{6}*_{N}\mathcal{\widehat{T}}_{21}
+\mathcal{T}_{31}*_{M}\mathcal{R}_{\mathcal{A}_{6}^{\eta^{*}}},\\
\mathcal{Y}_{11}=\mathcal{A}_{8}^{\dagger}*_{N}\mathcal{\grave{E}}_{2}-\mathcal{\widehat{J}}_{21}*_{M}\mathcal{A}_{8}^{\eta^{*}}
+\mathcal{L}_{\mathcal{A}_{8}}*_{N}\mathcal{J}_{41},\\
\mathcal{Y}_{12}=\mathcal{R}_{\mathcal{A}_{8}}*_{N}\mathcal{\grave{E}}_{2}*_{M}\mathcal{A}_{8}^{\eta^{*}}+\mathcal{A}_{8}*_{N}\mathcal{\widehat{J}}_{21}
+\mathcal{J}_{31}*_{M}\mathcal{R}_{\mathcal{A}_{8}^{\eta^{*}}},\\
\mathcal{X}_{33}=\mathcal{A}_{1}^{\dagger}*_{N}\mathcal{E}_{1}*_{M}(\mathcal{A}_{1}^{\dagger})^{\eta^{*}}
+\mathcal{L}_{\mathcal{A}_{1}}*_{N}\mathcal{U}_{1}+\mathcal{U}_{2}*_{M}\mathcal{R}_{\mathcal{A}_{1}^{\eta^{*}}},\\
\mathcal{Y}_{33}=\mathcal{A}_{2}^{\dagger}*_{N}\mathcal{E}_{2}*_{M}(\mathcal{A}_{2}^{\dagger})^{\eta^{*}}
+\mathcal{L}_{\mathcal{A}_{2}}*_{N}\mathcal{U}_{3}+\mathcal{U}_{4}*_{M}\mathcal{R}_{\mathcal{A}_{2}^{\eta^{*}}},\\
\mathcal{W}_{1}=\mathcal{A}_{3}^{\dagger}*_{N}\mathcal{E}_{3}+\mathcal{L}_{\mathcal{A}_{3}}*_{N}\mathcal{E}_{3}^{\eta^{*}}*_{N}(\mathcal{A}_{3}^{\dagger})^{\eta^{*}}
+\mathcal{L}_{\mathcal{A}_{3}}*_{N}\mathcal{U}_{5}*_{M}\mathcal{R}_{\mathcal{A}_{3}^{\eta^{*}}},\\
\mathcal{\grave{E}}_{1}=\mathcal{E}_{9}-\mathcal{C}_{3}*_{N}\mathcal{X}_{3}*_{M}\mathcal{C}_{3}^{\eta^{*}}
-\mathcal{C}_{4}*_{N}\mathcal{W}*_{M}\mathcal{C}_{4}^{\eta^{*}},\\
\mathcal{\grave{E}}_{2}=\mathcal{E}_{10}-\mathcal{H}_{3}*_{N}\mathcal{Y}_{3}*_{M}\mathcal{H}_{3}^{\eta^{*}}
-\mathcal{H}_{4}*_{N}\mathcal{W}*_{M}\mathcal{H}_{4}^{\eta^{*}},\\
\begin{array}{l}
 \mathcal{U}_{1}=\begin{pmatrix}\mathcal{I}\ \  0\end{pmatrix}*_{N}(\mathcal{A}_{22}^{\dagger}*_{N}
(\mathcal{E}_{22}-\mathcal{C}_{22}*_{N}\mathcal{V}_{2}*_{N}\mathcal{C}_{22}^{\eta^{*}})
-\mathcal{A}_{22}^{\dagger}*_{N}\mathcal{H}_{12}*_{N}\mathcal{A}_{22}^{\eta^{*}},\\
 \ \ \ \ \ \ +\mathcal{L}_{\mathcal{A}_{22}}*_{N}\mathcal{H}_{11}),
 \end{array}\\
\begin{array}{l}
 \mathcal{U}_{2}=(\mathcal{R}_{\mathcal{A}_{22}}*_{N}(\mathcal{E}_{22}-\mathcal{C}_{22}*_{N}\mathcal{V}_{2}*_{M}\mathcal{C}_{22}^{\eta^{*}})
*_{N}(\mathcal{A}_{22}^{\dagger})^{\eta^{*}}+\mathcal{A}_{22}*_{N}\mathcal{A}_{22}^{\dagger}\\
\ \ \ \ \ \  *_{N}\mathcal{H}_{12}\mathcal{H}_{13}*_{M}\mathcal{R}_{\mathcal{A}_{22}^{\eta^{*}}})*_{M}\begin{pmatrix}\mathcal{I}\\ 0\end{pmatrix},
 \end{array}\\
 \begin{array}{l}
 \mathcal{U}_{3}=\begin{pmatrix}\mathcal{I}& 0\end{pmatrix}*_{N}(\mathcal{A}_{44}^{\dagger}*_{N}
(\mathcal{E}_{44}-\mathcal{C}_{44}*_{N}\mathcal{T}_{2}*_{N}\mathcal{C}_{44}^{\eta^{*}})
-\mathcal{A}_{44}^{\dagger}*_{N}\mathcal{H}_{22}*_{N}\mathcal{A}_{44}^{\eta^{*}}\\
\ \ \ \ \ \ +\mathcal{L}_{\mathcal{A}_{44}}*_{N}\mathcal{H}_{21}),
 \end{array}
 \end{gathered}
 \end{align*}
\begin{align*}
&\begin{array}{l}
 \mathcal{U}_{4}=(\mathcal{R}_{\mathcal{A}_{44}}*_{N}(\mathcal{E}_{44}-\mathcal{C}_{44}*_{N}\mathcal{T}_{2}*_{M}\mathcal{C}_{44}^{\eta^{*}})
*_{N}(\mathcal{A}_{44}^{\dagger})^{\eta^{*}}+\mathcal{A}_{44}*_{N}\mathcal{A}_{44}^{\dagger}*_{N}\\
\ \ \ \ \ \  \mathcal{H}_{22}\mathcal{H}_{23}*_{M}\mathcal{R}_{\mathcal{A}_{44}}^{\eta^{*}})*_{M}\begin{pmatrix}\mathcal{I}\\ 0\end{pmatrix},
 \end{array}\\
&\begin{array}{l}
 \mathcal{U}_{5}=\mathcal{M}_{1}^{\dagger}*_{N}\mathcal{G}*_{N}(\mathcal{C}_{6}^{\dagger})^{\eta^{*}}+\mathcal{S}_{1}^{\dagger}*_{N}\mathcal{S}_{1}
 *_{N}\mathcal{C}_{6}^{\dagger}
*_{N}\mathcal{G}*_{N}(\mathcal{M}_{1}^{\dagger})^{\eta^{*}}-\mathcal{L}_{\mathcal{M}_{1}}*_{N}\\
\ \ \ \ \ \ \mathcal{L}_{\mathcal{S}_{1}}*_{N}\mathcal{V}_{1}+\mathcal{L}_{\mathcal{M}_{1}^{\eta^{*}}}*_{N}\mathcal{V}_{2}*_{M}\mathcal{R}_{\mathcal{N}_{1}}
+\mathcal{V}_{3}*_{M}\mathcal{R}_{\mathcal{C}_{6}^{\eta^{*}}},
 \end{array}\\
&\begin{array}{l}
 \mathcal{V}_{1}=\begin{pmatrix}\mathcal{I}& 0\end{pmatrix}*_{N}(\mathcal{A}_{11}^{\dagger}*_{N}
(\mathcal{E}_{11}-\mathcal{L}_{\mathcal{M}_{1}}*_{N}\mathcal{V}_{2}*_{N}\mathcal{R}_{\mathcal{M}_{1}^{\eta^{*}}}-
\mathcal{L}_{\mathcal{M}_{2}}*_{N}\mathcal{T}_{2}*_{N}\\
\ \ \ \ \ \ \mathcal{R}_{\mathcal{M}_{2}^{\eta^{*}}})+\mathcal{W}_{1}*_{N}\mathcal{B}_{11}
+\mathcal{L}_{\mathcal{A}_{11}}*_{N}\mathcal{W}_{2}),
 \end{array}
 \end{align*}
\end{small}
and $\mathcal{V}_{2},$ $\mathcal{V}_{3},$ $\mathcal{T}_{2}$, $\mathcal{W}_{m}$ $(m=\overline{4,8}),$ $\mathcal{Q}_{1}$ and $\mathcal{Q}_{2}$ can be deduced from \eqref{system 22k}-\eqref{system 22t}
with $\mathcal{W}_{i}$, $\mathcal{K}_{i}$\, $\mathcal{H}_{jk}$ $(i,k=\overline{1,3},\ j=\overline{1,4}),$ $\mathcal{T}_{l1},\ \mathcal{\widehat{T}}_{21}$, $\mathcal{J}_{l1}$ and $\mathcal{\widehat{J}}_{21}$  $(l=3,4)$ are arbitrary.
\end{theorem}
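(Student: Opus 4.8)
The plan is to obtain Theorem~\ref{system 3.33AAAAa} from Theorem~\ref{system 3.33AAAAA} by specializing the coefficient tensors and then applying a symmetrization (averaging) argument, the standard device for passing from a two-sided Sylvester system to its $\eta$-Hermitian restriction.

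\textbf{Step 1.} First I would realize \eqref{1.7a} as a constrained instance of \eqref{1.6}: it is \eqref{1.6} under the substitutions $\mathcal{B}_1=\mathcal{A}_1^{\eta^*}$, $\mathcal{B}_2=\mathcal{A}_2^{\eta^*}$, $\mathcal{B}_3=\mathcal{A}_3^{\eta^*}$, $\mathcal{B}_7=\mathcal{A}_6^{\eta^*}$, $\mathcal{B}_9=\mathcal{A}_8^{\eta^*}$, $\mathcal{D}_3=\mathcal{C}_3^{\eta^*}$, $\mathcal{D}_4=\mathcal{C}_4^{\eta^*}$, $\mathcal{J}_3=\mathcal{H}_3^{\eta^*}$, $\mathcal{J}_4=\mathcal{H}_4^{\eta^*}$, $\mathcal{E}_4=\mathcal{E}_3^{\eta^*}$, with the unknowns coupled by $\mathcal{X}_2=\mathcal{X}_1^{\eta^*}$ and $\mathcal{Y}_2=\mathcal{Y}_1^{\eta^*}$ and with $\mathcal{X}_3,\mathcal{Y}_3,\mathcal{W}$ required $\eta$-Hermitian. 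Here I use the rule $(\mathcal{A}*_N\mathcal{X})^{\eta^*}=\mathcal{X}^{\eta^*}*_M\mathcal{A}^{\eta^*}$, and the observation that $\mathcal{A}_3*_N\mathcal{W}=\mathcal{E}_3$ together with $\mathcal{W}=\mathcal{W}^{\eta^*}$ forces $\mathcal{W}*_M\mathcal{A}_3^{\eta^*}=\mathcal{E}_3^{\eta^*}$, so the lone equation $\mathcal{A}_3*_N\mathcal{W}=\mathcal{E}_3$ carries the content of the pair $\mathcal{A}_3*_N\mathcal{W}=\mathcal{E}_3$, $\mathcal{W}*_M\mathcal{B}_3=\mathcal{E}_4$ of \eqref{1.6}. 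I will call \eqref{1.6} under these coefficient substitutions, but with $\mathcal{X}_2,\mathcal{Y}_2$ unrestricted and no Hermitian constraint, the \emph{relaxed system}.

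\textbf{Step 2.} Next I would feed the relaxed system into Theorem~\ref{system 3.33AAAAA} and use the $\eta$-conjugate identities recalled in the Preliminaries --- $(\mathcal{L}_{\mathcal{D}})^{\eta^*}=\mathcal{R}_{\mathcal{D}^{\eta^*}}$, $(\mathcal{R}_{\mathcal{D}})^{\eta^*}=\mathcal{L}_{\mathcal{D}^{\eta^*}}$, $(\mathcal{D}^{\eta^*})^{\dagger}=(\mathcal{D}^{\dagger})^{\eta^*}$ and $(\mathcal{A}*_N\mathcal{B})^{\eta^*}=\mathcal{B}^{\eta^*}*_N\mathcal{A}^{\eta^*}$ --- to check that every auxiliary tensor of Theorem~\ref{system 3.33AAAAA} specializes to the $\eta^*$-decorated tensor of the same name in Theorem~\ref{system 3.33AAAAa}; for example $\mathcal{\widehat{B}}_4=\mathcal{D}_3*_M\mathcal{L}_{\mathcal{B}_7}$ becomes $\mathcal{C}_3^{\eta^*}*_M(\mathcal{R}_{\mathcal{A}_6})^{\eta^*}=(\mathcal{R}_{\mathcal{A}_6}*_N\mathcal{C}_3)^{\eta^*}=\mathcal{\widehat{A}}_4^{\eta^*}$, the block $\mathcal{B}_{22}$ becomes $\mathcal{A}_{22}^{\eta^*}$, the tensors $\mathcal{N}_1,\mathcal{N}_2$ become $\mathcal{M}_1^{\eta^*},\mathcal{M}_2^{\eta^*}$, $\mathcal{D}$ becomes $\mathcal{C}^{\eta^*}$, and so on through $\mathcal{A}_{33},\mathcal{B}_{33},\mathcal{A}_{55},\ldots,\mathcal{A}_{99},\mathcal{B}_{99}$. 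Since $\mathcal{E}_i=\mathcal{E}_i^{\eta^*}$ for $i\in\{1,2,9,10\}$ and $\mathcal{E}_4=\mathcal{E}_3^{\eta^*}$, each solvability condition of Theorem~\ref{system 3.33AAAAA} then appears together with its $\eta^*$-conjugate, and one of the two is redundant (for instance $\mathcal{E}_1*_M\mathcal{L}_{\mathcal{B}_1}=0$ is the $\eta^*$-conjugate of $\mathcal{R}_{\mathcal{A}_1}*_N\mathcal{E}_1=0$, while $\mathcal{A}_3*_N\mathcal{E}_3=\mathcal{E}_4*_M\mathcal{B}_3$ becomes the self-conjugacy $\mathcal{A}_3*_N\mathcal{E}_3=(\mathcal{A}_3*_N\mathcal{E}_3)^{\eta^*}$); collecting the surviving conditions produces exactly the list of Theorem~\ref{system 3.33AAAAa}. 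In the same way, the general solution of the relaxed system delivered by Theorem~\ref{system 3.33AAAAA} is precisely the family $(\mathcal{X}_{11},\mathcal{X}_{12},\mathcal{Y}_{11},\mathcal{Y}_{12},\mathcal{X}_{33},\mathcal{Y}_{33},\mathcal{W}_1)$ written out in the statement.

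\textbf{Step 3.} The decisive point is the averaging. If $(\mathcal{X}_1,\mathcal{X}_2,\mathcal{Y}_1,\mathcal{Y}_2,\mathcal{X}_3,\mathcal{Y}_3,\mathcal{W})$ solves the relaxed system, then applying $\eta^*$ to each equation and using $\mathcal{E}_i=\mathcal{E}_i^{\eta^*}$ and the product rule for $\eta^*$ shows that $(\mathcal{X}_2^{\eta^*},\mathcal{X}_1^{\eta^*},\mathcal{Y}_2^{\eta^*},\mathcal{Y}_1^{\eta^*},\mathcal{X}_3^{\eta^*},\mathcal{Y}_3^{\eta^*},\mathcal{W}^{\eta^*})$ is again a solution; by linearity the half-sum is a solution, its $\mathcal{X}_3,\mathcal{Y}_3,\mathcal{W}$ components are $\eta$-Hermitian, and its leading terms collapse to $\mathcal{A}_6*_N\mathcal{X}_1+(\mathcal{A}_6*_N\mathcal{X}_1)^{\eta^*}$ and $\mathcal{A}_8*_N\mathcal{Y}_1+(\mathcal{A}_8*_N\mathcal{Y}_1)^{\eta^*}$, so the half-sum solves \eqref{1.7a}; this yields formula \eqref{1.6ssssk}. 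Conversely, an $\eta$-Hermitian solution of \eqref{1.7a}, read through $\mathcal{X}_2:=\mathcal{X}_1^{\eta^*}$ and $\mathcal{Y}_2:=\mathcal{Y}_1^{\eta^*}$, solves the relaxed system, hence occurs in the family of Step~2 for some choice of the free parameters and is fixed by the symmetrization; thus \eqref{1.6ssssk} with arbitrary parameters exhausts the $\eta$-Hermitian solutions, and the listed conditions are also necessary. The laborious part of all this is the bookkeeping in Step~2 --- verifying term by term that each of $\mathcal{A}_{33},\mathcal{B}_{33},\mathcal{E}_{11},\mathcal{E}_{22},\mathcal{E}_{44},\mathcal{E}_{66},\mathcal{E}_{88},\ldots$ specializes to its $\eta^*$-form and that every $\eta^*$-conjugate pair of solvability conditions is genuinely redundant --- whereas Step~3 is short once the symmetry hypotheses $\mathcal{E}_i=\mathcal{E}_i^{\eta^*}$ are in force, the only subtlety there being that averaging neither enlarges nor shrinks the parameter family, which is the fixed-point remark just made.
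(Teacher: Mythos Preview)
Your proposal is correct and follows essentially the same approach as the paper: introduce the relaxed (unconstrained) system, apply Theorem~\ref{system 3.33AAAAA} to it, and then use the $\eta^*$-symmetrization/averaging to pass between the relaxed system and \eqref{1.7a}. The paper is terser about the Step~2 bookkeeping (it simply invokes Theorem~\ref{system 3.33AAAAA} on the relaxed system without spelling out the $\eta^*$-specializations of the auxiliary tensors or the collapse of redundant conditions), but the structure and the key averaging argument are the same.
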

\begin{proof} Consider the following  system of tensor equations:
\begin{small}
\begin{align}
\label{1.7aa}
\left\{
\begin{array}{rll}
\begin{gathered}
\mathcal{A}_{3}*_{N}\mathcal{W}_{1}=\mathcal{E}_{3},\ \mathcal{W}_{1}*_{N}\mathcal{A}_{3}^{\eta^{*}}=\mathcal{E}_{3}^{\eta^{*}},\\
\mathcal{A}_{1}*_{N}\mathcal{X}_{33}*_{N}\mathcal{A}_{1}^{\eta^{*}}=\mathcal{E}_{1},\
\mathcal{A}_{2}*_{N}\mathcal{Y}_{33}*_{N}\mathcal{A}_{2}^{\eta^{*}}=\mathcal{E}_{2},\\
\mathcal{A}_{6}*_{N}\mathcal{X}_{11}+\mathcal{X}_{12}*_{N}\mathcal{A}_{6}^{\eta^{*}}
+\mathcal{C}_{3}*_{N}\mathcal{X}_{33}*_{N}\mathcal{C}_{3}^{\eta^{*}}
+\mathcal{C}_{4}*_{N}\mathcal{W}_{1}*_{N}\mathcal{C}_{4}^{\eta^{*}}=\mathcal{E}_{9},\\
\mathcal{A}_{8}*_{N}\mathcal{Y}_{11}+\mathcal{Y}_{12}*_{N}\mathcal{A}_{8}^{\eta^{*}}
+\mathcal{H}_{3}*_{N}\mathcal{Y}_{3}*_{N}\mathcal{H}_{3}^{\eta^{*}}
+\mathcal{H}_{4}*_{N}\mathcal{W}_{1}*_{N}\mathcal{H}_{4}^{\eta^{*}}=\mathcal{E}_{10}.
\end{gathered}
\end{array}
  \right.
\end{align}
\end{small}
First, we show that  \eqref{1.7a} is solvable if and only if  \eqref{1.7aa} is solvable. Claim that $(\mathcal{X}_{1}, \mathcal{Y}_{1}, \mathcal{X}_{3}, \mathcal{Y}_{3}, \mathcal{W})$ is a solution to  \eqref{1.7a}, then it is evident that  $(\mathcal{X}_{11}, \mathcal{X}_{12}, \mathcal{Y}_{11}, \mathcal{Y}_{12}, \mathcal{X}_{33}, \mathcal{Y}_{33}, \mathcal{W}_{1})$ $=(\mathcal{X}_{1}, \mathcal{X}_{1}^{\eta^{*}}, \mathcal{Y}_{1}, \mathcal{Y}_{1}^{\eta^{*}},\\ \mathcal{X}_{3}, \mathcal{Y}_{3}, \mathcal{W})$ is a solution to  \eqref{1.7aa}. Conversely, if $(\mathcal{X}_{11}, \mathcal{X}_{12}, \mathcal{Y}_{11}, \mathcal{Y}_{12}, \mathcal{X}_{33}, \mathcal{Y}_{33}, \mathcal{W}_{1})$ is a solution to \eqref{1.7aa}. Now, we show that the formula  \eqref{1.6ssssk}
can be a solution to system \eqref{1.7a}. It is known that  $\mathcal{X}_{3},\mathcal{Y}_{3},$ and $\mathcal{W}$ are  $\eta$-Hermitian tensors. By Applying \eqref{1.6ssssk} on  \eqref{1.7a} yields:
\begin{small}
\begin{align*}
&\mathcal{A}_{1}*_{N}\mathcal{X}_{3}*_{N}\mathcal{A}_{1}^{\eta^{*}}
=\mathcal{A}_{1}*_{N}\left(\frac{\mathcal{X}_{33}+\mathcal{X}_{33}^{\eta^{*}}}{2}\right)*_{N}A_{1}^{\eta^{*}}\\
&=\frac{1}{2}\mathcal{A}_{1}*_{N}\mathcal{X}_{33}*_{N}\mathcal{A}_{1}^{\eta^{*}}
+\frac{1}{2}\left(\mathcal{A}_{1}*_{N}\mathcal{X}_{33}*_{N}\mathcal{A}_{1}^{\eta^{*}}\right)^{\eta^{*}}
=\mathcal{E}_{1}.
\end{align*}
\end{small}
Similarly, it can be verified that
\begin{small}
\begin{align*}
&\mathcal{A}_{2}*_{N}\mathcal{Y}_{3}*_{N}\mathcal{A}_{2}^{\eta^{*}}=\mathcal{E}_{2},\\
&\mathcal{A}_{3}*_{N}\mathcal{W}
=\mathcal{A}_{3}*_{N}\left(\frac{\mathcal{W}_{1}+\mathcal{W}_{1}^{\eta^{*}}}{2}\right)
=\frac{1}{2}\mathcal{A}_{3}*_{N}\mathcal{W}_{1}
+\frac{1}{2}\left(\mathcal{W}_{1}*_{N}\mathcal{A}_{3}^{\eta^{*}}\right)^{\eta^{*}}
=\mathcal{E}_{3},\\
&\mathcal{A}_{6}*_{N}\mathcal{X}_{1}+(\mathcal{A}_{6}*_{N}\mathcal{X}_{1})^{\eta^{*}}
+\mathcal{C}_{3}*_{N}\mathcal{X}_{3}*_{N}\mathcal{C}_{3}^{\eta^{*}}
+\mathcal{C}_{4}*_{N}\mathcal{W}*_{N}\mathcal{C}_{4}^{\eta^{*}}\\
&=\mathcal{A}_{6}*_{N}\left(\frac{\mathcal{X}_{11}+\mathcal{X}_{12}^{\eta^{*}}}{2}\right)
+\left(\mathcal{A}_{6}*_{N}\left(\frac{\mathcal{X}_{11}+\mathcal{X}_{12}^{\eta^{*}}}{2}\right)\right)^{\eta^{*}}\\
&\mathcal{C}_{3}*_{N}\left(\frac{\mathcal{X}_{33}+\mathcal{X}_{33}^{\eta^{*}}}{2}\right)*_{N}\mathcal{C}_{3}^{\eta^{*}}
+\mathcal{C}_{4}*_{N}\left(\frac{\mathcal{W}_{1}+\mathcal{W}_{1}^{\eta^{*}}}{2}\right)*_{N}\mathcal{C}_{4}^{\eta^{*}}\\
&=\frac{1}{2}\left[ \mathcal{A}_{6}*_{N}\mathcal{X}_{11}+\mathcal{X}_{12}*_{N}\mathcal{A}_{6}^{\eta^{*}}
+\mathcal{C}_{3}*_{N}\mathcal{X}_{33}*_{N}\mathcal{C}_{3}^{\eta^{*}}
+\mathcal{C}_{4}*_{N}\mathcal{W}_{1}*_{N}\mathcal{C}_{4}^{\eta^{*}}\right]\\
&+\frac{1}{2}\left[ \mathcal{A}_{6}*_{N}\mathcal{X}_{11}+\mathcal{X}_{12}*_{N}\mathcal{A}_{6}^{\eta^{*}}
+\mathcal{C}_{3}*_{N}\mathcal{X}_{33}*_{N}\mathcal{C}_{3}^{\eta^{*}}
+\mathcal{C}_{4}*_{N}\mathcal{W}_{1}*_{N}\mathcal{C}_{4}^{\eta^{*}}\right]^{\eta^{*}}=\mathcal{E}_{9}.
\end{align*}
\end{small}
Similarly, it can be found that
\begin{small}
\begin{align*}
\mathcal{A}_{8}*_{N}\mathcal{Y}_{1}+(\mathcal{A}_{8}*_{N}\mathcal{Y}_{1})^{\eta^{*}}
+\mathcal{H}_{3}*_{N}\mathcal{Y}_{3}*_{M}\mathcal{H}_{3}^{\eta^{*}}
+\mathcal{H}_{4}*_{N}\mathcal{W}*_{M}\mathcal{H}_{4}^{\eta^{*}}=\mathcal{E}_{10}.
 \end{align*}
\end{small}
 Therefore, \eqref{1.6ssssk} is a solution to  \eqref{1.7a}. Consequently, apply $Theorem$ $\ref{system 3.33AAAAA}$ on  \eqref{1.7aa}, we can establish the solvability conditions and the general solution to  \eqref{1.7a}.
\end{proof}

\section{\textbf{Conclusion}}
We derive the necessary and sufficient algebriac conditions for the existence of a solution to  \eqref{1.5}  in $Theorem$ \ref{system 3.33AAt}. We established an explicit formula of the general solution in terms of the Moore-Penrose inverses of some block-given tensors. An algorithm with a numerical example is investigated to compute the general solution to  \eqref{1.5}. As a particular case of  \eqref{1.5}, we discuss the solvability conditions and the general solution to \eqref{1.6} in $Theorem$ \ref{system 3.33AAAAA}. As an implementation of $Theorem$ \ref{system 3.33AAAAA}, we carry out the solvability conditions and an expression of the general solution to  \eqref{1.7a}, whenever  $\mathcal{X}_{3},$  $\mathcal{Y}_{3}$ and $\mathcal{W}$ are $\eta$-Hermitian tensors. All results are valid over an arbitrary division ring.

As a consequence the main findings in Section 3, we infer the solvability constraints of the two-sided linear matrix equation $A_{1}X_{1}B_{1}+A_{2}X_{2}B_{2}+A_{2}(C_{3}X_{3}D_{3}+C_{4}WD_{4})B_{1}$
    $=E_{1}$ can be characterized by Moore-Penrose inverses of some provided matrices and rank equalities. Consequently, we can obtain the solvability constraints and the general solution to the following system of two-sided and coupled matrix equations:
\begin{small}
\begin{align*}
\left\{
\begin{gathered}
A_{6}X_{1}B_{6}+A_{7}X_{2}B_{7}+A_{7}(C_{3}X_{3}D_{3}+C_{4}WD_{4})B_{6}=E_{9},\\
A_{8}Y_{1}B_{8}+A_{9}Y_{2}B_{9}+A_{9}(H_{3}Y_{3}J_{3}+H_{4}WD_{4})J_{8}=E_{10}.
\end{gathered}
\right.
\end{align*}
\end{small}
with respect to
\begin{small}
\begin{align*}
\left\{
\begin{gathered}
A_{1}X_{3}B_{1}=E_{1},\ A_{2}Y_{3}B_{2}=E_{2},\\
A_{4}X_{1}=E_{5},\ X_{2}B_{4}=E_{6},\\
A_{5}Y_{1}=E_{7},\ Y_{2}B_{5}=E_{8},\\
A_{3}W=E_{3},\ WB_{3}=E_{4}.
\end{gathered}
\right.
\end{align*}
\end{small}

\end{document}